\newtheorem{theorem}{Theorem}[section]
\newtheorem{lemma}[theorem]{Lemma}
\newtheorem{proposition}[theorem]{Proposition}
\newtheorem*{proposition*}{Proposition}
\newtheorem*{theorem*}{Theorem}
\newtheorem{corollary}[theorem]{Corollary}
\newtheorem{conjecture}[theorem]{Conjecture}
\title{Redundancy of  triangle groups in spherical CR representations}
\author{Raphaël V.~{\sc Alexandre}\footnote{Institut de Math\'ematiques de Jussieu-Paris Rive Gauche, Sorbonne Université, 4 place Jussieu, 75252 Paris Cédex, France.
ACG,
OURAGAN (IMJ-PRG, INRIA Paris, Sorbonne Université, Université de Paris, CNRS).
Email address: {\tt raphael.alexandre@math.cnrs.fr}.}}
\newcommand\textthmrest{
Consider the following table.
Each row consists of manifolds having a boundary  unipotent representation with   the group  $\Delta_2(3,3,n;\theta_\infty)$ denoted in the first column for image (up to conjugation in $\PU(2,1)$ and complex conjugation). 

The manifolds in the column “Uniformization” are (spherical CR) uniformized by their representation and the others in the column “Redundant” are not.
}
\newcommand\experimentaltable{
\begin{center}
\begin{tabular}{|c|c|c|}
\hline
Group & Uniformization & Redundant
\\
\hline
$\Delta_2(3,3,4;\theta_\infty)$
& m004
& m022, m029, m034, m053, m081, m117
\\ 
$\Delta_2(3,3,5;\theta_\infty)$
& m009
& m015, m035, m142, m146
\\
$\Delta_2(3,3,6;\theta_\infty)$
& m023
& 
\\
$\Delta_2(3,3,7;\theta_\infty)$
& m039
& m032, m045
\\
$\Delta_2(3,3,8;\theta_\infty)$
& s000
& m053
\\
$\Delta_2(3,3,\infty;\theta_\infty)$
& m129 & m203
\\
\hline
\end{tabular}
\end{center}
}
\newcommand\Z{\mathbf{Z}}
\newcommand\R{\mathbf{R}}
\newcommand\C{\mathbf{C}}
\DeclareMathOperator{\SU}{SU}
\DeclareMathOperator{\PGL}{PGL}
\DeclareMathOperator{\PU}{PU}
\DeclareMathOperator{\rU}{U}
\DeclareMathOperator{\tr}{tr}
\renewcommand{\Re}{{\rm Re}}
\newcommand{\ii}{\boldsymbol{i}}
\begin{document}

\begin{abstract}
Falbel, Koseleff and Rouillier computed a large number of boundary unipotent CR representations of fundamental groups of non compact three-manifolds. Those representations 
are not always discrete. 
By experimentally computing their limit set, one can determine that those with fractal limit sets are discrete. 
Many of those discrete representations can be related to $(3,3,n)$ complex hyperbolic triangle groups. 
By exact computations, we verify the existence of those triangle representations, which have boundary unipotent holonomy.
We also show that many representations are redundant: for $n$ fixed, all the $(3,3,n)$ representations encountered are conjugate and only one among them is uniformizable.
\end{abstract}
\begin{center}
\textbf{Keywords}

complex hyperbolic triangle groups; spherical CR representations;  boundary unipotent representations; knot link complements; limit sets
\end{center}

\tableofcontents

\section{Introduction}

Falbel, Koseleff and Rouillier~\cite{FKR} explicitly computed a large number
of  fundamental group representations of knot and link complements in $\PGL(3,\C)$ and in particular in $\PU(2,1)$ (it is “a great portion” among all, according to~\cite{FKR}). With those numerical methods we have \emph{all} the  boundary unipotent  representations in $\PU(2,1)$  for the complements described by  four  or less tetrahedra. They were made available in~\cite{Ptolemy} with the collaboration of~\cite{Garou1,Garou2}. 

A   boundary unipotent representation of $M$ is a representation of the fundamental group $\pi_1(M)$ such that each subgroup corresponding to each cusp is sent to an abelian subgroup of $\PU(2,1)$ generated by one or two  unipotent transformations.

With those representations, one can raise some delicate questions. \emph{Which  are discrete? Which  have an image that is a  complex hyperbolic triangle group up to finite index?}

\vskip10pt

The group $\PU(2,1)$ is the holomorphic isometry group of the \emph{complex hyperbolic plane} $\mathbf H_\C^2$.
Complex hyperbolic triangle groups will be defined and discussed in section~\ref{sec-2-triangle}.  For the moment, we see them as the representations of the
\emph{abstract triangle groups}
\begin{equation}
\Lambda(p,q,r) = \left\langle a,b,c \; \middle\vert \;
\begin{matrix} 
&a^2=b^2=c^2=e, \\
&(ab)^p=(bc)^q=(ca)^r=e
\end{matrix}\right\rangle, \; 2\leq p\leq q \leq r \leq \infty,
\end{equation}
into $\PU(2,1)$ by complex reflections, with $\frac1p+\frac1q+\frac1r<1$.
Such a representation will be called a 
\emph{complex hyperbolic triangle group}, denoted by $\Delta(p,q,r;\theta)$, and the images of $a$, $b$ and $c$ are  denoted by $I_1$, $I_2$ and $I_3$.  There is a parameter $\theta$ describing the space of the representations, it is the \emph{angular} (or \emph{Cartan})  \emph{invariant}.

There is an important   example of a three-manifold having a representation of its fundamental group with a triangular image.  Falbel~\cite{Falbel} constructed the boundary unipotent  representations of the fundamental group of the figure-eight knot complement. 
There are three such representations and they are discrete. Among them,  two  are also (index two) complex hyperbolic triangle groups by~\cite{DF}. To be more specific, those two representations can be identified with the normal subgroup of the even-length words  of a complex hyperbolic triangle group $\Delta(3,3,4;\theta)$. We will denote  index two normal subgroup by $\Delta_2(p,q,r;\theta)\subset\Delta(p,q,r;\theta)$.

Complex hyperbolic triangle groups furnish a tiny subset of infinite covolume discrete subgroups of $\PU(2,1)$. They are Coxeter groups and are usually recognized as surface groups. But we expect it to be  a very rich class of representations even in the world of three-manifolds.

\vskip10pt

In our study, we take  a boundary unipotent representation $\rho$ and ask wether it is discrete and if its image is in a complex hyperbolic triangle group.  

To study the discreteness of a representation we chose to numerically approximate   its limit set. This set is an attractor for the iteration dynamic and a simple argument allows us to support the discreteness: if the limit set is fractal (that is to say, is neither dense in $S^3$ nor a smooth circle) then the representation is discrete.

\vskip10pt
Stored in SnapPy~\cite{Ptolemy} by a joint work of~\cite{FKR} with~\cite{Garou1,Garou2}, we have at our disposition $1653$ boundary  unipotent representations.
By inspection of all the approximated limit sets, we found 
$35$ 
pairs of representations (paired by complex conjugation in $\PU(2,1)$) that have  a fractal limit set.\footnote{We also observed that every representation from the census that gave a fractal had each boundary component (a copy of $\Z^2$) sent to a \emph{rank one} parabolic subgroup.}
They concern $20$ manifolds of the census. In figures~\ref{fig-all-1} and~\ref{fig-all-2} (in the appendix)  we exposed all those fractal limit sets.

Among those 
$35$
pairs of representations,  
$21$  are in fact derived from complex hyperbolic triangle groups $(3,q,r)$. 
They come from $19$ manifolds.
And among those $21$ triangle representations, $19$ are derived from  $(3,3,n)$ complex hyperbolic triangles. 

On the remaining $14$ pairs, we show that 
$3$
 are not surjective morphisms into an index two $(3,3,n)$-triangle group (see proposition~\ref{prop-counterex}) and that an additional one is a triangular representation, but different from the others (see proposition~\ref{prop-m023}): it is a \emph{Lagrangian} triangle group (as defined in~\cite{Will07}).

\vskip10pt

We will (mathematically) prove that those $19$ triangle representations
do in fact come from index two $(3,3,n)$ complex hyperbolic triangle groups.
All those representations are discrete.

 We show that the conjugacy class can be chosen so that $I_3I_2I_1I_2$ generates the boundary holonomy of the fundamental group. This phenomenon is  not true  in general (see proposition~\ref{prop-m023}).
 For a fixed abstract triangle group, the transformation $I_3I_2I_1I_2$ is unipotent for a unique value of the angular invariant $\theta$ that we will denote $\theta_\infty$.

For each of those triangle representations, we find that the  image group is the even-length words subgroup  $\Delta_2(p,q,r;\theta_\infty)\subset\Delta(p,q,r;\theta_\infty)$. 
As it is suggested by its importance,
we consider $\Lambda_2(p,q,r)$ (the abstract version of $\Delta_2(p,q,r;\theta_\infty)$) defined by:
\begin{equation}
\Lambda_2(p,q,r) = \left\langle x,y \; \middle\vert \; x^p=y^q=(xy)^r=e\right\rangle.
\end{equation}

The inclusion $\Delta_2(p,q,r;\theta)\subset\Delta(p,q,r;\theta)$ corresponds to an inclusion morphism $\Lambda_2(p,q,r)\to\Lambda(p,q,r)$ defined by  $x\mapsto ab$ and $y\mapsto bc$.

\paragraph{Redundancy}
We call \emph{redundancy} the phenomenon of having several manifold fundamental groups represented in a same $\Delta_2(p,q,r;\theta_\infty)$, or more generally in a same $\Lambda_2(p,q,r)$. To assert the equality of the images, we allow conjugacy in $\PU(2,1)$ and complex conjugation.

Deraux~\cite{Deraux}  first showed that m009 and m015 are two manifolds with representations  in a same $\Delta_2(3,3,5;\theta_\infty)$. But only the representation of m009 is a \emph{spherical CR uniformization}.

The \emph{spherical CR structure} is the pair $(\PU(2,1),\partial \mathbf H_\C^2)$.
There is a delicate relationship between a representation  of a manifold $M$ (of its fundamental group) in $\PU(2,1)$ and a \emph{uniformizable} spherical CR representation of $M$. In the latter case, the image group $\Gamma$ completely determines $M$: if $U\subset\partial\mathbf{H}^2_\C$ is its discontinuity domain then $U/\Gamma$ is diffeomorphic to $M$ (that is our definition of being uniformizable, as in~\cite{Deraux}). 

It is very hard in general  to determine if a representation in $\PU(2,1)$ is the holonomy of a spherical CR uniformization.   In our study, we  relied on~\cite{Acosta} to identify uniformizations.

\begin{theorem*}[\ref{thm-rest}]
\textthmrest
\end{theorem*}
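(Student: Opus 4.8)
The plan is to separate the two assertions implicit in the statement: first, that each listed manifold carries a boundary unipotent representation whose image is, up to conjugacy in $\PU(2,1)$ and complex conjugation, exactly the group $\Delta_2(3,3,n;\theta_\infty)$; and second, that within each row precisely the manifold in the ``Uniformization'' column is spherical CR uniformized, the remaining ones being redundant. The second assertion turns out to be essentially a formal consequence of the first, so the bulk of the work lies in the identification of the images.

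For the identification, I would start from the matrices supplied by the census of~\cite{FKR,Ptolemy}. These are given numerically, so the first task is to recognize the generating complex reflections $I_1,I_2,I_3$ of a standard presentation of $\Delta(3,3,n;\theta_\infty)$ and to produce, for each representation $\rho$, an explicit element $g\in\PU(2,1)$ (possibly after applying complex conjugation) such that $g\,\rho(\pi_1 M)\,g^{-1}$ is generated by those reflections. This is where the exact computations enter: one lifts the approximate entries to algebraic numbers in a suitable number field, verifies the Coxeter relations $I_1^2=I_2^2=I_3^2=e$, $(I_1I_2)^3=(I_2I_3)^3=(I_3I_1)^n=e$ exactly (for finite $n$; when $n=\infty$ the third relation is simply dropped, so that $\Lambda_2(3,3,\infty)$ is the free product $\Z/3\ast\Z/3$), and checks that the word $I_3I_2I_1I_2$ generating the boundary holonomy is indeed unipotent, which forces the angular invariant to equal $\theta_\infty$. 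Passing to even-length words and using the inclusion $\Lambda_2(3,3,n)\to\Lambda(3,3,n)$, $x\mapsto ab$, $y\mapsto bc$, one then confirms that the image is the even subgroup $\Delta_2(3,3,n;\theta_\infty)$; surjectivity onto this subgroup is verified by exhibiting $x$ and $y$ as explicit words in $\rho(\pi_1 M)$. Discreteness is no extra burden here, since complex hyperbolic $(3,3,n)$ triangle groups are discrete and the fractal limit sets observed numerically already support it.

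Granting the identification, the redundancy dichotomy follows almost immediately. Write $\Gamma=\Delta_2(3,3,n;\theta_\infty)$ for the common image in a given row. By definition, uniformizability of a manifold $M$ means that its representation has discontinuity domain $U\subset\partial\mathbf{H}^2_\C$ with $U/\Gamma$ diffeomorphic to $M$. But $U$, hence $U/\Gamma$, depends only on $\Gamma$ (conjugation in $\PU(2,1)$ and complex conjugation preserve the diffeomorphism type of the quotient), so $U/\Gamma$ is a single well-defined manifold. Consequently at most one manifold in each row can be uniformized. To pin down which one, I would invoke the uniformization criterion of~\cite{Acosta} together with the already-known uniformizations (for instance m004 and m009 via~\cite{Deraux}) to certify that the manifold in the ``Uniformization'' column is diffeomorphic to $U/\Gamma$; every other manifold in the same row then fails to be uniformized and is redundant.

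The main obstacle is the exact identification step rather than the formal redundancy argument. Turning the numerical census data into certified algebraic matrices, choosing the conjugator $g$, and verifying the triangle-group relations together with the unipotency of $I_3I_2I_1I_2$ over the correct number field all require careful computer-assisted arithmetic; the subtlety is compounded by the fact that some representations (see proposition~\ref{prop-counterex} and proposition~\ref{prop-m023}) only \emph{appear} to be $(3,3,n)$ groups but are not, so the verification must be exact and must distinguish genuine index-two triangle images from impostors. A secondary difficulty is applying the criterion of~\cite{Acosta} to confirm which manifold realizes $U/\Gamma$, since deciding uniformizability directly from a representation is in general delicate.
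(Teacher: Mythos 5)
Your overall architecture is reasonable, but it contains a step that fails outright: you make the census matrices the \emph{source} of the representations, whereas two of the manifolds in the table, m039 and s000, are described by five and six tetrahedra respectively and therefore do not appear in the census at all (and the census description of m053 is too complicated to use directly). For the $(3,3,7)$ and $(3,3,8)$ rows your identification procedure has no input data, so the existence of their boundary unipotent triangular representations cannot be established your way. The paper's logic runs in the opposite direction: it first constructs, purely from a presentation of $\pi_1(M)$, a surjective morphism $\rho\colon\pi_1(M)\to\Lambda_2(3,3,n)$ (by checking that the single SnapPy relation is a consequence of a few ``triangle type'' relations such as $a^4$, $b^3$, $(Ab)^3$, and exhibiting preimages $w_x,w_y$ of $x,y$), then composes with the canonical surjection $\Lambda_2(3,3,n)\to\Delta_2(3,3,n;\theta_\infty)$, and verifies by hand that the peripheral words reduce to conjugates or powers of the image of $xy^{-1}$, which is unipotent precisely at $\theta_\infty$. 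This proves existence, surjectivity onto $\Delta_2(3,3,n;\theta_\infty)$, and boundary unipotency uniformly for every manifold in the table, census or not; the census only enters afterwards, to identify which stored representation one has constructed.

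A second, more technical weakness is your plan to produce an explicit conjugator $g$ taking each census representation to a standard reflection triple. Besides being computationally heavy, it is unnecessary: the paper replaces it with the trace-rigidity result of Lawton and Will (proposition~\ref{prop-Lawton}) and its corollary~\ref{cor-identif} --- for an irreducible representation of a two-generator group into $\PU(2,1)$, the four traces on $w_x,w_y,w_{xy},w_{xy^{-1}}$ (all real and prescribed by $(3,3,n)$, with $\tr$ of $w_{xy^{-1}}$ equal to $3$), together with the fact that $\tr(\xi([a,b]))$ satisfies a real quadratic whose roots are complex conjugates, determine the representation up to conjugation \emph{and} complex conjugation. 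Your worry about ``lifting approximate entries to algebraic numbers'' is also misplaced: the census stores exact matrices over algebraic number fields, so the trace checks are exact as they stand. Two smaller points: for $n=\infty$ one does not merely drop the relation $(xy)^n$ but must additionally require that $xy$ be sent to a unipotent element (the paper asks $I_1I_3$ to be unipotent); and your redundancy argument, via Acosta's theorem and SnapPy's identification of the $(1,n-3)$ filling of the Whitehead link complement, does match the paper --- provided one handles the peripheral marking convention (calling the manifold $5^2_1$ and filling with meridian $n-3$, longitude $1$), which your sketch glosses over.
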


\experimentaltable

\paragraph{Notes}
We use the nomenclature of SnapPy for the three-manifolds “mxxx”.

Our proof relies on the  computation of a surjective morphism from $\pi_1(M)$ to $\Lambda_2(3,3,n)$. Indeed, we can observe that the data of a surjective morphism into a $\Lambda_2(p,q,r)$ always furnishes a surjective representation into the corresponding $\Delta_2(p,q,r;\theta_\infty)$ since $\Lambda_2(p,q,r)\to\Delta_2(p,q,r;\theta_\infty)$ is always surjective. It will remain to check that the peripheral holonomy  is indeed unipotent.

We cannot \emph{mathematically} prove that the representations appearing in the theorem are the only ones that can be found in the census~\cite{Ptolemy}. There might be other representations in the census that are triangular by a $\Delta_2(3,3,n;\theta_\infty)$ triangle group.
But we are strongly confident that we found all of them by the approximation of the limit sets.

A consequence of our work is that the property of having a surjective morphism onto a $\Lambda_2(p,q,r)$ is not a clearly understood property: some manifolds possess several (\emph{e.g.} m053) and redundancy frequently  occurs. 

More recently, Will~\cite{WillHdr} produced a list of more manifolds admitting a representation into $\Lambda_2(3,3,\infty)\cong \mathbf Z_3*\mathbf Z_3$ and even $\Lambda_2(p,q,\infty)\cong \mathbf Z_p* \mathbf Z_q$. Most manifolds appearing in his list are not contained in the census~\cite{Ptolemy}.

\paragraph{Deformations}
 Let $M$ have a surjective morphism $\rho\colon\pi_1(M)\to\Lambda_2(p,q,r)$. Then this (abstract) representation gives various  representations $\pi_1(M)\to\Lambda_2(p,q,r)\to \PU(2,1)$. 
The space of all those representations up to conjugation in $\PU(2,1)$ has real dimension $2$. The principal invariant to distinguish two representations is the trace of the image of $xy^{-1}\in\Lambda_2(p,q,r)$. (See the proposition~\ref{prop-Lawton} and the following discussion for details.)

The manifolds considered in theorem~\ref{thm-rest} that  originally had a few discrete unipotent representations now automatically get a $2$-dimensional subset in their character variety. In general, the character variety has dimension at least two. When it is minimal, as with the figure-eight knot complement, then this construction gives an open component (compare with~\cite{FalbelAl}). 

Numerical approximations of  deformations of  some triangle groups are available on the webpage of the author~\cite{rph}.

\vskip10pt

In most cases, a surjective morphism $\pi_1(M)\to\Lambda_2(3,3,n)$ is represented in $\PU(2,1)$ by $\Delta_2(3,3,n;\theta_\infty)$ and gives a boundary  unipotent representation of $M$.
But this phenomenon is not always true and reveals that complex hyperbolic triangles are not the only ones to be considered.

\begin{proposition*}[\ref{prop-m023}]
There exists a surjective morphism 
\begin{equation}
\xi\colon \pi_1({\rm m023}) \to \Lambda_2(3,3,4),
\end{equation}
and a composition $\pi_1({\rm m023})\to\Lambda_2(3,3,4)\to\PU(2,1)$ having boundary unipotent holonomy.
The boundary unipotent representation in $\PU(2,1)$ is not induced by any representation $\Lambda_2(3,3,4)\to\Delta_2(3,3,4;\theta)$.
\end{proposition*}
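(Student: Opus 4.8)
The plan is to prove this proposition in two movements. First, exhibit an explicit surjective morphism $\xi\colon\pi_1(\mathrm{m023})\to\Lambda_2(3,3,4)$ and verify that a suitable $\PU(2,1)$-representation of $\Lambda_2(3,3,4)$ pulls back to a boundary unipotent representation of $\mathrm{m023}$. Second, and this is the crux, show that the resulting representation cannot arise from the complex-reflection (triangle-group) family $\Delta_2(3,3,4;\theta)$ for any value of $\theta$.

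**For the first part,** I would start from a presentation of $\pi_1(\mathrm{m023})$ (available from SnapPy) together with the peripheral (cusp) generators. The morphism $\xi$ is specified by sending the generators of $\pi_1(\mathrm{m023})$ to explicit words in $x,y$ where $\Lambda_2(3,3,4)=\langle x,y\mid x^3=y^3=(xy)^4=e\rangle$; I would check that all relators of $\pi_1(\mathrm{m023})$ map to the identity (a finite computation in the relevant finitely presented group) and that the image generates $\Lambda_2(3,3,4)$, giving surjectivity. Then, following the note in the excerpt, surjectivity onto $\Lambda_2(3,3,4)$ composed with a representation into $\PU(2,1)$ yields a $\PU(2,1)$-representation of $\pi_1(\mathrm{m023})$; it remains only to verify that the peripheral holonomy is unipotent. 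Concretely I would compute the image under $\xi$ of each cusp generator, map it into $\PU(2,1)$, and check that the resulting matrix is unipotent (eigenvalues all $1$, non-diagonalizable), selecting the appropriate representation in the $2$-parameter family of $\PU(2,1)$-representations of $\Lambda_2(3,3,4)$.

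**The hard part is the non-realizability claim,** namely that no complex hyperbolic triangle group $\Delta_2(3,3,4;\theta)$ produces this representation. Here I would exploit the structural distinction highlighted in the introduction: a $\Delta_2(3,3,r;\theta_\infty)$ is distinguished precisely by having its boundary holonomy generated by the word $I_3I_2I_1I_2$ being unipotent, whereas for $\mathrm{m023}$ the peripheral holonomy arises from a \emph{different} word in the generators. The plan is to compute, in the abstract group $\Lambda(3,3,4)=\langle a,b,c\rangle$, which word $w$ in $a,b,c$ the image $\xi$ of the cusp subgroup corresponds to under the inclusion $\Lambda_2\to\Lambda$ (via $x\mapsto ab$, $y\mapsto bc$), and then show that for \emph{every} $\theta$ the complex-reflection representation of $\Lambda(3,3,4;\theta)$ sends $w$ to a non-unipotent element — so unipotency of the cusp forces a representation outside the triangle family. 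The cleanest invariant to separate these is the trace (or the conjugacy-invariant spectral data) of the image of the cusp word, which for a triangle group $\Delta_2(3,3,4;\theta)$ is a specific real-analytic function of $\theta$ whose value can be computed explicitly from the complex reflections $I_1,I_2,I_3$.

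**Concretely, the decisive step** is to show this trace function never attains the unipotent value (trace $=3$ with the correct Jordan type) for the particular cusp word of $\mathrm{m023}$, in contrast to the word $I_3I_2I_1I_2$ which does so exactly at $\theta_\infty$. I expect this to reduce to solving or analyzing a single algebraic equation in the triangle-group parameter; the main obstacle is handling it by \emph{exact} computation (over a suitable number field, as the paper emphasizes doing throughout) rather than numerically, so as to rule out the unipotent value rigorously for all $\theta$ and not merely fail to find it. Once the trace obstruction is established, it follows that the boundary unipotent $\PU(2,1)$-representation of $\mathrm{m023}$ factors through $\Lambda_2(3,3,4)$ but is \emph{not} the restriction of any complex-reflection triangle representation, completing the proof — and identifying, as the introduction anticipates, a Lagrangian triangle group rather than a complex hyperbolic one.
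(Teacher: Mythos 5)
Your proposal is correct in substance, and its first half --- writing down an explicit morphism on a presentation of the fundamental group and checking the cusp words --- is essentially what the paper does (the paper passes to the Dehn filling $\mathrm{m023}(2,-1)$, with presentation $a^4b^3=aBab^2ab^2=e$, sets $\rho(a)=xy$, $\rho(b)=x$, and observes that the cusp word $aBAb$ maps to $[x,y]$). For the crux, however, you take a genuinely different route. You propose to restrict to the triangle family and show that the $\theta$-dependent trace of the image of the cusp word never attains the unipotent value; this is feasible, and can be streamlined: under $x\mapsto I_1I_2$, $y\mapsto I_2I_3$ one has $[x,y]\mapsto (I_1I_3I_2)^2$, so your trace function reduces to the classical trace of $I_1I_2I_3$ along the family, and since in $\SU(2,1)$ an element is unipotent if and only if its trace equals $3$ and it is not the identity, your worry about checking Jordan type evaporates --- only the values $\tr(I_1I_3I_2)\in\{3,-1\}$ need excluding. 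The paper inverts the logic: working in the full two-dimensional character variety with the exact Guilloux--Will coordinates, its lemma shows that \emph{any} representation $\eta$ of $\Lambda_2(3,3,4)$ with $\eta([x,y])$ unipotent must have $\tr(\eta(xy^{-1}))$ non-real (since $\tr(\eta([x,y]))=3$ is real, the discriminant of the trace quadratic from proposition~\ref{prop-Lawton} must vanish, and one then solves explicitly, finding $\tr(\eta(xy^{-1}))\simeq 2.820+\ii\cdot 0.222$ for $n=4$), whereas every triangle representation sends $xy^{-1}$ to $I_1I_2I_3I_2=I_1(I_2I_3I_2)$, a product of two complex reflections, whose trace is real for every $\theta$ --- so the triangle family is excluded with no $\theta$-analysis at all. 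The paper's route buys one thing your sketch under-specifies: the \emph{existence} of a representation $\eta$ with $[x,y]$ unipotent, which you only gesture at (``selecting the appropriate representation in the $2$-parameter family'') but which must actually be solved for; this is precisely the content of the paper's lemma, and it simultaneously sets up the identification of the image as an $\R$-decomposable (Lagrangian) triangle group via Paupert--Will, which you correctly anticipate. Your route buys a more elementary one-variable verification of non-realizability, at the cost of needing that existence argument separately.
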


In this case,
 the boundary holonomy is controlled by the element $[x,y]\in \Lambda_2(3,3,4)$. 
(In theorem~\ref{thm-rest} the boundary holonomy is always controlled by  $xy^{-1}\in \Lambda_2(3,3,n)$.)
But for any of  representation $\Lambda_2(3,3,4)\to\Delta_2(3,3,4;\theta)$, the word $[x,y]$ is never sent to a unipotent element of $\PU(2,1)$. This will justify the proposition.

\paragraph{Outline of the paper}
In section~\ref{sec-2}, we succinctly expose a few elements of complex hyperbolic geometry that are needed in the paper. 
In section~\ref{sec-3}, we explain how the numerical experiments were driven. We also propose visual clues in order to recognize the limit sets of the various triangle representations $\Delta_2(3,3,n;\theta_\infty)$.
In section~\ref{sec-4} we state and show the main result, theorem~\ref{thm-rest}. For this, we only employ formal computations, so the result is certain. 
In section~\ref{sec-5} we give other examples. Three examples for which no surjective morphism to $\Lambda_2(3,3,n)$ does exist. We also show the proposition~\ref{prop-m023}, stated earlier.

\paragraph{Methodology}
In this paper we deal with two kinds of evidence. What we call \emph{experimental} arguments are the one arising from the numerical approximations of the limit sets. Those explain how we selected the  candidates for triangle representations.

But the results  and in particular theorem~\ref{thm-rest} and proposition~\ref{prop-m023}  are entirely based on pure mathematical arguments. In fact, they only depend on the description of the fundamental groups of the cited manifolds. Proofs can be checked by hand.

The identification of a representation arising from theorem~\ref{thm-rest} with  a corresponding one in the census~\cite{Ptolemy}  is made by an exact computation with the formal tools provided by Python and SageMath. 
The census has its representations stored as matrices with entrees that are polynomial or rational with rational coefficients in an algebraic field. 
Therefore the representations can be manipulated with exactness. 
It is only when we numerically approximate the limit sets that we numerically approximate the representations.

We will respect the following rule.
\emph{The stated theorems, propositions and lemmas are always mathematical.} That is to say, proven by hand or by formal (exact) computations.

\paragraph{Note}This paper is part of the author's thesis, in progress under the supervision of Elisha Falbel.
\paragraph{Acknowledgments}The author enjoyed many and very fruitful conversations. 
First of all, of course, I am very thankful to Elisha Falbel. 
Since the early stages in making the experimental tools, Fabrice Rouillier and Antonin Guilloux have been of precious help for the improvement  of my code. Across countries, Mathias Görner has been essential to me in order to correctly use the tools provided by SnapPy. I would  like to thank Pierre Will for all his comments and the many hours of discussions we had. I have been pleased to exchange with Miguel Acosta about his theorem and about experimental aspects.   Finally, I would like to thank the reviewers who gave me many comments significantly improving the text.

\section{Elements of complex hyperbolic geometry}\label{sec-2}

In this first section, we expose the main tools and notions in use. One can compare with~\cite{Will}, \cite{Goldman}, \cite{Pratoussevitch} and~\cite{ChenGreenberg}.

We consider the space $\C^{2,1}$: it is $\C^3$ equipped with the Hermitian product of signature $(2,1)$
\begin{equation}
\langle z,w\rangle = z_1\overline{w_1} + z_2\overline{w_2} - z_3\overline{w_3}.
\end{equation}
The subspace of the vectors verifying $\langle z,z\rangle <0$ can be projectivised in $\C\mathbf P^2$ and is identified to the \emph{complex hyperbolic plane}, $\mathbf H_\C^2$. In the affine chart $z_3=1$, one can identify $\mathbf H_\C^2$ with the set of vectors verifying $|z_1|^2 + |z_2|^2 < 1$.  Its boundary in the complex projective plane is a differentiable sphere $S^3$ and is given by $|z_1|^2 + |z_2|^2 = 1$. Those points are in correspondance with the non-zero vector lines in $\C^{2,1}$ verifying $\langle z,z\rangle = 0$.

The unitary group of $\C^{2,1}$ is $\rU(2,1)$ and its projectivised version is $\PU(2,1)$. The  geometrical structure  $(\PU(2,1),S^3)$ is called the \emph{(holomorphic) spherical CR structure}.

Let $A \in \SU(2,1)$. 
If $A$ has a fixed point in $\mathbf H_\C^2$ then $A$ is \emph{elliptic}. 
If $\inf \{d(x,A(x))\}>0$ with $d$ the  associated distance function of $\mathbf H_\C^2$
then $A$ is \emph{loxodromic} (or \emph{hyperbolic}). Otherwise, $A$ is \emph{parabolic}. One can determine the type of $A$ by looking at its trace. We follow Goldman~\cite{Goldman} and let
\begin{equation}
f(\tau) = |\tau|^4 - 8 \Re(\tau^3) +18|\tau|^2 -27.
\end{equation}
If $f(\tr A)> 0$ then $A$ is loxodromic, if $f(\tr A)<0$ then $A$ is elliptic (in fact regular elliptic: all its eigenvalues are different).

When $f(\tr A)=0$ there are three cases: if $\tr(A)^3=27$ then $A$ is (parabolic) unipotent (all its eigenvalues are $1$), otherwise it is either elliptic (and therefore a reflection with respect to a point or a complex geodesic) or ellipto-parabolic (a screw transformation along a complex geodesic). Note that when $\tau$ is real:
\begin{equation}
f(\tau) = (\tau+1)(\tau-3)^3,
\end{equation}
and (under the hypothesis that $\tr(A)$ is real) $A$ is  therefore  regular elliptic if $\tr(A)\in ]-1,3[$, is loxodromic if $\tr(A)\not\in [-1,3]$ and is  unipotent if $\tr(A)=3$.
Note that unipotent transformations are a special kind of parabolic elements.

\vskip10pt
Let $M$ be a smooth manifold and $\pi_1(M)$ its fundamental group. A representation $\rho\colon \pi_1(M) \to \PU(2,1)$ is the holonomy of a (CR) \emph{uniformization} of $M$ if, with  $U\subset \partial \mathbf H_\C^2$ the domain of discontinuity of $\rho(\pi_1(M))$, the quotient  $U/\rho(\pi_1(M))$ (that is generally only an orbifold) is a manifold diffeomorphic to $M$. When $\rho$ is discrete, $U = \partial \mathbf H_\C^2 - L(\rho(\pi_1(M)))$, where $L(\rho(\pi_1(M)))$ is the \emph{limit set} of $\rho(\pi_1(M))$. The next section will describe this set. A manifold admitting such a representation is said to be (CR) \emph{uniformizable}. Those manifolds are central in the study of  spherical CR structures and are determined by the algebraic data of  $\rho$. 
In general, even when $U/\rho(\pi_1(M))$ is a smooth manifold, it is too hard to identify it. It remains unknown which three-manifolds are CR uniformizable.

\subsection{Limit sets}

Let $\Gamma\subset \PU(2,1)$ be a subgroup. Its \emph{limit set} $L(\Gamma)$ is given by:
\begin{equation}
L(\Gamma) = \overline{\Gamma\cdot p}\cap \partial \mathbf H_\C^2,
\end{equation}
where $p\in \mathbf H_\C^2$ is any point ($L(\Gamma)$ is independent of this choice) and  where the overline denotes the topological closure in $\overline{\mathbf H_\C^2}\subset \C\mathbf P^2$.

\begin{lemma}
The main properties of this set are the following. (Compare with~\cite{ChenGreenberg}.)
\begin{enumerate}
\item The limit set $L(\Gamma)$ is compact and $\Gamma$-invariant.
\item If $A\subset \partial \mathbf H_\C^2$ is compact, $\Gamma$-invariant and is  constituted of at least two points, then $L(\Gamma)\subset A$.
\item If $L(\Gamma)=\emptyset$ then $\Gamma$ fixes a point in $\mathbf H_\C^2$.
\item If $L(\Gamma)$ has at most two points, then $L(\Gamma)$ (or $\Gamma$) is said to be \emph{elementary}, otherwise it has an infinite number of points and is perfect (each point is an accumulation point).
\end{enumerate}
\end{lemma}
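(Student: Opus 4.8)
The plan is to treat the four items in turn, with the convergence dynamics of $\PU(2,1)$ on the boundary sphere as the common engine. First I would record the key dynamical input: every $g\in\PU(2,1)$ extends to a homeomorphism of $\overline{\mathbf H_\C^2}$, and by the Cartan $KAK$ decomposition any sequence of distinct elements $(g_n)$ admits a subsequence together with two boundary points $a,b\in\partial\mathbf H_\C^2$ (an attracting and a repelling point) such that $g_n x\to a$ for every $x\in\overline{\mathbf H_\C^2}\setminus\{b\}$, uniformly on compact subsets. This is the rank-one ``north--south'' behaviour, and it is exactly what makes the ``at least two points'' hypotheses in items (2) and (4) usable: having two points lets one dodge the single forbidden point $b$.

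For item (1), compactness is immediate since $L(\Gamma)=\overline{\Gamma\cdot p}\cap\partial\mathbf H_\C^2$ is a closed subset of the compact sphere $\partial\mathbf H_\C^2\cong S^3$. For $\Gamma$-invariance I would use that each $g\in\Gamma$ acts as a homeomorphism of $\overline{\mathbf H_\C^2}$ commuting with topological closure, so that $g\,\overline{\Gamma\cdot p}=\overline{g\Gamma\cdot p}=\overline{\Gamma\cdot p}$, and $g$ preserves the boundary; hence $gL(\Gamma)=L(\Gamma)$.

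Item (2) is the heart of the lemma. Given $\xi\in L(\Gamma)$, I pick distinct $g_n\in\Gamma$ with $g_n p\to\xi$ and pass to a subsequence with attracting and repelling points $a,b$ as above. Since $p$ is interior it differs from $b$, so $g_n p\to a$, forcing $a=\xi$. Because $|A|\ge 2$ I may choose $q\in A$ with $q\ne b$; then $g_n q\to a=\xi$, and each $g_n q\in A$ by invariance, so $\xi\in A$ as $A$ is closed. This gives $L(\Gamma)\subset A$. For item (3), if $L(\Gamma)=\emptyset$ then $\overline{\Gamma\cdot p}$ is disjoint from the boundary, hence a compact subset of $\mathbf H_\C^2$, so the orbit $\Gamma\cdot p$ is bounded; since $\mathbf H_\C^2$ is a complete $\mathrm{CAT}(0)$ space the orbit closure has a unique circumcenter, which $\Gamma$ must fix.

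Item (4) is a definition together with a perfectness claim, which I would again extract from the convergence dynamics. Assuming $|L(\Gamma)|\ge 3$, fix $\xi\in L(\Gamma)$ and take $g_n$ with attracting point $a=\xi$ and repelling point $b$ as in (2). Since $|L(\Gamma)\setminus\{b\}|\ge 2$, I select distinct limit points $\eta_1,\eta_2\in L(\Gamma)\setminus\{b\}$; then $g_n\eta_1\to\xi$ and $g_n\eta_2\to\xi$ with $g_n\eta_1\ne g_n\eta_2$ by injectivity, so for each large $n$ at least one of them lies in $L(\Gamma)\setminus\{\xi\}$. This exhibits points of $L(\Gamma)$ distinct from $\xi$ accumulating at $\xi$, so no point is isolated; as $\xi$ was arbitrary, $L(\Gamma)$ is perfect, hence infinite (indeed uncountable, being a nonempty perfect compact set). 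I expect the main obstacle to be the clean formulation and use of the convergence property on all of $\overline{\mathbf H_\C^2}$ --- specifically identifying the attracting point with the prescribed limit $\xi$ and exploiting the two-point hypothesis to avoid the repelling point $b$; once that dynamical input is in place, all four assertions follow formally.
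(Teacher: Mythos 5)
The paper states this lemma without proof, simply deferring to Chen--Greenberg (``Compare with [ChenGreenberg]''), so there is no in-paper argument to measure you against; your write-up is a correct, self-contained proof along the standard lines for rank-one convergence dynamics. Items (1) and (3) are handled exactly as one should: (1) is formal (closure in the compact space $\overline{\mathbf H_\C^2}$, plus $g\,\overline{\Gamma\cdot p}=\overline{\Gamma\cdot p}$ for $g\in\Gamma$), and (3) is the Cartan circumcenter argument for a bounded invariant set in a complete $\mathrm{CAT}(0)$ space. Items (2) and (4) correctly exploit the north--south behaviour: identifying the attracting point $a$ with the prescribed limit $\xi$ (possible since the interior basepoint $p$ never equals the repelling point $b\in\partial\mathbf H_\C^2$), and using the ``at least two points'' hypothesis to choose a test point avoiding $b$, is precisely the right mechanism, and the injectivity trick in (4) (at least one of $g_n\eta_1, g_n\eta_2$ differs from $\xi$) cleanly gives perfectness.

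One hypothesis in your stated ``key dynamical input'' needs repair, though none of your applications is affected. It is \emph{not} true that every sequence of distinct elements of $\PU(2,1)$ has a subsequence with north--south behaviour: the lemma is stated for arbitrary, possibly non-discrete, subgroups $\Gamma$, and in a non-discrete group a sequence of distinct elements may converge in $\PU(2,1)$ to some $g$, in which case $g_nx\to gx$ for all $x$ and no pair $(a,b)$ exists. The correct hypothesis is that $(g_n)$ leaves every compact subset of $\PU(2,1)$, equivalently $d(p,g_np)\to\infty$; that is what the $KAK$ decomposition delivers after extracting convergent subsequences of the $K$-factors. In every place you invoke the principle you have $g_np\to\xi\in\partial\mathbf H_\C^2$, which forces $d(p,g_np)\to\infty$, so the divergence hypothesis holds automatically and your proofs of (2) and (4) stand as written; but the blanket statement for distinct elements should be replaced by the divergent-sequence version, precisely because discreteness of $\Gamma$ is not assumed anywhere in the lemma.
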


An important result is the following.
\begin{proposition*}[\cite{ChenGreenberg}]
If $\Gamma$ is not discrete then $L(\Gamma)$ is either elementary, or equal to $\partial \mathbf H_\C^2$, or equal to the boundary of a totally geodesic proper subspace, that is to say a smooth circle.
\end{proposition*}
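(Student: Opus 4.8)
The plan is to pass to the closure and reduce everything to a connected Lie group. First I would set $G=\overline{\Gamma}$, the closure of $\Gamma$ in $\PU(2,1)$. Since $\Gamma\cdot p\subseteq G\cdot p\subseteq\overline{\Gamma\cdot p}$ for any base point $p\in\mathbf H_\C^2$, the two orbits have the same closure, so $L(\Gamma)=L(G)$ and it suffices to treat $G$. Because $\Gamma$ is not discrete, $G$ is a closed --- hence Lie --- subgroup of $\PU(2,1)$ of dimension at least $1$. Let $G^0$ be its identity component, a connected subgroup of positive dimension. As $G^0$ is normal in $G$, the set $L(G^0)$ is $G$-invariant, and by point~2 of the lemma above (applied to the compact invariant set $A=L(G^0)$) one gets $L(G)=L(G^0)$ as soon as $L(G^0)$ has at least two points. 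The strategy is therefore to compute $L$ for the connected group $H:=G^0$, treating separately the degenerate case where $L(G^0)$ is elementary.

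For the connected group $H$ I would appeal to the classification of connected Lie subgroups of $\PU(2,1)$ up to conjugacy and compute the limit set of each type directly. The outcomes are exactly: $L(H)=\emptyset$ when $H$ is relatively compact (it then fixes a point of $\mathbf H_\C^2$); $L(H)$ a single point or a pair of points for subgroups fixing one or two boundary points (a unipotent Heisenberg factor, or a loxodromic flow); $L(H)=\partial V$ a smooth circle when $H$ preserves a proper totally geodesic subspace $V$ --- a complex geodesic $\mathbf H_\C^1$ or a Lagrangian plane $\mathbf H_\R^2$ --- and acts non-elementarily on it through the induced map $H\to\mathrm{Isom}(V)\cong\PU(1,1)$; and $L(H)=\partial\mathbf H_\C^2$ for groups whose orbits accumulate everywhere, a class containing $\PU(2,1)$ but also, crucially, nearly transitive solvable groups such as the Iwasawa factor $AN$. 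The last example is a warning: solvability alone does not force a small limit set, so the reduction cannot be phrased in terms of amenability.

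It remains to handle the case where $L(G^0)$ is elementary, and this is where non-discreteness of $\Gamma$ does its real work, through the component group $G/G^0$. In this regime $G^0$ fixes an interior point (whose fixed locus is a point or a complex geodesic) or fixes a boundary point; since $G$ normalizes $G^0$, it preserves the same configuration up to a discrete ambiguity. Either $G$ is then compact, giving $L=\emptyset$, or $G$ is confined to the stabilizer of a proper totally geodesic subspace $V$, so $L(G)\subseteq\partial V$ lies in a circle. The key point is that the image of $G$ in $\mathrm{Isom}(V)\cong\mathrm{PSL}(2,\R)$ must itself be non-discrete: a discrete image would force $G$ to be discrete, because the graph of a homomorphism from a discrete group into the compact normal rotation factor fixing $V$ is discrete, contradicting the non-discreteness of $\Gamma$. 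Thus $L(G)$ is the limit set of a non-discrete subgroup of $\mathrm{PSL}(2,\R)$ inside $\partial V\cong S^1$, which by the classical one-dimensional version of the present statement is empty, a point, two points, or the full circle.

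The main obstacle is twofold. First, one must actually establish the classification of connected Lie subgroups of $\PU(2,1)$ and verify, type by type, that no exotic limit set (a two-torus, a Cantor set, or a genuine fractal) can occur; the delicate point is that the frontier at infinity of a single orbit of a connected group is smooth and homogeneous, so that for instance an irrational screw-flow still limits onto the one or two endpoints of its axis rather than onto a torus. Second, one must run the elementary case cleanly, reducing the induced action to a lower-dimensional ambient space and invoking the base case for non-discrete subgroups of $\mathrm{PSL}(2,\R)$; this gives the argument a mildly inductive flavour in the dimension of the hyperbolic space. Once both points are in place, assembling the outcomes for $H$ with the component-group reduction yields exactly the stated trichotomy: $L(\Gamma)$ is elementary, equals $\partial\mathbf H_\C^2$, or is the bounding smooth circle of a proper totally geodesic subspace.
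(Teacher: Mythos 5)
The paper offers no proof of this proposition at all --- it is imported verbatim from the cited source [ChenGreenberg] --- so your attempt can only be judged on its own merits. Your overall architecture is the standard one and is sound in outline: replacing $\Gamma$ by its closure $G$ (with $L(\Gamma)=L(G)$), reducing to the identity component $G^0$ via the minimality property of the limit set, classifying connected subgroups of $\PU(2,1)$, and invoking a $\mathrm{PSL}(2,\R)$ base case for the induced action on a totally geodesic subspace.

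There is, however, a genuine gap, and it sits exactly where the statement is most delicate: your dispatch of the elementary case via the graph lemma. When $G^0$ is the circle $R$ of rotations about a complex geodesic $V$ (the pointwise stabilizer of $V$ in $\PU(2,1)$, which is central in $\mathrm{Stab}(V)$), the group $G$ \emph{contains} $R=G^0$, so $G$ is automatically non-discrete no matter what its image in $\mathrm{Isom}(V)\cong\PU(1,1)$ is. Your claim that ``a discrete image would force $G$ to be discrete'' inverts the graph lemma: $G$ need not be the graph of a homomorphism over its image, since the kernel $G\cap R$ here is the whole positive-dimensional circle. Concretely, let $\Lambda'\subset\PU(1,1)\subset\PU(2,1)$ be a Schottky group preserving $V$ and let $r$ be a rotation about $V$ of irrational angle; then $\Gamma=\langle r\rangle\times\Lambda'$ is non-discrete (the powers of $r$ accumulate at the identity), while $R$ fixes $V$ --- and $\partial V$ --- pointwise, so $L(\Gamma)=L(\Lambda')$ is a Cantor subset of the circle $\partial V$: more than two points, not a smooth circle, not $\partial\mathbf H_\C^2$. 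In this branch the correct conclusion is only the containment $L(\Gamma)\subseteq\partial V$ (with equality or full-circle limit sets arising when the induced image in $\mathrm{Isom}(V)$ is non-discrete, by your base case), so any complete proof must treat this configuration with the weakened ``contained in the boundary of a totally geodesic subspace'' conclusion rather than equality --- note that this weaker form is still exactly what the paper's application needs, since a fractal limit set that is neither dense, nor a smooth circle, nor contained in one, certifies discreteness. A smaller instance of the same looseness occurs in your one-boundary-fixed-point subcase: the parabolic stabilizer of a point of $\partial\mathbf H_\C^2$ is not the stabilizer of a totally geodesic subspace, so ``$G$ is confined to $\mathrm{Stab}(V)$'' needs a separate argument there (it can be closed using the classical fact that two loxodromic elements sharing exactly one fixed point never generate a discrete group, so that discrete subgroups of a parabolic stabilizer are elementary). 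Until the rotation-kernel case is repaired along these lines, the proof does not go through.
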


By consequence, if $L(\Gamma)$ is a fractal (none of the above cases), then $\Gamma$ is discrete. It is a powerful experimental way to investigate if $\Gamma$ is discrete. Note that there is no general mathematical procedure to prove with certainty that a subgroup is discrete or not.

The  self-similarity property of limit sets can be justified by the following lemma.
\begin{lemma}
Let $\Gamma$ be a discrete subgroup of $L(\Gamma)$ and suppose that $L(\Gamma)$ is not elementary. Let $a\in L(\Gamma)$ be any point and $V$ be any open neighborhood of $a$. Then there exists $\gamma_1,\dots,\gamma_n\in\Gamma$ such that
\begin{equation}
L(\Gamma)= \bigcup_{i=1}^n \gamma_i\cdot \left(V \cap L(\Gamma)\right).
\end{equation}
\end{lemma}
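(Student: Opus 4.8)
The plan is to reduce the statement to two ingredients: the \emph{minimality} of the $\Gamma$-action on $L(\Gamma)$, and the compactness of $L(\Gamma)$ recorded as property (1) of the previous lemma. Throughout, $\Gamma$ denotes the given discrete subgroup of $\PU(2,1)$ and all closures are taken in $\partial\mathbf H_\C^2$.

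First I would establish minimality, namely that every orbit $\Gamma\cdot b$ with $b\in L(\Gamma)$ is dense in $L(\Gamma)$. Fix such a $b$ and consider $\overline{\Gamma\cdot b}$. Since $L(\Gamma)$ is closed, $\Gamma$-invariant and contains $b$, it contains $\overline{\Gamma\cdot b}$; conversely $\overline{\Gamma\cdot b}$ is a compact, $\Gamma$-invariant subset of $\partial\mathbf H_\C^2$. As $\Gamma$ is non-elementary, this orbit closure must contain at least two points, for a single-point closure would mean $b$ is a common fixed point of all of $\Gamma$, contradicting the non-elementary hypothesis. Property (2) of the previous lemma then gives $L(\Gamma)\subseteq\overline{\Gamma\cdot b}$, whence $\overline{\Gamma\cdot b}=L(\Gamma)$ and the orbit is dense.

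Next I would exploit density to show that the single relatively open piece $V\cap L(\Gamma)$ already covers $L(\Gamma)$ under the full group. Let $b\in L(\Gamma)$ be arbitrary. By the minimality just proved, the orbit $\Gamma\cdot b$ is dense, so it meets the neighborhood $V$ of $a$: there is $\gamma'\in\Gamma$ with $\gamma' b\in V$. Since $\Gamma$-invariance forces $\gamma' b\in L(\Gamma)$ as well, we obtain $\gamma' b\in V\cap L(\Gamma)$, that is $b\in(\gamma')^{-1}\cdot\bigl(V\cap L(\Gamma)\bigr)$. Hence $\Gamma\cdot\bigl(V\cap L(\Gamma)\bigr)=L(\Gamma)$. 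Finally I would invoke compactness: $V$ is open in $\partial\mathbf H_\C^2$ and each $\gamma$ acts as a homeomorphism, so every $\gamma\cdot\bigl(V\cap L(\Gamma)\bigr)=(\gamma V)\cap L(\Gamma)$ is open in the subspace topology of $L(\Gamma)$. The family $\{(\gamma V)\cap L(\Gamma)\}_{\gamma\in\Gamma}$ is an open cover of the compact set $L(\Gamma)$, and extracting a finite subcover $\gamma_1,\dots,\gamma_n$ yields exactly the claimed equality.

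I expect the only delicate point to be the verification, inside the minimality step, that $\overline{\Gamma\cdot b}$ has at least two points, so that property (2) is applicable. This is precisely where the non-elementary assumption is genuinely used, since it excludes $b$ being a global fixed point of $\Gamma$; everything else is soft topology (invariance, density of orbits, and a compactness argument to pass from the full orbit to a finite family).
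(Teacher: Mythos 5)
Your proof is correct, but it takes a genuinely different route from the paper's. You isolate the classical \emph{minimality} statement (every orbit $\Gamma\cdot b$, $b\in L(\Gamma)$, is dense in $L(\Gamma)$) as an intermediate step, and then translate density into the covering claim before applying compactness of $L(\Gamma)$. The paper argues directly on the whole sphere: it sets $W=\partial\mathbf H_\C^2 - \bigcup\Gamma\cdot V$, observes that $W$ is compact and $\Gamma$-invariant, that it cannot contain two points (else property (2) would force $L(\Gamma)\subset W$, impossible since $a\in L(\Gamma)\cap V$), rules out $W=\{b\}$ because a discrete group fixing a boundary point is elementary, and concludes $W=\emptyset$ before extracting a finite subcover. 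Both arguments rest on the same three ingredients --- property (2), compactness, and the fact that a \emph{discrete} group with a global boundary fixed point is elementary --- so the difference is one of packaging: your version makes the minimality of the action on the limit set explicit and reusable, at the cost of a slightly longer argument, while the paper's complement trick is shorter but leaves minimality implicit. One wording correction, precisely at the spot you flag as delicate: when you exclude the single-point orbit closure you write that a common fixed point of $\Gamma$ would ``contradict the non-elementary hypothesis,'' but non-elementarity alone does not exclude a global boundary fixed point --- the full stabilizer of a boundary point in $\PU(2,1)$ fixes that point yet has the entire sphere as limit set. The contradiction requires discreteness (the Chen--Greenberg fact that a discrete subgroup fixing a point of $\partial\mathbf H_\C^2$ is elementary), which is exactly what the paper invokes with the phrase ``since it is discrete.'' Since discreteness is among your hypotheses, this is a citation to add rather than a gap in the argument.
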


\begin{proof}
Let $W = \partial \mathbf H_\C^2 - \bigcup \Gamma\cdot V$. It is compact and $\Gamma$-invariant. By construction, $W$ cannot have more than one point. If $W=\{b\}$ then $\Gamma$ fixes $b$ and  $L(\Gamma)$ must be elementary since it is discrete.  Therefore $W=\emptyset$ and it follows that $L(\Gamma)\subset \bigcup \Gamma\cdot V$. By compactness of $L(\Gamma)$, only a finite number of $\gamma_i\in \Gamma$ are necessary.
\end{proof}

\subsection{Complex hyperbolic triangle groups}\label{sec-2-triangle}

We will now describe more precisely the \emph{complex hyperbolic triangle groups}
\begin{equation}
\Delta= \left\langle I_1,I_2,I_3 \; \middle\vert\;
\begin{matrix} 
&I_1^2=I_2^2=I_3^2=e, \\
&(I_1I_2)^p=(I_2I_3)^q=(I_3I_1)^r=e
\end{matrix}\right\rangle \subset \PU(2,1),
\end{equation}
with $I_1$, $I_2$ and $I_3$ all three being complex reflections. The numbers $p,q,r$ are integers possibly infinite. If one of those is infinite, then the corresponding relation is omitted.

We will prove that $\Delta$ only depends on $p,q,r$ and an additional real parameter $\theta$, up to conjugation and complex conjugation. This parameter $\theta$ will be determined by the value of the trace of $I_3I_2I_1I_2$.
In particular,  for any triplet $(p,q,r)=(3,3,r)$, we obtain a unique complex hyperbolic triangle group, up to conjugation and complex conjugation, with $I_3I_2I_1I_2$ unipotent. This will justify the notation $\Delta(p,r,q;\theta)$ to identify a complex hyperbolic triangle group.

\vskip10pt
Recall from the introduction that we can see a complex hyperbolic triangle group as a representation in $\PU(2,1)$ by complex reflection of the \emph{abstract} triangle group
\begin{equation}
\Lambda(p,q,r) = \left\langle a,b,c \; \middle\vert \;
\begin{matrix} 
&a^2=b^2=c^2=e, \\
&(ab)^p=(bc)^q=(ca)^r=e
\end{matrix}\right\rangle.
\end{equation}

When complex hyperbolic triangle groups were exposed by Schwartz~\cite{Schwartz}, he asked the following: when is a complex hyperbolic triangle group  a discrete and injective representation of the corresponding abstract triangle group? He proposed the following important conjecture.

\begin{conjecture}[Schwartz]
A complex hyperbolic triangle group $\Delta(p,q,r;\theta)$ is a discrete and injective representation of $\Lambda(p,q,r)$ in $\PU(2,1)$, if and only if $I_3I_2I_1I_2$ and $I_1I_2I_3$ are both not elliptic. 
\end{conjecture}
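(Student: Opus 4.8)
The plan is to treat the two implications separately, since they are of completely different character. The forward implication---that discreteness and faithfulness force $I_3I_2I_1I_2$ and $I_1I_2I_3$ to be non-elliptic---is elementary, and I would dispatch it first. The reverse implication is the real content of the conjecture, and is where essentially all of the difficulty lives.

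For the forward direction I would argue by contradiction. Suppose $\Delta=\Delta(p,q,r;\theta)$ is discrete and the representation $\Lambda(p,q,r)\to\Delta$ is injective, yet the image $g$ of one of the two words is elliptic. If $g$ is elliptic of infinite order, then the rotation angles of a lift to $\SU(2,1)$ are not all rational multiples of $\pi$, so the closure of $\langle g\rangle$ is a positive-dimensional torus; then $\langle g\rangle$, and hence $\Delta$, is not discrete, a contradiction. If instead $g$ is elliptic of finite order, then some power of it is the identity, so by faithfulness the corresponding abstract word is of finite order in $\Lambda(p,q,r)$ as well. To exclude this I would pass to the standard realization of $\Lambda(p,q,r)$ as the reflection group of a genuine hyperbolic triangle with angles $\frac{\pi}{p},\frac{\pi}{q},\frac{\pi}{r}$ (available precisely because $\frac1p+\frac1q+\frac1r<1$), in which $a,b,c$ are the reflections in the three sides. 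In that model the torsion elements are exactly the conjugates of powers of $ab$, $bc$, $ca$ coming from the three vertices; one checks that $abc$ (corresponding to $I_1I_2I_3$, an orientation-reversing glide reflection) and $cbab=c\,(bab)$ (corresponding to $I_3I_2I_1I_2$, a product of two reflections in disjoint mirrors, hence a hyperbolic isometry) are not of this form. Both abstract words therefore have infinite order, contradicting finite order of $g$. Both cases being impossible, the images are non-elliptic.

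For the reverse direction the strategy I would pursue is the construction of an explicit fundamental domain together with a complex hyperbolic version of Poincaré's polyhedron theorem. Concretely, I would build a polyhedron in $\mathbf H_\C^2$ bounded by bisectors (equidistant hypersurfaces) associated to the mirrors of $I_1,I_2,I_3$, verify the face-pairing and ridge-cycle conditions, and read off discreteness and faithfulness from the theorem, with the abstract relations $I_1^2=I_2^2=I_3^2=e$ and $(I_1I_2)^p=(I_2I_3)^q=(I_3I_1)^r=e$ emerging exactly as the cycle relations. The role of the two distinguished words is that they govern how the faces of the candidate domain fit together: the non-ellipticity of $I_3I_2I_1I_2$ and $I_1I_2I_3$ is precisely the condition guaranteeing that the domain is embedded and that its combinatorics do not degenerate.

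The main obstacle---and the reason this is stated as a conjecture rather than a theorem---is exactly this reverse direction. The combinatorial type of the fundamental domain varies with $(p,q,r)$ and with $\theta$, bisectors in $\mathbf H_\C^2$ are not totally geodesic and can meet in genuinely complicated ways, and the tessellation hypotheses of the polyhedron theorem must be reverified in each regime by delicate estimates. I would expect to be able to carry this through only in restricted ranges---for instance when one of $p,q,r$ is large, so that one of the two words stays uniformly non-elliptic and the geometry is controlled by a single degeneration---whereas the uniform statement across all $(p,q,r)$ is, to my knowledge, still out of reach.
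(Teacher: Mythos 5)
The statement you are proving is labelled a \emph{conjecture} in the paper, and that is not a formality: the paper contains no proof of it, and none exists. What the paper records are partial results --- Schwartz~\cite{Schwartz3} when $\min(p,q,r)$ is large, Grossi~\cite{Grossi}, Parker--Wang--Xie~\cite{ParkerWangXie} for $(3,3,n)$ (theorem~\ref{thm-schwarz33n}), Parker--Will~\cite{ParkerWill} for $(3,3,\infty)$ --- all obtained essentially by the fundamental-domain/Poincar\'e-polyhedron strategy you sketch for the reverse direction. So your assessment of the reverse implication is accurate: it is a program, not a proof, and you are right that this is precisely why the statement is a conjecture. A correct review of your proposal therefore reduces to the forward implication, which you do claim to establish.

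That forward argument has two genuine gaps. First, it is false as literally stated when $p=2$: since $(I_1I_2)^2=e$ the reflections $I_1,I_2$ commute, so $I_2I_1I_2=I_1$ and $I_3I_2I_1I_2=I_3I_1$ is elliptic of order $r$, while the rigid ($\theta=0$) $\R$-Fuchsian $(2,q,r)$ group is discrete and faithful. Correspondingly your geometric claim breaks down: $bab=a$, so $c\,(bab)=ca$ is \emph{not} a product of reflections in disjoint mirrors. The conjecture must be read on the nondegenerate range ($p\geq 3$), and even there the disjointness of the mirror of $c$ from $b(\text{mirror of }a)$ is exactly the assertion that $cbab$ is hyperbolic in $\mathbf H^2_{\R}$; your ``one checks'' elides the hyperbolic-trigonometric (or $\mathrm{PSL}(2,\R)$-trace) computation that is the actual content of this step. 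Once infinite order in $\Lambda(p,q,r)$ is secured, your case split is fine: elliptic of infinite order kills discreteness (torus closure), elliptic of finite order kills faithfulness. Second, your description of torsion in $\Lambda(p,q,r)$ is wrong as stated: the reflections $a,b,c$, and more generally all elements of the dihedral vertex stabilizers, are torsion, not only conjugates of powers of $ab$, $bc$, $ca$. The correct statement --- every finite-order element is conjugate into one of the vertex groups $D_p$, $D_q$, $D_r$, because the action on the contractible space $\mathbf H^2_{\R}$ is proper --- still yields what you want: $abc$ is orientation-reversing, an orientation-reversing torsion element would be conjugate to a reflection, and $abc$ is a glide reflection for any nondegenerate triangle; but this corrected route, together with the disjointness computation above, needs to be written out for the forward direction to stand.
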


Note that, in some rare cases, $\Delta(p,q,r;\theta)$ can remain discrete (but not injective) with $I_3I_2I_1I_2$ and $I_1I_2I_3$ elliptic of finite order.
For example, Thompson~\cite{Thompson} found a representation $\Delta(3,3,4;\theta_7)$ with $I_3I_2I_1I_2$ of order $7$ and a representation $\Delta(3,3,5;\theta_5)$ with $I_3I_2I_1I_2$ of order $5$, and  both representations are lattices of $\PU(2,1)$.

Schwartz~\cite{Schwartz3} has shown his conjecture when $\min(p,q,r)$ is large enough.
A first step toward the general case is a result due to Grossi~\cite{Grossi}. In particular for $(p,q,r)=(3,3,n)$, Grossi shows that if $I_3I_2I_1I_2$ is not elliptic then $I_1I_2I_3$ is not either. A proof of Schwartz' conjecture in the case of $(3,3,n)$ has been given by Parker, Wang and Xie~\cite{ParkerWangXie} and the case of $(3,3,\infty)$ has been studied by Parker and Will~\cite{ParkerWill}.

\begin{theorem}[\cite{ParkerWangXie},\cite{ParkerWill}]\label{thm-schwarz33n}
Let $4\leq n \leq \infty$. Let $\Delta(3,3,n;\theta)$ be a hyperbolic $(3,3,n)$ triangle group. Then $\Delta(3,3,n;\theta)$ is  discrete and faithful   if and only if $I_3I_2I_1I_2$ is not elliptic.
\end{theorem}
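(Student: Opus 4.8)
The plan is to split the equivalence into its two implications and to use Grossi's result~\cite{Grossi} as a hypothesis-strengthener in the difficult direction. Since for $(p,q,r)=(3,3,n)$ the non-ellipticity of $W:=I_3I_2I_1I_2$ already forces $I_1I_2I_3$ to be non-elliptic, proving the \emph{if} direction of the theorem is exactly proving Schwartz's conjecture in this case, and in doing so I am free to also assume that $I_1I_2I_3$ is non-elliptic. The \emph{only if} direction needs no input from Grossi and can be settled directly.

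For the \emph{only if} direction I would argue by contraposition, assuming $W$ elliptic. If $W$ has infinite order, then $W$ lies in the compact group stabilizing its fixed point in $\mathbf H_\C^2$, so its powers accumulate and $\langle W\rangle$ is already non-discrete; hence $\Delta(3,3,n;\theta)$ is not discrete. If instead $W$ has finite order $k$, then the relation $W^k=e$ holds in the image. I would then check that the corresponding word $cbab$ (writing $I_1=a$, $I_2=b$, $I_3=c$) has infinite order in the abstract group $\Lambda(3,3,n)$: realizing $\Lambda(3,3,n)$ as a reflection group of the real hyperbolic plane (legitimate since $\frac13+\frac13+\frac1n<1$ for $n\geq 4$), the element $cbab=(cb)(ab)$ is a product of two order-three rotations about distinct points, hence of infinite order. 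The relation $W^k=e$ is then not a consequence of the defining relations, so $\Lambda(3,3,n)\to\Delta(3,3,n;\theta)$ has nontrivial kernel and the representation is not faithful.

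The substance is the \emph{if} direction, for which I would build an explicit fundamental domain and invoke the Poincaré polyhedron theorem. Fixing the conjugacy class through the trace of $W$, I would write matrices for $I_1,I_2,I_3$ depending on $\theta$ and locate the critical value $\theta_\infty$ at which $W$ passes from loxodromic to elliptic through a parabolic (unipotent) degeneration. At the free-product endpoint $n=\infty$, where $\Lambda_2(3,3,\infty)\cong\mathbf Z_3*\mathbf Z_3=\langle I_1I_2\rangle*\langle I_2I_3\rangle$, I would instead run a ping-pong argument on $\partial\mathbf H_\C^2$ with the two order-three elliptic generators, directly yielding discreteness and faithfulness of the free product, as Parker and Will do in~\cite{ParkerWill}. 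For finite $n$ the relation $(I_1I_2\cdot I_2I_3)^n=e$ forbids a pure free-product splitting, and I would construct a polyhedron bounded by bisectors whose faces are paired by the generators and their inverses. Verifying the face-pairing and ridge-cycle conditions should reproduce exactly the relations $x^3=y^3=(xy)^n=e$, giving both discreteness and the isomorphism of the image with $\Lambda(3,3,n)$, hence faithfulness; the whole loxodromic range would be treated uniformly, with the boundary case $\theta=\theta_\infty$ handled as a limiting configuration where a bounded face degenerates to a tangency at a parabolic fixed point, controlled by a horoball estimate.

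The hard part will be the fundamental domain itself. Bisectors in $\mathbf H_\C^2$ are not totally geodesic and their mutual intersections are genuinely delicate, so proving that the candidate faces meet in the expected combinatorial pattern, and that this pattern is stable across the entire loxodromic range up to and including the parabolic boundary, is where essentially all of the work lies. This is precisely the content carried out by Parker, Wang and Xie~\cite{ParkerWangXie}, and I do not expect a shortcut that avoids a careful analysis of these bisector intersections.
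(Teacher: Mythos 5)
First, a point of context: the paper does not prove this theorem at all --- it is imported by citation from \cite{ParkerWangXie} and \cite{ParkerWill}, so your proposal has to be measured against those works rather than against an in-paper argument. Your architecture for the \emph{if} direction (use \cite{Grossi} to get non-ellipticity of $I_1I_2I_3$ for free, ping-pong between the two order-three generators at $n=\infty$, a bisector-bounded fundamental domain plus the Poincar\'e polyhedron theorem for finite $n$, with the unipotent case $\theta=\theta_\infty$ as a limiting tangency) is indeed the route of the cited papers, and you correctly locate essentially all of the work in the bisector intersection analysis. But this means that direction of your proposal is a plan that defers to the literature, not a proof; that is defensible here only because the paper itself treats the statement as an external result.

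The genuine gap is in the one step you argue in full. In the \emph{only if} direction you claim that $cbab=(cb)(ab)$ has infinite order in $\Lambda(3,3,n)$ \emph{because} it is a product of two order-three rotations about distinct points. That principle is false, and the very same group refutes it: $(ab)(bc)=ac$ is also a product of two order-three rotations about distinct vertices of the triangle, yet it has order exactly $n$ by the defining relation $(ca)^n=e$. In the hyperbolic plane the product of two rotations about distinct centers can be elliptic of finite order, parabolic, or hyperbolic, depending on the rotation angles and the distance between the centers, so the type of $cbab$ must actually be computed. The conclusion is true and the repair is short: write $cbab=\sigma_c\cdot(\sigma_b\sigma_a\sigma_b)$, a product of reflections in the mirror of $c$ and in the $\sigma_b$-image of the mirror of $a$, and check that these two lines are ultraparallel for the $(3,3,n)$ triangle with $n\geq 4$; equivalently, use the trace formula of section~2 at the real point $\cos\theta=-1$ (which satisfies the signature constraint for $n \geq 4$), giving $\tr(I_3I_2I_1I_2)=4\cos\left(\frac\pi n\right)^2+4\cos\frac\pi n>3$, so that $cbab$ is loxodromic in the Fuchsian realization --- which is a faithful discrete representation of $\Lambda(3,3,n)$ by Poincar\'e's polygon theorem --- and hence of infinite order in the abstract group. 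With that substitution your \emph{only if} argument is sound: an infinite-order elliptic $W$ forces non-discreteness via accumulation in the compact stabilizer, and a finite-order elliptic $W$ imposes a relation $W^k=e$ that fails in $\Lambda(3,3,n)$, killing faithfulness.
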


\vskip10pt
Now we proceed to a description of complex hyperbolic triangle groups.
Let $\Delta$ be a  complex hyperbolic triangle group, with $2 \leq p\leq q \leq r\leq \infty$ and $\frac \pi p +\frac \pi q + \frac \pi r <\pi$. Each reflection $I_k$ fixes a complex line (isomorphic to $\mathbf H_\C^1$) in $\mathbf H_\C^2\subset \C\mathbf P^2$.
Let $H_1$, $H_2$ and $H_3$ be the vector hyperplanes of $\C^3$ covering those complex lines.

Let $L_1$, $L_2$ and $L_3$ be the dual complex lines of those hyperplanes defined by $\langle H_k, L_k\rangle = 0$. The group $\Delta$ is fully described by them.
Since $H_k$ is a complex line of $\mathbf H_\C^2$, it is a negative-type complex plane of $\C^3$ ($\langle h,h\rangle<0$), and $L_k$ is a positive-type complex line of $\C^3$ ($\langle l,l\rangle>0$).

We only need to choose a basis vector for each $L_i$ in order to describe those lines. 
We say that the triangle group is \emph{non-degenerate} when 
such basis vectors form a basis of $\C^3$. We will make this assumption from now on.

Let $v_k$ be a basis vector of $L_k$, then
\begin{equation}
I_k(x) = -x + \frac{2\langle x,v_k\rangle}{\langle v_k,v_k\rangle} v_k
\end{equation}
is a complex reflection  and verifies $I_k(h_k) = -h_k$ for any $h_k\in H_k$. That is to say, in $\C\mathbf P^2$, $[I_k(h_k)] = [h_k]$. Therefore $I_k$ indeed defines the reflection fixing $H_k$.
Because $\langle v_k,v_k\rangle >0$, one can normalize $v_k$ so that $\langle v_k,v_k\rangle = 1$. 

The last free parameters describing the $v_k$'s are an angle $z_k\in S^1$ for each $v_k$. One can set $z_1$ and then modify $z_2$ and $z_3$ so that $\langle v_1,v_2\rangle$ and $\langle v_2,v_3\rangle$ are real and positive.
In general, $\langle v_1,v_3\rangle$ is not real and this lack can
be measured by $\arg(\langle v_1,v_3\rangle)$. From an intrinsic  point of view, that is to say without choosing the $z_k$'s, the default for the vertices to be in a same real plane can be measured by
\begin{equation}
\theta = -\arg(\langle v_1,v_2\rangle \langle v_2,v_3\rangle\langle v_3,v_1\rangle).
\end{equation}
The value of $\theta$ is also known under the name of the \emph{angular invariant}.

Once $\langle v_i,v_j\rangle = c_{ij}$ are known, it is easy to evaluate the matrices of $I_1$, $I_2$ and $I_3$ in the basis $(v_1,v_2,v_3)$.
\begin{align}
I_1 &= 
\begin{pmatrix}
1 & 2c_{21} & 2c_{31}\\
0 & -1 & 0\\
0 & 0 & -1
\end{pmatrix}\\
I_2 &= 
\begin{pmatrix}
-1 & 0 & 0\\
2c_{12} & 1 & 2c_{32} \\
0 & 0 & -1
\end{pmatrix}\\
I_3 &= 
\begin{pmatrix}
-1 & 0 & 0 \\
0 & -1 & 0 \\
2 c_{13} & 2c_{23} & 1
\end{pmatrix}
\end{align}

\vskip10pt

We still have to see how  $p$, $q$, $r$ and $\theta$ determine $\langle v_i,v_j\rangle=c_{ij}$. 
For the time being, we suppose $r<\infty$.
In fact, we will prove that the matrix given by the $c_{ij}$'s is equal to:
\begin{equation}H=
\begin{pmatrix}
1 & \cos \frac \pi p & \cos \frac \pi r e^{\ii\theta} \\
\cos \frac \pi p & 1 & \cos \frac \pi q \\
\cos \frac \pi r e^{-\ii \theta} & \cos \frac \pi q & 1
\end{pmatrix}.
\end{equation}
And this shows that the $c_{ij}$'s fully determine $p$, $q$, $r$ and $\theta$ in return. 
This matrix is an Hermitian form preserved by $I_1$, $I_2$ and $I_3$. The determinant of this matrix is given by
\begin{equation}
1  + 2\cos(\theta)\cos\frac\pi p \cos\frac \pi q \cos\frac \pi r
- \cos\left(\frac \pi p\right)^2
- \cos\left(\frac \pi q\right)^2
- \cos\left(\frac \pi r\right)^2.
\end{equation}
This determinant allows us to decide when $H$ has $(2,1)$ for signature. Since the trace of $H$ is $3$, it implies that at least one eigenvalue is positive. Therefore, its determinant is negative if and only if $H$ has $(2,1)$ for signature. That is equivalent to:
\begin{equation}
\cos(\theta) < \frac{-1 + \cos\left(\frac \pi p\right)^2 + \cos\left(\frac \pi q\right)^2 + \cos\left(\frac \pi r\right)^2 }{2\cos\frac\pi p \cos\frac \pi q \cos\frac \pi r}.
\end{equation}
That must be the case since the original Hermitian form $\langle\cdot,\cdot\rangle$ has $(2,1)$ for signature.

If $p=2$ then $c_{12}$ vanishes and one can make both $c_{23}$ and $c_{13}$ real. Therefore, $(2,q,r)$  complex hyperbolic triangle groups are rigid: $\theta$ is always zero.

\vskip10pt 
We now justify the expression of $H$ by computing the $c_{ij}$'s.
Up to conjugation, we can suppose that $H_1\cap H_2$ is generated by $(0,0,1)$. This implies that $v_1$ and $v_2$ are both of the form $(x,y,0)$. Therefore, every $c_{ij}$ is given by $v_{i}^1\overline{v_{j}^1} + v_i^2\overline{v_j^2}$ since at least one of $v_i$ or $v_j$ has a vanishing third coordinate.

Therefore, geometrically speaking, $c_{ij}$ is the cosine of the angle in $\C^2$ formed by the complex lines generated by the first two coordinates of $v_i$ and $v_j$. It is the real part of $c_{ij}$ that is equal to the cosine of the angle formed by the vectors given by the first two coordinates of $v_i$ and $v_j$ (see~\cite[p. 36]{Goldman}). In other terms, if $\langle v_i,v_j\rangle$ is real and $v_i,v_j$ are unitary then $\langle v_i,v_j\rangle = \cos\theta$ with $\theta\in[0,\pi[$. If additionally $\langle v_i,v_j\rangle >0$ then $\theta\in[0,\pi/2]$ (that will be the case for us).

Note that $c_{13}$ is non real in general, but of course $\langle v_1,e^{\ii \theta}v_3\rangle = e^{-\ii\theta}\langle v_1,v_3\rangle = e^{-\ii\theta}c_{13}$ is real and can even be assumed to be positive.
The angle formed by $H_1$ and $H_2$ is equal to $\frac \pi p$ since $(I_1I_2)^p=e$. By taking the duals $v_1\in L_1$ and $v_2\in L_2$, we get $c_{12}=\cos\frac \pi p$. Likewise, $c_{23}=\cos\frac \pi q$ and $e^{-\ii\theta}c_{13}=\cos\frac \pi r$.

\vskip10pt
Finally, one can compute, with $i\neq j \neq k\neq i$:
\begin{equation}
\tr(I_iI_jI_kI_j) = 
16 |c_{ij}c_{kj}|^2 - 16\Re(c_{12}c_{23}c_{31})
+ 4|c_{ik}|^2 - 1,
\end{equation}
and note that in our conventions, we have $\Re(c_{12}c_{23}c_{31})=c_{12} c_{23} c_{13}\cos \theta$.
It shows that $\tr(I_iI_jI_kI_j)$ determines $\pm\theta$ once $(p,q,r)$ is known. Since the complex conjugation changes $\theta$ into $-\theta$, we deduce from our discussion the following results. In the case where $(p,q,r)=(3,3,r)$ we have in fact:
\begin{equation}
\tr(I_iI_jI_kI_j) = 4\cos\left(\frac \pi r\right)^2 - 4\cos\frac\pi r \cos\theta.
\end{equation}

\begin{proposition*}[\cite{Pratoussevitch}]\label{prop-unique-triangle}
Let $3\leq p \leq q \leq r < \infty$ be such that $\frac \pi p + \frac \pi q + \frac \pi r <\pi$. 
A representation of the triangle group
\begin{equation}
\Delta(p,q,r;\theta) = \left\langle I_1,I_2,I_3 \; \middle\vert \;
\begin{matrix} 
&I_1^2=I_2^2=I_3^2=e, \\
&(I_1I_2)^p=(I_2I_3)^q=(I_3I_1)^r=e
\end{matrix}\right\rangle
\end{equation}
into $\PU(2,1)$ is determined by $\theta = \arg(\langle v_1,v_2\rangle\langle v_2,v_3\rangle \langle v_1,v_3\rangle)$ up to conjugation, with $v_1,v_2,v_3$ as previously.
Up to conjugation and complex conjugation, it is determined by
$\tr(I_iI_jI_kI_j)$, with $i,j,k$ pairwise distinct.

Furthermore, $\theta$ verifies
\begin{equation}
\cos(\theta) < \frac{-1 + \cos\left(\frac \pi p\right)^2 + \cos\left(\frac \pi q\right)^2 + \cos\left(\frac \pi r\right)^2 }{2\cos\frac\pi p \cos\frac \pi q \cos\frac \pi r}
\end{equation}
and conversely, this condition suffices to define a representation with that value of $\theta$.
\end{proposition*}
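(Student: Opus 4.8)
The plan is to assemble the explicit computations carried out above into the three asserted statements, using as the central object the Gram matrix $H=(c_{ij})$, $c_{ij}=\langle v_i,v_j\rangle$, of the unit polar vectors normalized by $\langle v_k,v_k\rangle=1$. First I would record that each reflection $I_k$ is recovered from its polar line $[v_k]$, so the group $\Delta$ is determined by the triple $(v_1,v_2,v_3)$; and since non-degeneracy makes this triple a basis of $\C^{2,1}$, any two triples with the same Gram matrix differ by a unique element of $\rU(2,1)$ and hence (by equivariance of the reflection formula) generate conjugate subgroups of $\PU(2,1)$. The relations $(I_1I_2)^p=(I_2I_3)^q=(I_3I_1)^r=e$ fix the moduli $|c_{12}|=\cos\frac\pi p$, $|c_{23}|=\cos\frac\pi q$, $|c_{13}|=\cos\frac\pi r$ as computed above, while the phase freedom in each $v_k$ lets me normalize $c_{12},c_{23}$ to be real and positive. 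The only remaining datum is then the phase-invariant product $\langle v_1,v_2\rangle\langle v_2,v_3\rangle\langle v_3,v_1\rangle$, whose argument is $\theta$; this produces exactly the displayed matrix $H$ and shows that $\theta$ (together with $p,q,r$) determines $H$, hence $\Delta$ up to conjugation, which is the first claim.

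For the inequality I would argue by signature. Because the ambient form $\langle\cdot,\cdot\rangle$ has signature $(2,1)$ and $(v_1,v_2,v_3)$ is a basis, $H$ must itself have signature $(2,1)$; since $\tr H=3>0$ already supplies a positive eigenvalue, this is equivalent to $\det H<0$. Substituting the determinant computed above turns $\det H<0$ into precisely the stated bound on $\cos\theta$, giving necessity. Running the implications backwards gives sufficiency: any real $\theta$ satisfying the inequality makes the explicit $H$ a Hermitian form of signature $(2,1)$, so realizing it as the Gram matrix of a basis of $\C^{2,1}$ and defining the $I_k$ by the reflection formula produces a bona fide triangle group with that angular invariant.

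For the characterization by $\tr(I_iI_jI_kI_j)$ I would use the trace identity already derived. With $(p,q,r)$ fixed all the moduli $|c_{ij}|$ are fixed, so the only variable term is $\Re(c_{12}c_{23}c_{31})=\cos\frac\pi p\cos\frac\pi q\cos\frac\pi r\cos\theta$; thus each such trace is an affine function of $\cos\theta$ with slope $-16\cos\frac\pi p\cos\frac\pi q\cos\frac\pi r$, which is nonzero because $3\le p\le q\le r<\infty$ forces every cosine to be strictly positive. Hence the trace determines $\cos\theta$, and therefore $\theta$ up to sign. As complex conjugation sends $\theta$ to $-\theta$ while leaving the moduli untouched, two representations with equal trace are conjugate after possibly replacing one by its complex conjugate, which is the final claim.

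The step I expect to be most delicate is the conjugation-uniqueness assertion: justifying (via Witt's extension theorem, or directly through the change of basis sending one adapted triple to another) that equal Gram matrices force $\PU(2,1)$-conjugacy, and bookkeeping the phase normalizations carefully enough that $\theta$ emerges as the genuine invariant with complex conjugation as the only residual ambiguity. Everything else is a direct reading of the determinant and trace formulas established in the preceding paragraphs.
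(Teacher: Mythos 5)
Your proposal is correct and follows essentially the same route as the paper's own derivation: it parametrizes the group by the Gram matrix $H=(c_{ij})$ of the normalized polar vectors, fixes the moduli $|c_{ij}|$ by the relations and the phases by rescaling so that $\theta$ is the only remaining invariant, obtains the inequality from $\det H<0$ combined with $\tr H=3$ forcing a positive eigenvalue (and reverses this for sufficiency), and reads off $\cos\theta$ from the trace formula for $\tr(I_iI_jI_kI_j)$, which is affine in $\cos\theta$ with nonzero slope since $\cos\frac\pi p\cos\frac\pi q\cos\frac\pi r>0$ for $3\le p\le q\le r<\infty$. The point you flag as delicate — that equal Gram matrices of a basis force $\rU(2,1)$-conjugacy, hence $\PU(2,1)$-conjugacy by equivariance of the reflection formula — is exactly the step the paper leaves implicit, and your direct change-of-basis justification (no Witt extension needed in the non-degenerate case) settles it.
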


The parameter $\theta$ can be taken in $[0,\pi/2]$ since the complex conjugation exchanges $\theta$ and $2\pi - \theta$. The possible values of $\tr(I_iI_jI_kI_j)$ are constrained by the preceding condition. For example, when $(p,q,r)=(3,3,r)$, we have
\begin{equation}
\cos\theta < \frac{-\frac 12 + \cos\left(\frac \pi r\right)^2 }{\frac 12 \cos\frac \pi r} = \frac{-1 + 2\cos\left(\frac \pi r\right)^2}{\cos\frac \pi r}.
\end{equation}
If we take a look at the trace of $I_iI_jI_kI_j$, its maximum is given for $\cos\theta$ minimal (that is to say $-1$) and its minimum by the maximum of $\cos\theta$. We have the inequalities:
\begin{align}
\tr(I_iI_jI_kI_j) &\leq 4\cos\frac\pi r\left(\cos\frac\pi r + 1\right),\\
\tr(I_iI_jI_kI_j) &>4\cos\left(\frac \pi r\right)^2 - 4\cos\frac\pi r \left(\frac{-1 + 2\cos\left(\frac \pi r\right)^2}{\cos\frac \pi r}\right) =4\left(1 - \cos\left(\frac \pi r\right)^2\right)>0.
\end{align}
This computation shows that the range of the values of $\tr(I_iI_jI_kI_j)$ is included in $\R_+$ and therefore, the range for which $\Delta(p,q,r;\theta)$ is discrete is of the form $[3,m]$, with $m$ the maximum stated before. The value $3$ is indeed reachable: the minimum value for $r$ is $4$, since we have to verify $\frac \pi p + \frac \pi q + \frac\pi r< \pi$, and the maximum value of $4(1 - \cos(\frac \pi r)^2)$ is indeed reached when $r$ is minimal. This value for $r=4$ is $1$.

One can compute the angular invariant required to have $I_iI_jI_kI_j$ unipotent. It is given by:
\begin{equation}
\cos\theta = \cos\frac \pi r - \frac 3{4\cos\frac \pi r}.
\end{equation}

\vskip10pt
When one or several of $p$, $q$ and $r$ are non finite, we can get similar results by replacing the undefined $c_{ij}$ with $\cosh(l_{ij}/2)$, where $l_{ij}$ is the distance between the two complex hyperbolic geodesics $H_i$ and $H_j$. See~\cite{Pratoussevitch}. In particular, it is still true that $\cos\theta$ is determined by $\tr(I_iI_jI_kI_j)$.

\section{Experimental approach}\label{sec-3}

In this section, we explain how we  approximate the limit sets of the representations appearing in the census~\cite{Ptolemy}. 

We also propose a comparative experiment by simulating the limit sets associated to the $(3,3,n)$ triangle groups. Since, up to conjugation and complex conjugation, there is a unique $\Delta(3,3,n;\theta_\infty)$ with $I_3I_2I_1I_2$ unipotent, the limit set is itself  unique up to translation  and complex conjugation.

 It will allow us to propose visual clues in order to distinguish the different $(3,3,n)$ triangle group fractals when $I_3I_2I_1I_2$ is unipotent.

The source code of the simulations and most of their results are available on the author's webpage~\cite{rph}. The code has been made open-source~\cite{rphgit}.

\subsection{Computing limit sets}

\begin{lemma}\label{lem-limset}
Let $\Gamma\subset \PU(n,1)$ be a subgroup. Let $\Gamma_L$ denote the subset of the loxodromic elements.
Suppose $\Gamma_L\neq \emptyset$.
Then the closure of the accumulation points for the  iteration dynamic:
\begin{equation}
\overline{\left\{x \, |\, \exists g\in \Gamma_L, \; \lim g^n x_0 = x\right\}} \text{ for any } x_0\in \mathbf H_{\mathbf C}^2,
\end{equation}
is equal to the full limit set $L(\Gamma)$.
\end{lemma}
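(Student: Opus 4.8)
The plan is to show two inclusions, namely that the set of iteration-dynamic accumulation points (call it $\mathcal A$, so that the statement claims $\overline{\mathcal A}=L(\Gamma)$) is contained in $L(\Gamma)$, and conversely that $L(\Gamma)$ is contained in $\overline{\mathcal A}$. The first inclusion is the routine direction. If $g\in\Gamma_L$ is loxodromic and $g^n x_0\to x$, then $x$ is an accumulation point of the orbit $\Gamma\cdot x_0$ lying on $\partial\mathbf H_\C^2$, hence $x\in L(\Gamma)$ by the very definition of the limit set given above (closure of an orbit intersected with the boundary). Since $L(\Gamma)$ is closed, $\overline{\mathcal A}\subset L(\Gamma)$ follows immediately.

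For the reverse inclusion I would first observe that $\mathcal A$ is nonempty: a loxodromic $g$ has an attracting fixed point $p_g^+$ on $\partial\mathbf H_\C^2$, and $g^n x_0\to p_g^+$ for every $x_0\in\mathbf H_\C^2$, so $p_g^+\in\mathcal A$. Moreover $\mathcal A$ contains both fixed points of every loxodromic element (taking $g$ or $g^{-1}$), so $\mathcal A$ has at least two points. The key structural fact I would exploit is part (2) of the first lemma on limit sets: if a compact $\Gamma$-invariant subset of $\partial\mathbf H_\C^2$ has at least two points, it contains $L(\Gamma)$. So it suffices to check that $\overline{\mathcal A}$ is compact (automatic, being a closed subset of the compact $\partial\mathbf H_\C^2$) and $\Gamma$-invariant. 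Invariance is the crux: for $\gamma\in\Gamma$ and $x=\lim g^n x_0\in\mathcal A$, continuity of $\gamma$ on $\overline{\mathbf H_\C^2}$ gives $\gamma x=\lim \gamma g^n x_0=\lim (\gamma g^n\gamma^{-1})(\gamma x_0)=\lim (\gamma g\gamma^{-1})^n(\gamma x_0)$, and $\gamma g\gamma^{-1}$ is again loxodromic with $\gamma x_0\in\mathbf H_\C^2$, so $\gamma x\in\mathcal A$. Thus $\mathcal A$ itself is $\Gamma$-invariant, hence so is its closure. Applying property (2) yields $L(\Gamma)\subset\overline{\mathcal A}$.

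I expect the main obstacle to be the clean verification that every point of $L(\Gamma)$ is genuinely approximated by the loxodromic iteration dynamic, rather than by general orbit accumulation; the argument above sidesteps this by routing through property (2) of the limit-set lemma instead of constructing explicit approximating sequences, which is why identifying that lemma as the right tool is the essential step. One subtlety worth addressing is the hypothesis $\Gamma_L\neq\emptyset$: it is exactly what guarantees $\mathcal A$ has at least two points so that property (2) applies, and without it the statement can fail (an elementary parabolic group has no loxodromics and $\mathcal A=\emptyset$ while $L(\Gamma)$ may be a single point). I would flag that the dependence of $\mathcal A$ on the basepoint $x_0$ is immaterial, since loxodromic dynamics send every interior point to the attracting fixed point, so the union over all $x_0$ and a single fixed $x_0$ produce the same attracting fixed points.
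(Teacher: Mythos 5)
Your proof is correct and follows essentially the same route as the paper: both directions rest on minimality of the limit set (property (2) of the limit-set lemma), with $\Gamma$-invariance obtained by conjugating the loxodromic element and noting that $\gamma g\gamma^{-1}$ is again loxodromic. In particular, your closing remark that the basepoint dependence is immaterial---since $\lim(\gamma g\gamma^{-1})^n(\gamma x_0)=\lim(\gamma g\gamma^{-1})^n x_0$ equals the attracting fixed point---is exactly the key step of the paper's argument.
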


\begin{proof}
Note that if $g$ is loxodromic then after any conjugation, $\gamma g \gamma^{-1}$ is again loxodromic. 

The set $A=\overline{\left\{x \, |\, \exists g\in \Gamma_L, \; \lim g^n x_0 = x\right\}}\subset\partial\mathbf H_\C^2$ is compact and has at least two points. By definition, $A\subset L(\Gamma)$. So we only need to show that $A$ is $\Gamma$-invariant, implying $L(\Gamma)\subset A$.

For any $\gamma\in \Gamma$, we have $\gamma (\lim g^n x_0 )= \lim (\gamma g \gamma^{-1})^n (\gamma x_0)$. But $\gamma g \gamma^{-1}$ is  loxodromic and its only attractive fixed point is the limit point for the orbits from both $x_0$ and $\gamma x_0$. Therefore $\lim (\gamma g \gamma^{-1})^n (\gamma x_0)=\lim (\gamma g \gamma^{-1})^n x_0 $ which shows the $\Gamma$-invariance. It implies $L(\Gamma)\subset A$ by minimality of $L(\Gamma)$.
\end{proof}

This lemma suggests a strategy to approximate $L(\Gamma)$: compute the attractive limit points of the loxodromic elements of $\Gamma$. 
However, this strategy requires to compute a very large number of elements $g\in \Gamma$. 
This can be done by  generating words of length $n$. If $\Gamma$ is described by two generators, then there are approximately $3^n$ words of length $n$. 

In practice, and this is particularly true with complex hyperbolic triangle groups, it is hard to get \emph{different} points from such a computation. 
One can often see large concentrations of points in tiny boxes and even many copies of the same point. 
This is partly due  to unknown relations between words, even at small length words.

Instead of only computing  words and getting their attractive limits, we used a second strategy in complement. When enough points are acquired, one can apply words on them (loxodromic or not) to get a better picture of the limit set. This method is much more efficient for it rarely makes redundant images. 
When nice symmetries are known (and for example, with complex hyperbolic triangles one knows the reflections $I_1$, $I_2$ and $I_3$), this allows a much better result. 

In practice, we first compute the attractive points of $n_1$-length words, then apply given symmetries on the set obtained, then apply $n_2$-length words  on them, and apply again symmetries. 

We show the different steps for two examples: one fractal  limit set and one dense limit set from representations of the fundamental group of m004 (the figure-eight knot's complement). See figures~\ref{fig-m004-1} and \ref{fig-m004-5} in the appendix.

\vskip10pt
Once this numerical approximation is done, we find the one giving fractals by looking if it is neither dense (high density of points) nor a smooth circle (easy to observe). The fractals we found are listed in the figures~\ref{fig-all-1} and~\ref{fig-all-2}.

\subsection{Complex hyperbolic $(3,3,n)$ triangle groups}

To compute the limit sets associated to $\PU(2,1)$-representations of $(3,3,n)$ triangle groups, we used our previous parametrization of the complex reflections $I_1$, $I_2$, and $I_3$. 
We picked the angular invariant for which $I_3I_2I_1I_2$ is unipotent.

For $n\in \{4,5,6,7\}$ we show three projections of the limit set and an additional diagram proposing a visual clue to recognize the limit set, see figures \ref{fig-2-1}, \ref{fig-2-2}, \ref{fig-2-3}, and \ref{fig-2-4} in the appendix. This visual clue consists in looking for a pair of symmetric spikes and inspect the middle. We count  the largest outer holes. When $n=4$ there is none, when $n=5$ there is one, when $n=6$ there are two, when $n=7$ there are three, etc.

\section{Morphisms and redundancy}\label{sec-4}

From the census of the boundary unipotent representations in~\cite{FKR} and stored in SnapPy~\cite{Ptolemy}, we numerically approximated all their limit sets. After a visual inspection, we kept the representations that gave fractals (see~\cite{rph} for the numerical results and the figures~\ref{fig-all-1} and~\ref{fig-all-2}).

Those representations come in pairs by complex conjugation of the coefficients. In this section, we study many of them by a classification into $(3,3,n)$ triangle groups. $19$ pairs will be directly classified and $4$ more will be  constructed. The classification will be systematic, following a method that we will  describe.

\paragraph{Notes on the selection}
It is already known that m004-1 and m004-3 are related by the composition of a figure-eight knot's symmetry  (see~\cite{DF}). Therefore we only classify one of the two with the systematic procedure.

A representation of m038 presents the characteristics of a $(3,4,4)$ complex hyperbolic triangle group. Indeed, m038 has such a representation  according to a preprint of Ma and Xie~\cite{MaXie} that the author has been able to consult.
A representation  of m137 presents the characteristics of a $(3,4,5)$ complex hyperbolic triangle group.

\vskip10pt
We  summarize the mathematical result  in the following theorem. It is fully independent from the numerical computations. The method of the proof  will be explained later.

Recall that $\Delta_2(3,3,n;\theta_\infty)$ denotes the even-length subgroup of $\Delta(3,3,n;\theta_\infty)$, where the angular invariant $\theta_\infty$ is chosen so that $I_3I_2I_1I_2$ is unipotent.

\begin{theorem}\label{thm-rest}
\textthmrest
\end{theorem}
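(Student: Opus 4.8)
The plan is to establish the theorem by exhibiting, for each manifold listed in the table, an explicit surjective morphism from its fundamental group onto the appropriate abstract group $\Lambda_2(3,3,n)$, and then promoting this to a boundary-unipotent representation into $\PU(2,1)$ via the triangle group $\Delta_2(3,3,n;\theta_\infty)$. As the ``Notes'' paragraph indicates, the logical skeleton has three layers. First, I would use the SnapPy/census presentation of each $\pi_1(M)$ (generators and relators, available as exact data) and write down a candidate assignment of the generators of $\pi_1(M)$ to words in $x,y$. Verifying that this assignment respects every relator of $\pi_1(M)$ and that the images of $x,y$ generate all of $\Lambda_2(3,3,n)$ is a finite, purely formal check: substitute and reduce in the finitely-presented group $\Lambda_2(3,3,n)=\langle x,y\mid x^3=y^3=(xy)^n=e\rangle$. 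This is the kind of ``proof by hand or by exact computation'' the Methodology paragraph promises, so I would organize it as a per-manifold verification rather than a single uniform argument.

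Second, I would invoke the surjectivity of $\Lambda_2(3,3,n)\to\Delta_2(3,3,n;\theta_\infty)$ (asserted in the excerpt) to obtain automatically a representation $\pi_1(M)\to\PU(2,1)$ with image exactly $\Delta_2(3,3,n;\theta_\infty)$. The one thing still requiring proof is that this representation is \emph{boundary unipotent}: each peripheral $\mathbf{Z}^2$ must land in a group generated by unipotent elements. Here the key structural fact, stated in the introduction, is that the conjugacy class can be chosen so the boundary holonomy is carried by $xy^{-1}\in\Lambda_2(3,3,n)$, and that $\theta_\infty$ is \emph{precisely} the angular invariant making $I_3I_2I_1I_2$ unipotent. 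Using the identification $x\mapsto ab$, $y\mapsto bc$ so that $xy^{-1}\mapsto abcb = I_1I_2I_3I_2$ (conjugate to $I_3I_2I_1I_2$), the defining property of $\theta_\infty$ gives that the boundary word maps to a unipotent element. So I would trace each peripheral curve through the morphism $\xi$, express its image in terms of $xy^{-1}$ (up to conjugacy and powers), and apply the trace computation $\tr(I_iI_jI_kI_j)=4\cos(\pi/r)^2-4\cos(\pi/r)\cos\theta$ together with the unipotence criterion $\cos\theta=\cos\frac\pi r-\frac{3}{4\cos\frac\pi r}$ derived in section~\ref{sec-2-triangle}.

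Third, for the \textbf{redundancy} and \textbf{uniformization} columns, the equality of images up to $\PU(2,1)$-conjugacy and complex conjugation follows immediately once all manifolds in a fixed row map onto the \emph{same} $\Delta_2(3,3,n;\theta_\infty)$, because Proposition~\ref{prop-unique-triangle} guarantees that for fixed $(3,3,n)$ there is a unique such triangle group up to conjugation and complex conjugation (the unipotence of $I_3I_2I_1I_2$ pins down $\theta_\infty$, hence the representation). The split into ``Uniformization'' versus ``Redundant'' is then not a consequence of the morphism data but is imported from Acosta's criterion, as the excerpt explains: I would cite~\cite{Acosta} to certify which single representative in each row is a spherical CR uniformization and record that the others, sharing the identical image group, cannot simultaneously be uniformizations of distinct manifolds.

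\emph{The main obstacle} I anticipate is not any single hard theorem but the bookkeeping of the second layer: correctly identifying, for each manifold and each cusp, which product of the chosen generator-words represents the peripheral curves, and confirming these land on conjugates of powers of $xy^{-1}$ rather than on some other word (the cautionary proposition~\ref{prop-m023} shows that for m023 the boundary is instead governed by the commutator $[x,y]$, which is \emph{never} unipotent, so the phenomenon genuinely fails in general). Thus the delicate step is the peripheral analysis: I must verify case by case that the peripheral structure is compatible with boundary unipotency, and this is exactly where the distinction between the manifolds in theorem~\ref{thm-rest} and the exceptional m023 is detected. The remaining algebraic checks, while numerous, are routine finite computations in finitely presented groups and in $\SU(2,1)$ matrices with entries in an explicit number field.
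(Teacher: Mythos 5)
Your proposal is correct and follows essentially the same route as the paper: construct explicit surjections $\pi_1(M)\to\Lambda_2(3,3,n)$ by per-manifold verification of the relators, compose with the surjection $\Lambda_2(3,3,n)\to\Delta_2(3,3,n;\theta_\infty)$, check case by case that the peripheral words land on conjugates of powers of $xy^{-1}$ (unipotent precisely at $\theta_\infty$), and import Acosta's theorem to single out the uniformized manifold in each row. The only ingredient you leave out is the paper's further identification of these constructed representations with the specific census representations via trace coordinates (proposition~\ref{prop-Lawton} and corollary~\ref{cor-identif}, using the words $w_x,w_y,w_{xy},w_{xy^{-1}},w_a,w_b$), but that step is not needed for the statement of theorem~\ref{thm-rest} itself.
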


\experimentaltable

The manifolds m039 and s000 do not appear in the census of~\cite{FKR} for there are respectively described  by five and six tetrahedra. But the result still applies and we will construct their triangular representations with boundary  unipotent holonomy from their fundamental groups. We found those manifolds by applying the theorem of Acosta (see below).

The manifold m053 will be treated differently than the other members of the census. The initial description in SnapPy of its two triangular representations m053-1 and m053-7 are quite hard to use. We proceed the computation of another description (with the help of the method {\tt Manifold.randomize()} of SnapPy).

Note that for each row, at most one representation is a uniformization of the corresponding manifold.  Deraux~\cite{Deraux} encountered the same phenomenon  with the manifolds m009 and m015.
In fact, we identify the uniformizations with the following result of Acosta.

\begin{theorem*}[\cite{Acosta}]
Let $4\leq n \leq \infty$. The manifold at infinity of $\mathbf H_{\mathbf C}^2 / \Delta_2(3,3,n;\theta_\infty)$ is the Dehn filling with slope $(1,n-3)$ of any cusp of the Whitehead link complement.
\end{theorem*}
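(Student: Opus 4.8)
The plan is to produce an explicit fundamental domain for the action of $\Delta_2(3,3,n;\theta_\infty)$ on its domain of discontinuity $U=\partial\mathbf H_\C^2\setminus L$, read off the side-pairings to obtain the topology of $U/\Delta_2$, and then identify this manifold with the stated Dehn filling of the Whitehead link complement. First I would fix concrete matrices: specializing the Hermitian form $H$ of section~\ref{sec-2-triangle} to $(p,q,r)=(3,3,n)$ with $\cos\theta_\infty=\cos\frac\pi n-\frac{3}{4\cos\frac\pi n}$ makes the reflections $I_1,I_2,I_3$ explicit and forces $u:=I_3I_2I_1I_2$ to be unipotent. The even subgroup is generated by $x=I_1I_2$ and $y=I_2I_3$, and by Theorem~\ref{thm-schwarz33n} it is discrete and faithful, so $U/\Delta_2$ is a genuine manifold whose cusps are the $\Delta_2$-orbits of parabolic fixed points of conjugates of $u$.

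Next I would build a Ford domain centred at the fixed point $p_\infty$ of $u$. In Heisenberg coordinates on $\partial\mathbf H_\C^2\setminus\{p_\infty\}$ the unipotent $u$ acts as a Heisenberg translation, and the candidate domain is the common exterior of the isometric spheres (bisectors) of those group elements that move $p_\infty$. The computational heart of the argument is to isolate finitely many such bisectors whose exterior, quotiented by the stabiliser of $p_\infty$, is a fundamental domain, and then to verify the hypotheses of the Poincaré polyhedron theorem: that the faces are paired by isometries in the group, that these pairings respect the combinatorics, and that the cycle (ridge) conditions around codimension-two faces are satisfied. This step simultaneously reconfirms the presentation $\langle x,y\mid x^3=y^3=(xy)^n=e\rangle$ and exhibits $U/\Delta_2$ as a polyhedron with paired faces.

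With the fundamental domain in hand I would extract the topology. A horospherical slice at $p_\infty$ descends to a torus cross-section, and tracking which primitive class on this torus is capped off by the side-pairings gives the filling slope; comparison with the standard cusp framing of the Whitehead link complement is what should yield the value $(1,n-3)$, with $n=\infty$ appearing as the unfilled limit. I would then identify $U/\Delta_2$ with the Dehn filling itself, most directly by matching the fundamental-group presentation coming from the face identifications against the presentation of the filled Whitehead link group, and upgrading this to a homeomorphism via the triangulation induced by the polyhedral decomposition.

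The main obstacle is the combination of the fundamental-domain construction and the final topological identification. Because bisectors in $\mathbf H_\C^2$ are not totally geodesic and their pairwise intersections are genuinely curved two-dimensional sets, certifying that the chosen bisectors tessellate---and in particular checking every ridge cycle in the Poincaré polyhedron theorem---is where essentially all of the difficulty concentrates. Pinning down the precise slope $(1,n-3)$ from the cusp combinatorics, and confirming that the manifold one glues up is genuinely this Dehn filling of the Whitehead link rather than merely a manifold with the same fundamental group, would require either an explicit homeomorphism of triangulations or an appeal to geometrization together with a normal-form computation.
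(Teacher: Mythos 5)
Be aware that the paper does not prove this statement at all: it is imported verbatim from Acosta \cite{Acosta}, and the only indication given of its proof is the remark that ``Acosta did construct the CR structures by CR surgery.'' So the relevant comparison is with Acosta's actual argument, which is genuinely different from yours. Acosta starts from the Parker--Will spherical CR uniformization of the Whitehead link complement by $\Delta_2(3,3,\infty;\theta_\infty)$ \cite{ParkerWill} and deforms it inside the family of $(3,3,n)$ triangle groups; his spherical CR Dehn surgery theorem then shows that when the holonomy of one cusp becomes elliptic of finite order (the element $xy$ acquiring order $n$), the structure extends to the corresponding Dehn filling, whose slope is computed from the deformed boundary holonomy. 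This buys uniformity in $n$: one surgery argument covers all $4\leq n\leq\infty$ at once, with no fundamental-domain combinatorics. Your plan --- Ford domain centred at the fixed point of the unipotent $u=I_3I_2I_1I_2$, Poincar\'e polyhedron theorem, then identification of the quotient --- is instead the strategy by which the individual cases were historically settled ($n=\infty$ by Parker--Will \cite{ParkerWill}, the figure-eight case by \cite{DF}, m009/m015 by Deraux \cite{Deraux}), and it is legitimate in principle; but as written it is a programme rather than a proof, since the ridge-cycle verification you correctly identify as the crux would have to be carried out separately for each $n$, and nothing in your outline suggests how to do this uniformly.

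One concrete misplacement in your third paragraph: the filling slope $(1,n-3)$ is not read off at the horospherical torus of the surviving cusp $p_\infty$, where nothing is capped off --- that cusp persists in the filled manifold, which is still one-cusped. The slope lives at the \emph{other} cusp of the Whitehead link complement, the one whose peripheral holonomy degenerates to the elliptic element $xy$ of order $n$; the solid torus glued in by the filling appears in $U/\Delta_2$ as a neighbourhood of (the boundary trace of) the fixed-point locus of that elliptic element, and the slope is determined by which peripheral class becomes trivial, equivalently by the rotation angle $2\pi/n$. Relatedly, for finite $n$ you must check that the torsion elements of $\Delta_2(3,3,n;\theta_\infty)$ act freely on $U$ (regular elliptic elements have no fixed points on $\partial\mathbf H_\C^2$, but this needs verifying for the actual elliptic conjugacy classes), otherwise $U/\Delta_2$ is a priori only an orbifold. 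Your final fallback --- matching fundamental groups and invoking geometrization --- does close the identification step, since the fillings in question are one-cusped hyperbolic manifolds and Mostow rigidity applies, but the slope computation itself must be done at the filled cusp as above.
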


With SnapPy, it is possible to compute Dehn surgeries on the Whitehead link complement. Note that m129 is the Whitehead link complement in our table. One has to be careful: the marking of the peripheral holonomy is not unique, with SnapPy one needs to call the manifold $5^2_1$ and fill a cusp with the meridian equal to $n-3$ and the longitude equal to $1$. 

This procedure gives the selection of the uniformized manifold along each row.  Note that we do have a true CR uniformization since Acosta did construct the CR structures by CR surgery. (It is not only a topological result.)

The first uniformization of m129 (the Whitehead link complement) was  shown by Schwartz~\cite{Schwartz2}, but the present uniformization by a $(3,3,\infty)$ triangle group with boundary unipotent holonomy was studied by Parker and Will~\cite{ParkerWill}.

\vskip10pt
We now explain how we prove the rest of theorem  \ref{thm-rest}. We start with the following fundamental result about $\PU(2,1)$ subgroups generated by two elements. It is the restricted case of a more general result in ${\rm SL}_3(\C)$.

\begin{proposition}[\cite{Lawton,Will09}]\label{prop-Lawton}
If $\Gamma$ is a group generated by two elements $a$ and $b$, then any irreducible representation $\xi\colon\Gamma\to\PU(2,1)$ is fully determined up to conjugation in $\PU(2,1)$  by $\tr(\xi(a)),\tr(\xi(b)),\tr(\xi(ab)),\tr(\xi(ab^{-1}))$ and $\tr(\xi([a,b]))$.
\end{proposition}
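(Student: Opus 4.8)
The plan is to deduce the statement from the description of the $\mathrm{SL}_3(\C)$ character variety of the free group $F_2$ on two generators, due to Lawton, and then to transfer the resulting conjugacy from $\mathrm{SL}_3(\C)$ down to $\PU(2,1)$. First I would lift $\xi$ to a representation $\tilde\xi\colon\Gamma\to\SU(2,1)\subset\mathrm{SL}_3(\C)$ (a lift exists because $\SU(2,1)\to\PU(2,1)$ has kernel the finite center $\{I,\omega I,\omega^2 I\}$, $\omega=e^{2\pi\ii/3}$), keeping in mind that each trace is then well-defined only up to a cube root of unity; this ambiguity acts coherently on the five traces and does not affect the $\PU(2,1)$-conjugacy class, so it can be disposed of at the end. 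Since $\Gamma$ is generated by $a$ and $b$, the representation factors through $F_2$, and Lawton's theorem asserts that the ring of $\mathrm{SL}_3(\C)$-invariant functions on pairs of matrices is generated by the nine traces $\tr(a),\tr(a^{-1}),\tr(b),\tr(b^{-1}),\tr(ab),\tr(a^{-1}b^{-1}),\tr(ab^{-1}),\tr(a^{-1}b),\tr([a,b])$. Consequently two irreducible representations sharing all nine trace values lie in the same closed $\mathrm{SL}_3(\C)$-orbit, hence are conjugate in $\mathrm{SL}_3(\C)$.

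Next I would use unitarity to cut the nine invariants down to the five listed. For $g\in\SU(2,1)$ one has $g^{-1}=Jg^{*}J$ with $J$ the Hermitian form of signature $(2,1)$, whence $\tr(g^{-1})=\overline{\tr(g)}$. Applying this to $g=a,b,ab,ab^{-1}$ gives $\tr(a^{-1})=\overline{\tr(a)}$, $\tr(b^{-1})=\overline{\tr(b)}$, $\tr(a^{-1}b^{-1})=\overline{\tr(ab)}$ and $\tr(a^{-1}b)=\overline{\tr(ab^{-1})}$, so the four complex numbers $\tr(a),\tr(b),\tr(ab),\tr(ab^{-1})$ already account for eight of Lawton's nine generators, the ninth being exactly the prescribed $\tr([a,b])$. (Lawton's single relation expresses $\tr([a,b])$ as a root of a quadratic whose coefficients are polynomials in the other eight traces, which is precisely why the commutator trace must be prescribed: it selects one of the two otherwise indistinguishable orbits.) Thus the five traces determine all nine $\mathrm{SL}_3(\C)$-invariants, and hence, by the previous paragraph, the $\mathrm{SL}_3(\C)$-conjugacy class of $\tilde\xi$.

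The hard part will be descending this conjugacy from $\mathrm{SL}_3(\C)$ to $\PU(2,1)$, and this is where irreducibility becomes indispensable. Suppose $\tilde\xi_1$ and $\tilde\xi_2$ are irreducible lifts with the same five traces; by the above there is $P\in\mathrm{SL}_3(\C)$ with $\tilde\xi_2=P\tilde\xi_1P^{-1}$. Both preserve $J$, so $\tilde\xi_1$ preserves both $J$ and $P^{*}JP$. By Schur's lemma an irreducible subgroup of $\mathrm{SL}_3(\C)$ admits at most one invariant Hermitian form up to a real scalar, so $P^{*}JP=\lambda J$ for some $\lambda\in\R^{\times}$; comparing signatures forces $\lambda>0$, and rescaling $P$ by $\lambda^{-1/2}$ places it in $\rU(2,1)$, after which adjusting by a unit scalar to restore unit determinant yields $P\in\SU(2,1)$. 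Projecting to $\PU(2,1)$ produces the required conjugacy and at the same time absorbs the cube-root-of-unity ambiguity from the initial lift. I expect this Schur-type uniqueness of the invariant Hermitian form to be the crux of the argument: without irreducibility the intertwiner $P$ need not respect the form, and the five traces would no longer pin down the class.
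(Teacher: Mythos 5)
Your argument is correct and is, in substance, the proof that the paper delegates to its citations: the paper gives no proof of proposition~\ref{prop-Lawton} at all, stating it with references to Lawton and Will and adding only the remark that $\tr(\xi([a,b]))$ is a root of a real quadratic whose two roots are equal or complex conjugate --- a fact your parenthetical about Lawton's single relation recovers exactly. Your three steps (Lawton's nine trace generators for the invariant ring of pairs in $\mathrm{SL}_3(\C)$, the unitarity identity $\tr(g^{-1})=\overline{\tr(g)}$ for $g\in\SU(2,1)$ cutting the nine invariants down to the five listed, and the Schur-lemma uniqueness of the invariant Hermitian form to push the $\mathrm{SL}_3(\C)$-conjugation into $\rU(2,1)$ and then $\PU(2,1)$) are precisely the contents of the cited references, so this counts as the same route rather than a new one. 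One justification should be repaired, though it does not damage the proof: the existence of a lift $\tilde\xi\colon\Gamma\to\SU(2,1)$ does \emph{not} follow from the kernel of $\SU(2,1)\to\PU(2,1)$ being finite and central --- for a group with relations the obstruction lives in $H^2(\Gamma;\Z/3\Z)$ and can be nonzero. The correct fix, which your write-up already half-contains, is to pull $\xi$ back along the surjection $F_2\twoheadrightarrow\Gamma$ and lift the free group representation by choosing arbitrary preimages of the two generators in $\SU(2,1)$; this suffices for the statement, since the five traces are only well defined through such a choice (up to the coherent cube-root action you describe), and a matrix conjugating one $F_2$-lift to another conjugates the underlying $\PU(2,1)$-representations of $\Gamma$.
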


In fact, in $\PU(2,1)$, $\tr(\xi([a,b]))$ is a root of a second degree polynomial with real coefficients and the two roots are  either equal or complex conjugates.

With
\begin{align}
\Lambda_2(3,3,n) = \left\langle x,y \; \middle\vert \; x^3=y^3=(xy)^n=e\right\rangle,
\end{align}
recall that we have a surjective morphism $\Lambda_2(3,3,n)\to\Delta_2(3,3,n;\theta_\infty)$ given by $(x,y)\mapsto (I_1I_2,I_2I_3)$. The traces of the images of $x,y,xy$ and $xy^{-1}$ are all real and prescribed by the data $(3,3,n)$. (When $n=\infty$, we send $xy$ to a unipotent transformation.) Therefore, by doing a complex conjugation on the representation, we only conjugate the trace of the image of $[x,y]$.

\begin{corollary}\label{cor-identif}
Let $\pi_1(M)$ be a fundamental group generated by two elements $a$ and $b$. 
Let  $\xi\colon\pi_1(M)\to \PU(2,1)$ be a representation.
Let $\rho\colon\pi_1(M)\to\Lambda_2(3,3,n)$ be a surjective morphism and 
 $\gamma\colon\pi_1(M)\to\Delta_2(3,3,n;\theta_\infty)$ be the composition of $\rho$ with $\Lambda_2(3,3,n)\to\Delta_2(3,3,n;\theta_\infty)$.
 
 Let  $w_x,w_y,w_{xy},w_{xy^{-1}}\in\pi_1(M)$ be  such that $\rho(w_x)=x,\rho(w_y)=y$ and $w_{xy}=w_xw_y$, $w_{xy^{-1}}=w_xw_y^{-1}$.
We make the following hypotheses:
\begin{enumerate}
\item The traces $\tr\circ\xi$ and $\tr\circ \gamma$ coincide on the words $w_x,w_y,w_{xy}$ and $w_{xy^{-1}}$.
\item There exists $w_a,w_b\in\langle w_x,w_y\rangle\subset\pi_1(M)$ such that $\xi(w_a)=\xi(a)$, $\xi(w_b)=\xi(b)$ and  $\gamma(w_a)=\gamma(a)$, $\gamma(w_b)=\gamma(b)$.
\end{enumerate}

Then $\gamma$ and $\xi$ are equal up to conjugation and  complex conjugation.
\end{corollary}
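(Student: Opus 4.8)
The plan is to reduce Corollary~\ref{cor-identif} to the already-established Proposition~\ref{prop-Lawton}. Proposition~\ref{prop-Lawton} says that an irreducible two-generator representation into $\PU(2,1)$ is determined up to conjugation by the five traces $\tr(\xi(a)),\tr(\xi(b)),\tr(\xi(ab)),\tr(\xi(ab^{-1}))$ and $\tr(\xi([a,b]))$. So the entire task is to verify that $\xi$ and $\gamma$ agree on all five of these trace functionals (after possibly applying a complex conjugation to one of them), and then invoke the proposition directly.

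\medskip

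First I would note that hypothesis~(2) lets me transfer the computation from the generators $a,b$ of $\pi_1(M)$ to the words $w_x,w_y$ lying inside the subgroup $\langle w_x,w_y\rangle$. Indeed, since $\xi(w_a)=\xi(a)$ and $\xi(w_b)=\xi(b)$ (and likewise for $\gamma$), the pair $(\xi(w_a),\xi(w_b))$ generates the same image group as $(\xi(a),\xi(b))$, and the five relevant traces of $\xi$ are unchanged whether computed on $\{a,b\}$ or on $\{w_a,w_b\}$; the same holds for $\gamma$. In effect, hypothesis~(2) guarantees that it suffices to match the two representations on the single two-generator pair $(w_x,w_y)$, whose images under $\rho$ are precisely $x$ and $y$. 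This is the step that makes the whole reduction possible, because $x,y$ are the generators on which I actually control the traces.

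\medskip

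Next I would dispatch the four \emph{real} traces. By hypothesis~(1), $\tr\circ\xi$ and $\tr\circ\gamma$ already coincide on $w_x,w_y,w_{xy}=w_xw_y$ and $w_{xy^{-1}}=w_xw_y^{-1}$. Because $\gamma$ factors through $\rho$, the $\gamma$-traces of these four words are exactly $\tr(I_1I_2),\tr(I_2I_3),\tr((I_1I_2)(I_2I_3))=\tr(I_1I_3)$ and $\tr((I_1I_2)(I_2I_3)^{-1})$, which — as the discussion preceding the corollary records — are all real and prescribed by the data $(3,3,n)$. Matching traces on $w_x,w_y,w_{xy},w_{xy^{-1}}$ therefore settles four of the five Lawton invariants for the pair $(w_x,w_y)$. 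The only remaining invariant is $\tr(\xi([w_x,w_y]))$ versus $\tr(\gamma([w_x,w_y]))$.

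\medskip

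\textbf{The hard part} — or rather, the step requiring the conjugation trick — is the commutator trace. As remarked after Proposition~\ref{prop-Lawton}, in $\PU(2,1)$ the value $\tr(\xi([x,y]))$ is a root of a real quadratic whose two roots are equal or complex-conjugate; once the four real traces are fixed, the commutator trace is pinned down only up to complex conjugation. The $\gamma$-commutator trace is one of the two roots, and $\tr(\xi([w_x,w_y]))$ must be a root of the \emph{same} quadratic, since that quadratic's coefficients are real polynomials in the four traces already shown to agree. Hence $\tr(\xi([w_x,w_y]))$ equals either $\tr(\gamma([w_x,w_y]))$ or its complex conjugate. In the first case all five invariants agree and Proposition~\ref{prop-Lawton} gives $\xi$ conjugate to $\gamma$; in the second case, replacing $\gamma$ by its complex conjugate $\overline{\gamma}$ (which fixes the four real traces and conjugates the commutator trace, exactly as noted for $\Delta_2(3,3,n;\theta_\infty)$) brings all five invariants into agreement, so $\xi$ is conjugate to $\overline{\gamma}$. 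Either way $\xi$ and $\gamma$ coincide up to conjugation and complex conjugation, which is the claim. One must only check that $\xi$ is irreducible so Proposition~\ref{prop-Lawton} applies; this follows because its image contains the non-elementary triangle group $\Delta_2(3,3,n;\theta_\infty)$, which is Zariski-dense and in particular irreducible.
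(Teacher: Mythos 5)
Your proposal follows exactly the paper's two-step route: first apply proposition~\ref{prop-Lawton} to the restrictions of $\xi$ and $\gamma$ to the two-generator subgroup $\langle w_x,w_y\rangle$, where hypothesis~(1) fixes the four traces (real, since prescribed by the triangle data), and the fifth, the commutator trace, is a root of a real quadratic in the first four, hence determined up to complex conjugation — which is absorbed by complex-conjugating the representation, as this fixes the four real traces; second, after normalizing so that $\xi=\gamma$ on $\langle w_x,w_y\rangle$, extend to all of $\pi_1(M)$ via hypothesis~(2) through $\xi(a)=\xi(w_a)=\gamma(w_a)=\gamma(a)$ and likewise for $b$. This is precisely the paper's proof, written out in more detail. (Your opening remark that the five traces of $\xi$ are unchanged whether computed on $\{a,b\}$ or $\{w_a,w_b\}$ is harmless but not what is needed; the operative use of hypothesis~(2) is only the pointwise extension just described, which you do state.)

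One step as written is circular: you justify the irreducibility required by proposition~\ref{prop-Lawton} by asserting that the image of $\xi$ contains $\Delta_2(3,3,n;\theta_\infty)$ — but that containment is, up to conjugation and complex conjugation, exactly the conclusion of corollary~\ref{cor-identif}; nothing in the hypotheses gives it a priori, since $\xi$ is an arbitrary representation into $\PU(2,1)$. The representation whose image you control is $\gamma$: because $\rho(w_x)=x$ and $\rho(w_y)=y$ generate $\Lambda_2(3,3,n)$ and $\Lambda_2(3,3,n)\to\Delta_2(3,3,n;\theta_\infty)$ is surjective, the restriction $\gamma|_{\langle w_x,w_y\rangle}$ has image the full triangle group and is irreducible. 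Irreducibility of $\xi|_{\langle w_x,w_y\rangle}$ then need not be assumed: the five trace coordinates determine the character of a two-generator representation, two representations with equal characters have conjugate semisimplifications, and a representation whose semisimplification is conjugate to the irreducible $\gamma|_{\langle w_x,w_y\rangle}$ is itself irreducible. With that repair (which, to be fair, the paper's own terse proof leaves entirely implicit), your argument is complete and coincides with the paper's.
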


\begin{proof}
Given the notations, the first assumption shows that $\xi$ and $\gamma$ are equal up to complex conjugation and conjugation on $\langle w_x,w_y\rangle\subset\pi_1(M)$.

Up to the conjugations, assume  that $\xi=\gamma$ on $\langle w_x,w_y\rangle$. Then by the second hypothesis $\xi(a)=\xi(w_a)=\gamma(w_a)=\gamma(a)$ and $\xi(b)=\xi(w_b)=\gamma(w_b)=\gamma(b)$. Therefore, $\xi=\gamma$ on $\pi_1(M)$.
\end{proof}

Recall that the trace of a transformation $w$ determines its type.
If a transformation $w$ is elliptic with order $k$, then $\tr(w)=4\cos(\pi/k)^2-1$. The transformation $w$ is unipotent if, and only if, $\tr(w)=3$ and $w\neq {\rm id}$.

\paragraph{The method}This is how we will identify all our boundary unipotent representations. An application of this method is detailed in the next section.
\begin{enumerate}
\item We describe the relation of $\pi_1(M)$ as consequence of several simpler relations “of triangle type”. Those simpler relations are verified by the representation in the census (but not by $\pi_1(M)$). 

For example, $a^2ba^{-1}b^{-2}a^{-1}ba$ is implied by $a^4$, $b^3$ and $(a^{-1}b)^3$.
It is  the most challenging step and also the most important. It is in general very difficult to find   such simpler relations.

This step is inscribed in the first table. The “id” column allows us to recover the representation from SnapPy. Relations can be checked with the SageMath code in the appendix.
\item We construct the surjective morphism $\rho$ and recover the values of the various words $w_{x},w_y,w_{xy}$ and $w_{xy^{-1}}$.  We use the previous relations “of triangle type” to check that we have a morphism. This step is inscribed in the second and third tables. It is a completely autonomous step from all the computations. It only involves $\pi_1(M)$ and by hand  computations.
\item We find values of $w_a$ and $w_b$ and  it can be checked by hand that they verify the required equalities for both $\xi$ and $\gamma$. Those words are inscribed in the third table.
\item We verify that the traces of the four elements match with the evidence of the first step and by exact computations, \emph{e.g.}  for $w_{xy^{-1}}$ (see the SageMath code in the appendix for this).
\end{enumerate}

Those four steps prove the identification we are looking for. But we add a last table with the description of the peripheral holonomy. The reason is that one can only consider the second step (doable by hand) and with the last table show that the morphism constructed has boundary unipotent holonomy, independently from the census. (One should play with various the conjugations such as $y(xy^{-1})y^{-1} = yxy$ and $y^{-1}(xy^{-1})y=y^{-1}x$ in $\Lambda_2(3,3,n)$.)

For the construction of this last table, we only use the relations described in the first step and implied by the description of the morphism. 

\begin{proof}[Proof of theorem \ref{thm-rest}]
Let $M$ be a manifold in the table and $\Delta_2(3,3,n;\theta_\infty)$ an assigned group. By the second step, we construct a morphism $\rho\colon\pi_1(M) \to \Lambda_2(3,3,n)$. This morphism gives a representation $\xi$ by the composition $\pi_1(M)\to\Lambda_2(3,3,n)\to\Delta_2(3,3,n;\theta_\infty)$.
Finally, $\xi$ has boundary unipotent holonomy by the last table.
\end{proof}

\paragraph{On the notations}We keep the notations of the corollary and write $A,B,X,Y$ instead of $a^{-1},b^{-1},x^{-1},y^{-1}$ respectively.

\subsection{$\Lambda_2(3, 3, 4)$ -- m004, m022, m029, m034, m081 and m117}\label{subsec-334}

We survey the method for m004-1. We denote this representation by $\xi\colon\pi_1({\rm m004})\to\PU(2,1)$. By SnapPy, the fundamental group of m004 is presented by the generators $a$ and $b$ and the relation $a^2bAB^2Aba$. 

In the census, $\xi$ is a representation verifying (by exact computation) the relations $\xi(a)^4=e$, $\xi(b)^3=e$ and $\xi(Ab)^3=e$. Those three relations  alone imply the one of the fundamental group: $(a^2)bA(B^2)Aba = (AA)bA(b)Aba= A (Ab)^3 a =e$.

Those first datas are reported in the first table.
They suggest  that we should seek for a $(3,3,4)$ triangle group representation.

\vskip10pt
We define a new morphism $\rho\colon\pi_1({\rm m004})\to\Lambda_2(3,3,4)$ by $\rho(a)= xy$ and $\rho(b)= y$. To check that this is indeed a morphism, it suffices to check the relations $a^4,b^3,(Ab)^3$. And that is the case: in $\Lambda_2(3,3,4)$, $xy$ has order $4$, $y$ has order $3$ and $Ab$ is sent on $Yxy$ that has order $3$ since $x$ has order $3$. This is given in the second table.

With the data of the third table, we show that we have constructed a \emph{surjective} morphism $\rho\colon\pi_1({\rm m004})\to\Lambda_2(3,3,4)$. Since $\Lambda_2(3,3,4)$ is generated by $x$ and $y$, it suffices to give antecedents to both. The word $w_x=aB$ in $\pi_1({\rm m004})$ is sent to $xyY=x$ and the word $w_y=b$ is sent to $y$.

\vskip10pt
Now we prove that $\gamma$ (which is the composition of $\rho$ with $\Lambda_2(3,3,4)\to\Delta_2(3,3,4;\theta_\infty)$) and $\xi$ have equal images up to conjugation and complex conjugation.
According to corollary~\ref{cor-identif}, it suffices to find the words $w_{x},w_y,w_{xy},w_{xy^{-1}},w_a$ and $w_b$. This is all given in the third table and verifications are straightforward.

The traces of $w_x,w_y,w_{xy}$ will coincide since they verify the triangular relations in both representation $\gamma$ and $\xi$. With the word $w_{xy^{-1}}$ we check by formal computation that the trace under $\xi$ is indeed $3$.

\vskip10pt
If one does not want to involve the representation $\xi$ of the census, but only construct the morphism $\rho$ \emph{a priori}, the last table allows us to check that the boundary is unipotent under $\rho$ composed with the usual map $\Lambda_2(3,3,4)\to\Delta_2(3,3,4;\theta_\infty)$. Here, the boundary  is described by the two words $ab$ and $aBAbABab$. Under $\rho$, we have $\rho(aBAbABab) = \rho(ab)^3$ and $\rho(ab)=xyy=xY$ that is indeed mapped to a unipotent element of $\PU(2,1)$.

\begin{center}
\begin{tabular}{|ccc|c|}
\hline
 & id & $\pi_1$ relation&  Relations\\
\hline
m004-1 & [0,0] & $a^2bAB^2Aba$ &  $a^4$, $b^3$, $(Ab)^3$
\\
m022-1 & [0,0] & $ab^5abA^2b$ &  $a^3$, $b^4$, $(ab)^3$
\\
m029-1 & [0,0] & $aBab^3A^2b^3$ &  $a^3$, $b^4$, $(aB)^3$
\\
m034-1 & [0,0] & $a^3b^2ABAb^2$  &  $a^4$, $b^3$, $(AB)^3$
\\
m081-1 & [0,0] & $ab^3aBa^4B$ & $a^3$, $b^4$, $(aB)^3$ 
\\
m117-1 & [0,0] & $a^2b^2a^2b^2ABAb^2$ &  $a^3$, $b^3$  $(AB)^4$
\\
\hline
\end{tabular}

\begin{tabular}{|c|cc|}
\hline
 &  $a$ & $b$  \\
\hline
m004 & $xy$ & $y$ 
\\
m022 & $y$ & $(xy)^{-1}$ 
\\
m029 & $y$ & $xy$ 
\\
m034 & $(xy)^{-1}$ & $x$ 
\\
m081 & $x^{-1}$ & $(xy)^{-1}$ 
\\
m117 & $x^{-1}$ & $y^{-1}$  
\\
\hline
\end{tabular}

\begin{tabular}{|c|cccc|cc|}
\hline
 & $w_x$ &$w_y$ &$w_{xy}$ & $w_{xy^{-1}}$ &$w_a$ & $w_b$ \\
\hline
m004 
& $aB$ & $b$ & $a$ & $aB^2$ &$w_{xy}$ & $w_y$
\\
m022 
&  $BA$ & $a$ & $B$ & $BA^2$ & $w_y$ &$w_{xy}^{-1}$
\\
m029 
& $bA$ & $a$ & $b$ & $bA^2$ &$w_y$ & $w_{xy}$
\\
m034 
& $b$ & $BA$ & $A$ & $bab$ & $w_{xy}^{-1}$ &$w_x$
\\
m081 
& $A$ & $aB$ &$B$& $Ba^2$ & $w_x^{-1}$ & $w_{xy}^{-1}$
\\
m117 
&  $A$ & $B$ & $AB$ & $Ab$ &$w_x^{-1}$ & $w_y^{-1}$
\\
\hline
\end{tabular}

\begin{tabular}{|c|cc|}
\hline
&Peripheral& holonomy\\
\hline
m004 & $ab$ & $aBAbABab\equiv (ab)^3$
\\
m022 & $Ba$ & $A^2babA\equiv Ba$
\\
m029 & $ab^2$ & $bA^3b^3\equiv e$
\\
m034 & $b^2a^2\equiv BA^2$ & $A^3B^3A\equiv e$
\\
m081 & $b^2a\equiv B^2a$ &  $Ba^3Ba\equiv B^2a$
\\
m117 & $bA$ & $BA^3BA\equiv bA$
\\
\hline
\end{tabular}
\end{center}

\subsection{$\Lambda_2(3, 3, 5)$ -- m009, m015, m035, m142 and m146}

\begin{center}
\begin{tabular}{|ccc|c|}
\hline
 & id & $\pi_1$ relation &  Relations\\
\hline
m009-1 & [0,1] &$ a^2bABa^2BAb$
& $a^5$ ,  $(a^2B)^3$, $(a^2b)^3$
\\
m015-2 & [0,1]& $ab^2A^2b^2aB^3$
 & $a^3$, $b^5$, $(abb)^3$
\\
m035-5 & [0,3]& $ab^3A^2b^3aB^2$
 & $a^3$, $b^5$, $(aBB)^3$
\\
m142-1 & [0,0]&$ab^2aBab^2aBa^4B$
 &  $a^3$, $b^3$, $(aB)^5$
\\
m146-3 & [0,3] & $a^2b^2a^3b^2a^2BAB$
& $a^3$, $b^5$, $(AB)^3$
\\
\hline
\end{tabular}

\begin{tabular}{|c|cc|}
\hline
 &  $a$ & $b$  \\
\hline
m009 & $xy$ & $(YX)^{2}Y$ 
\\
m015 & $Y$ & $(YX)^2$
\\
m035 & $y$ & $(YX)^2$
\\
m142 & $YXy$ & $y$  
\\
m146 & $y$ & $YX$ 
\\
\hline
\end{tabular}

\begin{tabular}{|c|cccc|cc|}
\hline
  & $w_x$ &$w_y$ &$w_{xy}$ & $w_{xy^{-1}}$ &$w_a$ & $w_b$\\
\hline
m009 
& $a^3b$ & $BA^2$ & $a$ & $a^3ba^2b$
& $w_{xy}$ & $w_{xy}^{-2}w_y^{-1}$
\\
m015 
& $b^2a$ & $A$ & $b^2$ & $b^2A$
&$w_y^{-1}$ & $w_{xy}^{-2}$
\\
m035 
&$b^2A$ & $a$ & $b^2$ &$b^2a$ 
&$w_y$ & $w_{xy}^{-2}$
\\
m142
& $bAB$ & $b$ & $bA$ &$bAB^2$
& $w_{xy}^{-1}w_y$ & $w_y$ 
\\
m146 
& $BA$ & $a$ & $B$ & $BA^2$
& $w_y$ & $ w_{xy}^{-1}$
\\
\hline
\end{tabular}

\begin{tabular}{|c|cc|}
\hline
 & Peripheral & holonomy  \\
\hline
m009 & $ab$ &  $ABa^3BAb\equiv (ab)^2$ 
\\
\hline
m015 & $bA$ &  $ab^2A^3b^2\equiv (bA)^{-1}$
\\
\hline
m035 & $ab$ & $ab^3A^3b^3\equiv ab$
\\
\hline
m142 & $BA$ & $bA^3bA\equiv BA$
\\
\hline
m146 & $ba^2\equiv bA$ &  $BABAB^2\equiv aB$
\\
\hline
\end{tabular}
\end{center}

\subsection{$\Lambda_2(3, 3, 6)$ -- m023}

\begin{center}
\begin{tabular}{|ccc|c|}
\hline
 & id & $\pi_1$ relation & Relations\\
m023-1 & [0,0]  & $aBAb^2ABab^3$ &  $b^6$, $(Abb)^3$, $(aB^3)^3$
\\
\hline
\end{tabular}

\begin{tabular}{|c|cc|}
\hline
 &  $a$ & $b$  \\
\hline
m023 & $(YX)^{2}Y^2X$  & $xy$
\\
\hline
\end{tabular}

\begin{tabular}{|c|cccc|cc|}
\hline
 & $w_x$ & $w_y$ & $w_{xy}$ & $w_{xy^{-1}}$&$w_a$&$w_b$ \\
\hline
m023 & $b^3ab$  & $BAB^2$ & $b$ & $b^3ab^3ab$
& $w_{xy}^{-3}w_{x}w_{xy}^{-1}$& $w_{xy}$
\\
\hline
\end{tabular}

\begin{tabular}{|c|cc|}
\hline
 &  Peripheral & holonomy \\
\hline
m023  & $bab$  & $b^2aBABab^2 \equiv (BAB)^2$
\\ 
\hline
\end{tabular}
\end{center}

\subsection{$\Lambda_2(3, 3, 7)$ -- m032 and m045}

\begin{center}
\begin{tabular}{|ccc|c|}
\hline
 & id &$\pi_1$ relation & Relations\\
\hline
m032-7 & [0,2] &  $a^2B^2Ab^5AB^2$ & $a^3$, $b^7$, $(AB^2)^3$
\\
m045-8 & [0,4] & $a^3b^2a^3BA^4B$ &  $a^7$, $b^3$, $(a^3B)^3$
\\
\hline
\end{tabular}

\begin{tabular}{|c|cc|}
\hline
 &  $a$ & $b$\\
\hline
m032 & $x$ & $(xy)^3$  
\\
m045 & $(xy)^2$  & $X$
\\
\hline
\end{tabular}

\begin{tabular}{|c|cccc|cc|}
\hline
& $w_x$ & $w_y$ & $w_{xy}$ & $w_{xy^{-1}}$&$w_a$&$w_b$\\
\hline
m032 
& $a$ & $AB^2$ & $B^2$ & $ab^2a$
& $w_x$ & $w_{xy}^3$
\\
m045 
& $B$ & $bA^3$ & $A^3$ & $Ba^3B$
& $w_{xy}^2$ & $w_{x}^{-1}$
\\
\hline
\end{tabular}

\begin{tabular}{|c|cc|}
\hline
 &  Peripheral  & holonomy \\
\hline
m032 & $B^3a$ & $b^2A^3b^2a \equiv B^3a$
\\
\hline
m045 & $AB$  & $bA^3B^3A^3 \equiv ba $
\\
\hline
\end{tabular}
\end{center}

\subsection{$\Lambda_2(3, 3, \infty)$ -- m129 and m203}

With the complex hyperbolic triangle group $\Delta_2(3,3,\infty;\theta_\infty)$ we ask $I_1I_3$ to be unipotent: we  send $xy$ to a unipotent transformation.

\begin{center}
\begin{tabular}{|ccc|c|}
\hline
 & id & $\pi_1$ relation & Relations\\
\hline
m129-1 & [0,0] & $a^3B^2abA^3b^2AB$ & $a^3$, $b^3$
\\
m203-1 & [0,0] &$ a^3b^2a^2BA^3B^2A^2b$ & $a^3$, $b^3$ 
\\
\hline
\end{tabular}

\begin{tabular}{|c|cc|}
\hline
 &  $a$ & $b$ \\
\hline
m129 &  $xyx^{-1}$ & $x$ 
\\
m203 & $xyx^{-1}$ & $x$
\\
\hline
\end{tabular}

\begin{tabular}{|c|cccc|cc|}
\hline
& $w_x$ & $w_y$ & $w_{xy}$ & $w_{xy^{-1}}$&$w_a$&$w_b$\\
\hline
m129 
& $b$ & $Bab$ & $ab$ &$Ab$
&$w_{xy}w_x^{-1}$ & $w_x$
\\
m203 
& $b$ & $Bab$ & $ab$ &$Ab$
&$w_{xy}w_x^{-1}$ & $w_x$
\\
\hline
\end{tabular}

\begin{tabular}{|c|cc|}
\hline
 &  Peripheral  & holonomy \\
\hline
m129 & $A^2b\equiv ab$ & $ A^3b^2A \equiv BA$
 \\ &  $Ab$ &  $bA^3ba \equiv Ba$
\\
\hline
m203 & $a^2b\equiv Ab$ & $B^2A^3B\equiv e$ 
\\ &  $ab$  & $BA^3B^2A^3\equiv e$
\\ 
\hline
\end{tabular}
\end{center}

\subsection{m039, s000 and m053}

In this final section, we construct the remaining representations with boundary unipotent holonomy. 
We were not able to identify them with representations from the census for the following reasons. 

For m039 and s000 this comes from the fact that they are described by respectively five and six tetrahedra and therefore are not part of the census. For m053, the initial description provided is too complicated to be used out of the box. But with an other description, one can still give  triangular representations and the identification with m053-1 and m053-7 follows from the visual clues.

For those representations, we follow the same method to construct the representations, with the exception of the experimental verification of the traces and relations that are not necessary here.

\begin{center}
\begin{tabular}{|cc|c|}
\hline
 & $\pi_1$ relation &  Relations
\\
\hline
m039 & $a^6BAb^2AB$  & $a^7$, $b^3$, $(AB)^3$ \\
m053 & $a^3b^2a^3BA^5B$ & $a^4$, $b^3$, $(AB)^3$ \\
&& $a^8$, $b^3$, $(a^3B)^3$\\
s000 & $a^7BAb^2AB$ &  $a^8$, $b^3$, $(AB)^3$  \\
\hline 
\end{tabular}

\begin{tabular}{|c|c|cc|}
\hline
 & Triangle & $a$ & $b$  
\\
\hline
m039 & $\Lambda_2(3,3,7)$ &  $YX$  & $y$ 
 \\
m053 & $\Lambda_2(3,3,4)$ &  $xy$ & $X$ 
\\
m053 & $\Lambda_2(3,3,8)$ &  $(xy)^3$ & $x$
\\
s000 & $\Lambda_2(3,3,8)$ &  $YX$ &$y$ 
\\
\hline
\end{tabular}

\begin{tabular}{|c|c|cccc|}
\hline
 &  Triangle
&$w_x$ & $w_y$& $w_{xy}$ & $w_{xy^{-1}}$ 
\\
\hline
m039 
& $\Lambda_2(3,3,7)$ &  $AB$ & $b$ &$A$ &$bA$ 
 \\
 m053 
& $\Lambda_2(3,3,4)$ & $B$ & $ba$ &$a$& $BAB$
\\
m053 
&$\Lambda_2(3,3,8)$ &  $b$ & $Ba^3$ & $a^3$ & $bA^3b$
\\
s000 
& $\Lambda_2(3,3,8)$ &  $AB$ & $b$ & $A$ & $Ab$ 
  \\
\hline
\end{tabular}

\begin{tabular}{|c|cc|}
\hline
& Peripheral & holonomy\\
\hline
m039 & $A^5b\equiv a^2b$ & $aB^3ab\equiv a^2b$ \\
\hline
m053 & $BA^3$ & $B^2A^3bA^3B$\\  
with $\Lambda_2(3,3,4)$ & $\equiv Ba$ & $\equiv Ab$\\
with $\Lambda_2(3,3,8)$ & $\equiv BA^3$ & $\equiv a^3b $ \\ 
\hline
s000 & $bA$ & $BabaB^2\equiv bA$ \\
\hline
\end{tabular}
\end{center}

\section{More diverse examples}\label{sec-5}

To study more examples, we note the following.
Let $M$ be a hyperbolic manifold with one cusp.
 If a representation $\pi_1(M)\to\PU(2,1)$ has for boundary  a parabolic subgroup \emph{of rank one} (generated by a single element), then there exists a relation between the two generators of its peripheral holonomy. This relation can be used to factorize the morphism through a Dehn filling.
 
 Now, a manifold on which we executed a Dehn filling will have a different presentation. Often, this presentation enables to see more rapidly the triangle group involved if we are in the case of theorem~\ref{thm-rest}.

\subsection{A homological obstruction}
The existence of a \emph{surjective} morphism (as in theorem~\ref{thm-rest}) is submitted to an easy  criterium.

\begin{lemma}
If there exists a surjective morphism $\rho\colon \pi_1(M)\to\Lambda_2(p,q,r)$ then there exists a surjective morphism $\widetilde\rho\colon H_1(M;\Z) \to \Lambda_2(p,q,r)^{\rm ab}$, where we took the abelianizations.\qed
\end{lemma}

Among the representations not covered in theorem~\ref{thm-rest}, there are three that have a fractal limit set corresponding to a $\Delta_2(3,3,n;\theta_\infty)$.

The representation m045-1 shows the sign of a $\Delta_2(3,3,4;\theta_\infty)$ group as image. See its approximated limit set  (figure \ref{fig-m045}) and compare with the visual clues (figure \ref{fig-2-1}) in the appendix. 

The representations m035-1 and m130-1 both resemble to $\Delta_2(3,3,\infty;\theta_\infty)$. Compare figures~\ref{fig-3_3_inf} and \ref{fig-m035}. See also~\cite{rph} for the numerical approximations in a three-dimensional display.

\begin{proposition}\label{prop-counterex}
The boundary unipotent representations m045-1, m035-1 and m130-1  do not verify theorem \ref{thm-rest}. That is to say, those representations are not surjective morphisms of the form $\rho\colon\pi_1(M)\to\Lambda_2(3,3,n)$.
\end{proposition}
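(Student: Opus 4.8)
The plan is to invoke the homological obstruction of the lemma just stated: a surjection $\rho\colon\pi_1(M)\to\Lambda_2(3,3,n)$ induces a surjection $H_1(M;\Z)\to\Lambda_2(3,3,n)^{\mathrm{ab}}$, so it is enough to show that the target abelianization is not a quotient of $H_1(M;\Z)$. First I would compute these abelianizations. Since $\Lambda_2(3,3,n)^{\mathrm{ab}}=\Z^2/\langle(3,0),(0,3),(n,n)\rangle$, a short Smith-normal-form calculation gives $\Lambda_2(3,3,n)^{\mathrm{ab}}\cong\Z/3$ when $3\nmid n$ and $\Lambda_2(3,3,n)^{\mathrm{ab}}\cong(\Z/3)^2$ when $3\mid n$; in particular $\Lambda_2(3,3,\infty)^{\mathrm{ab}}\cong(\Z/3)^2$.

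For m035-1 and m130-1 the limit set points to $n=\infty$, so a surjection would force $\dim_{\mathbf F_3}H_1(M;\mathbf F_3)\geq 2$. I would read $H_1(\mathrm{m035})$ and $H_1(\mathrm{m130})$ off their presentations (or from SnapPy) and verify that each has $3$-rank exactly one, i.e.\ $\dim_{\mathbf F_3}H_1(M;\mathbf F_3)=1$; this rules out any surjection onto $(\Z/3)^2$, and hence onto $\Lambda_2(3,3,\infty)$. A reassuring consistency check is that $\pi_1(\mathrm{m035})$ does surject onto $\Lambda_2(3,3,5)$ (its row in theorem~\ref{thm-rest}), which is compatible with $3$-rank one because $\Lambda_2(3,3,5)^{\mathrm{ab}}\cong\Z/3$.

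The case m045-1 is the genuine obstacle, and it is exactly where the abelianized criterion is powerless: its limit set points to $n=4$, but $\Lambda_2(3,3,4)^{\mathrm{ab}}\cong\Z/3$ is a quotient of $H_1$ of every one-cusped manifold, so no homological contradiction can arise. Here I would descend to the exact census representation. If m045-1 arose as $\pi_1(\mathrm{m045})\xrightarrow{\rho}\Lambda_2(3,3,4)\to\Delta_2(3,3,4;\theta_\infty)\subset\PU(2,1)$ with $\rho$ surjective, then by proposition~\ref{prop-Lawton} the representation would be determined up to conjugation and complex conjugation by the traces of the images of $x,y,xy,xy^{-1}$, which for a $(3,3,4)$ group with $\theta_\infty$ are the prescribed real values $0,0,1$ and $3$ (the last, $\tr(xy^{-1})=\tr(I_1I_2I_3I_2)$, being unipotent). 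I would exhibit words of $\pi_1(\mathrm{m045})$ mapping to these generators and evaluate the corresponding traces of the stored matrices exactly; the contradiction is a trace mismatch, showing that the image of m045-1 is not conjugate, even after complex conjugation, to $\Delta_2(3,3,4;\theta_\infty)$ despite the resemblance of the two limit sets.

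The hard part will therefore be m045-1: the two $(3,3,\infty)$ cases fall immediately to the homological lemma, whereas here one must certify by an exact computation that a fractal looking like a $(3,3,4)$ limit set is not produced by any surjective morphism onto $\Lambda_2(3,3,4)$. If a purely group-theoretic argument is preferred, one can instead exploit that the boundary of m045-1 is a rank-one parabolic to factor the morphism through the Dehn filling cut out by its peripheral relation, and then obstruct the resulting surjection on the homology of that filled manifold.
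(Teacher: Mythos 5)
Your key lemma (the homological obstruction) and your Smith-normal-form computation of $\Lambda_2(3,3,n)^{\mathrm{ab}}$ agree with the paper, but the proof as you structure it has a genuine gap, and the paper's actual mechanism appears in your write-up only as a closing parenthetical. The paper's proof is uniform for all three representations: each of m045-1, m035-1 and m130-1 has rank-one parabolic boundary holonomy, hence a relation between the two peripheral curves, hence the representation factors through a Dehn filling --- ${\rm m045}(5,1)$, ${\rm m035}(3,-1)$, ${\rm m130}(2,-1)$ --- whose first homologies $\Z/14\Z$, $\Z/20\Z$, $\Z/16\Z$ are all coprime to $3$. So the filled manifolds admit no surjection onto $\Z/3\Z$, a fortiori none onto $\Z/3\Z\oplus\Z/3\Z$, which excludes \emph{every} $n$ at once. (Implicit here is that $\Lambda_2(3,3,n)\to\Delta_2(3,3,n;\theta_\infty)$ is faithful by theorem~\ref{thm-schwarz33n}, so a hypothetical surjection $\rho$ has the same kernel as the $\PU(2,1)$-representation and descends, still surjective, to the filled fundamental group.)

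Concretely, two steps of yours would fail. For m035-1 and m130-1, the $\mathbf{F}_3$-rank of $H_1$ of the \emph{unfilled} manifold can only exclude targets with abelianization $(\Z/3\Z)^2$, i.e.\ $n\equiv 0 \pmod 3$ and $n=\infty$; your restriction to $n=\infty$ rests on the limit-set picture, which in this paper is experimental evidence and not admissible in a proof, whereas the proposition excludes all $n$. Unfilled homology can never close this gap: $H_1(M;\Z)$ has a $\Z$ summand and always surjects onto $\Z/3\Z$ --- indeed $\pi_1({\rm m035})$ genuinely surjects onto $\Lambda_2(3,3,5)$, as your own consistency check notes, so the obstruction must use representation-specific data, namely the peripheral relation and its filling slope. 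For m045-1, your primary argument via proposition~\ref{prop-Lawton} is not a proof as stated: to compare $\tr\circ\xi$ with the prescribed $(3,3,4)$ traces you must exhibit words $w_x,w_y,w_{xy},w_{xy^{-1}}$, but these depend on the unknown hypothetical surjection $\rho$, of which there are infinitely many candidates; a trace mismatch for one choice of words rules out nothing. Your final sentence --- factor through the Dehn filling cut out by the rank-one peripheral relation and obstruct on the homology of the filled manifold --- is exactly the paper's proof, and it should be promoted from fallback to the main argument and applied to all three representations, not only m045-1.
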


In the case of $\Lambda_2(3,3,n)$, the abelianization of this abstract group is $\Z/3\Z$ or $\Z/3\Z\oplus \Z/3\Z$ depending on the class of $n$ modulo $3$. For $\Lambda_2(3,3,4)$ we get $\Z/3\Z$, and for $\Lambda_2(3,3,\infty)$ we get $\Z/3\Z \oplus \Z/3\Z$.

\begin{proof}[Proof of proposition \ref{prop-counterex}]
The representation m045-1 (with id  [0,0]) has  boundary unipotent holonomy of rank one. Therefore, with a relation between the two peripheral curves. 

This relation gives a factorization by the corresponding Dehn filling. In fact, in the terms of SnapPy and the data of m045-1 in~\cite{Ptolemy}, we get the Dehn filling ${\rm m045}(5,1)$. This can be verified with exact computations with the code in the appendix.

Computations of SnapPy show $H_1({\rm m045}(5,1)) = \Z / 14\Z$. But $14$ and $3$ are relatively prime  and therefore there exists no surjective morphism from $H_1({\rm m045}(5,1))$ to $\Z/3\Z$, giving an impossibility with the previous lemma.

\vskip10pt
Similarly, the representations m035-1 (of id [0,0]) and m130-1 (of id [0,1])  factorize through the Dehn fillings ${\rm m035}(3,-1)$ and ${\rm m130}(2,-1)$  respectively.

Now, $H_1({\rm m035}(3,-1)) = \Z/20\Z $ and $H_1({\rm m130}(2,-1)) = \Z/16\Z$. Thus there can't be any surjective morphism to $\Z/3\Z\oplus \Z/3\Z$, raising an impossibility with the lemma.
\end{proof}

\vskip10pt
The fact that m045-1 has a limit set so similar to the one of $\Delta_2(3,3,4;\theta_\infty)$ suggests the existence of a close relationship. For instance, there  might exist a morphism with finite index image in $\Delta_2(3,3,4;\theta_\infty)$. But this remains an open question.

\subsection{A Lagrangian triangle group}

As we will see (proposition~\ref{prop-m023}), there exists at least one representation with boundary unipotent holonomy,
\begin{equation}
\xi \colon \pi_1(M)\to\Lambda_2(p,q,r)\to\PU(2,1),
\end{equation}
surjective on $\Lambda_2(p,q,r)$ but \emph{that cannot be factorized} by 
\begin{equation}
\pi_1(M)\to\Lambda_2(p,q,r)\to\Delta_2(p,q,r;\theta)\subset \PU(2,1).
\end{equation}

\begin{lemma}
For any $4\leq n<\infty$, let $\eta\colon \Lambda_2(3,3,n)\to \PU(2,1)$ be a representation with $\eta([x,y])$ unipotent. Then  $\tr(\eta(xy^{-1}))$ is not real.
\end{lemma}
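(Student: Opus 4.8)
The plan is to use the explicit trace formulas established earlier for the complex hyperbolic triangle group together with the characterization of the parameter $\theta$ by the trace $\tr(I_iI_jI_kI_j)$. The key observation is that the word $[x,y]$ in $\Lambda_2(3,3,n)$ maps, under the morphism $(x,y)\mapsto(I_1I_2,I_2I_3)$, to a specific element of $\Delta_2(3,3,n;\theta)$ whose trace I can compute as a function of $\theta$ (equivalently, of $\tr(I_iI_jI_kI_j)$). First I would write $\eta(x)=I_1I_2$, $\eta(y)=I_2I_3$ and express the commutator $\eta([x,y])=(I_1I_2)(I_2I_3)(I_2I_1)(I_3I_2)$; using the explicit $3\times 3$ matrices for $I_1,I_2,I_3$ in terms of the Gram entries $c_{ij}$ (with $c_{12}=c_{23}=\cos\frac\pi3=\tfrac12$ and $c_{13}=\cos\frac\pi n\,e^{\ii\theta}$), I obtain a polynomial expression for $\tr(\eta([x,y]))$ as a function of $\cos\theta$.

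Next I would impose the condition that $\eta([x,y])$ is unipotent, i.e. $\tr(\eta([x,y]))=3$. Since $\tr(\eta([x,y]))$ is, by the preceding computation, a real polynomial in $\cos\theta$ (all the $c_{ij}$ enter only through $|c_{ij}|^2$ and $\Re(c_{12}c_{23}c_{31})=c_{12}c_{23}c_{13}\cos\theta$, exactly as noted after the trace formula), this equation pins $\cos\theta$ down to one or finitely many explicit algebraic values. In parallel, from the formula for $(3,3,n)$ I have
\begin{equation}
\tr(\eta(xy^{-1}))=\tr(I_1I_2I_3I_2)=4\cos\left(\frac\pi n\right)^2-4\cos\frac\pi n\cos\theta,
\end{equation}
which shows that $\tr(\eta(xy^{-1}))$ is real precisely when $\cos\theta$ is real. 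The strategy is then to substitute the value(s) of $\cos\theta$ forced by unipotency of the commutator into this last expression and check that the resulting $\cos\theta$ cannot be real --- i.e. that the unipotency equation for $[x,y]$ has no solution with $\theta$ real.

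The main obstacle is carrying out the commutator trace computation cleanly and identifying the correct polynomial equation in $\cos\theta$, then verifying that its real-$\theta$ locus is empty. Concretely, I expect the unipotency condition $\tr(\eta([x,y]))=3$ to be equivalent to a polynomial in $t=\cos\theta$ (with coefficients depending on $n$) whose roots are all non-real, or at least not admissible as a genuine angular invariant; the delicate point is ruling out, uniformly in $n\ge 4$, that any real root appears. I would handle this by treating $\cos\theta$ as a free real variable, showing that the commutator trace stays in a range (or on a curve) that excludes the value $3$ for all admissible real $\theta$ --- using the constraint $\cos\theta<\frac{-1+2\cos(\pi/n)^2}{\cos(\pi/n)}$ from Proposition~\ref{prop-unique-triangle} to restrict the domain. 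If the commutator trace as a function of real $\cos\theta$ never equals $3$ on this admissible interval, then unipotency forces $\cos\theta$ (hence $\theta$) to be non-real, and by the displayed formula $\tr(\eta(xy^{-1}))$ is correspondingly non-real, which is exactly the claim. This also dovetails with the surrounding discussion: it is the reason the representation of Proposition~\ref{prop-m023}, where the boundary holonomy is governed by $[x,y]$ rather than $xy^{-1}$, cannot come from a complex hyperbolic triangle group $\Delta_2(3,3,4;\theta)$.
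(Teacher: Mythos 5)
There is a genuine gap: your argument only treats representations of $\Lambda_2(3,3,n)$ that factor through a complex hyperbolic triangle group, i.e.\ that extend to $\Lambda(3,3,n)$ via $(x,y)\mapsto(I_1I_2,I_2I_3)$ with the Gram-matrix parametrization by a \emph{real} angular invariant $\theta$. But the lemma quantifies over \emph{all} representations $\eta\colon\Lambda_2(3,3,n)\to\PU(2,1)$ with $\eta([x,y])$ unipotent --- indeed its whole purpose (Proposition~\ref{prop-m023}) is to handle a representation that is \emph{not} induced by any $\Lambda_2(3,3,4)\to\Delta_2(3,3,4;\theta)$. The space of representations with $x,y,xy$ of the prescribed elliptic orders is two-real-dimensional, parametrized essentially by the complex number $\tr(\eta(xy^{-1}))$ (plus the choice of root for the Lawton quadratic satisfied by $\tr(\eta([x,y]))$); the triangle representations form only a one-parameter subfamily, whose trace values fill the bounded interval $\bigl(4(1-\cos^2\frac\pi n),\,4\cos\frac\pi n(1+\cos\frac\pi n)\bigr]$ computed in section~\ref{sec-2-triangle}. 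So ``$\tr(\eta(xy^{-1}))$ real'' does not imply ``$\eta$ is a triangle representation,'' and showing that $\tr(\eta([x,y]))=3$ has no solution with $\theta$ real in the admissible interval would prove only the strictly weaker statement that no \emph{triangle} representation has unipotent commutator. Your fallback of ``complexifying $\cos\theta$'' cannot repair this: a non-real $\theta$ destroys the Hermitianity of $H$, so it parametrizes representations into $\PGL(3,\C)$, not into $\PU(2,1)$, and for a general $\eta$ there is simply no $\theta$ attached. The paper instead works with the Guilloux--Will parametrization of the full family, taking $\tr(\eta(xy^{-1}))\in\C$ as the free coordinate: unipotency of $[x,y]$ forces the discriminant of the Lawton quadratic (which is $\leq 0$ on $\PU(2,1)$, the two roots being complex conjugates) to vanish, and solving $\Delta=0$ together with $\tr(\eta([x,y]))=3$ yields a quadratic in $\tr(\eta(xy^{-1}))$ whose two conjugate solutions are checked to have nonzero imaginary part for each $n$ (e.g.\ $\simeq 2.820+\ii\cdot 0.222$ for $n=4$).

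A secondary error: within the triangle family, $\tr(\eta([x,y]))$ is \emph{not} a real polynomial in $\cos\theta$. The identity $\Re(c_{12}c_{23}c_{31})=c_{12}c_{23}c_{13}\cos\theta$ you invoke applies to the four-letter word $I_iI_jI_kI_j$; the commutator is $(I_1I_3I_2)^2$, a six-letter word whose trace involves $c_{12}c_{23}c_{31}$ and its conjugate with unequal coefficients, hence carries a $\sin\theta$ component. This is consistent with the paper's remark that complex conjugation of the representation (i.e.\ $\theta\mapsto-\theta$) conjugates precisely the trace of $[x,y]$ while fixing the other four traces. So even the restricted claim you aim for requires analyzing two real equations ($\Re$ and $\Im$ of the commutator trace equal to $3$ and $0$) in the single real variable $\theta$, not a one-variable polynomial equation in $\cos\theta$.
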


\begin{proof}
We use the exact parametrization of Guilloux and Will~\cite{GuillouxWill}. 
The formal computations that we will described have too many terms to be reproduced here, but they can be recovered in the code of the author~\cite{rphgit} or directly reproduced with the description of~\cite{GuillouxWill}.

Recall with proposition~\ref{prop-Lawton} that the discriminant $\Delta$ of the equation for which $\tr(\eta([x,y]))$ is solution is always negative or null.
Since we want $\tr(\eta([x,y]))=3$, we must have $\Delta=0$. This equation can be solved by exact computations and is only a function of $\tr(\eta(xy^{-1}))$ since $\tr(\eta(x)),\tr(\eta(y))$ and $\tr(\eta(xy))$ are determined by their elliptical order.

With the resolution of $\Delta=0$ in terms of the only free variable $\tr(\eta(xy^{-1}))$, one can compute  $\tr(\eta([x,y]))$ explicitly and solve the equation $\tr(\eta([x,y]))=3$ in terms of $\tr(\eta(xy^{-1}))$. It is again a quadratic equation. There are in general two  solutions  and are complex conjugates. We take the solution with positive imaginary part.

Numerical approximations show that 
for any $n<\infty$, the imaginary part of $\tr(\eta([x,y]))$ is always non zero. For example, when $n=4$, $\tr(\eta(xy^{-1}))\simeq 2.820 + \ii \cdot 0.222$.
\end{proof}

The representation m023-7 (with id [0,4]) of the census has a limit set different from the usual triangular subgroups, see figure~\ref{fig-m023-7}. The boundary holonomy has rank one, and therefore m023-7 factorizes through a Dehn filling. In this case, it is ${\rm m023}(2,-1)$.

\begin{proposition}\label{prop-m023}
There exists a surjective morphism 
\begin{equation}
\rho\colon \pi_1({\rm m023}) \to \Lambda_2(3,3,4),
\end{equation}
and a composition $\pi_1({\rm m023})\to\Lambda_2(3,3,4)\to\PU(2,1)$ having boundary unipotent holonomy.
The boundary unipotent representation in $\PU(2,1)$ is not induced by any representation $\Lambda_2(3,3,4)\to\Delta_2(3,3,4;\theta)$.
\end{proposition}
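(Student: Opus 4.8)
The plan is to split the statement into the combinatorial construction of $\rho$ (which follows the same recipe as theorem~\ref{thm-rest}) and the non-factorization claim, which is the genuinely new content and rests entirely on the preceding lemma. First I would construct the surjective morphism. Since the census representation m023-7 has rank-one peripheral holonomy, the two peripheral curves satisfy a relation, so the morphism factors through the Dehn filling ${\rm m023}(2,-1)$; this filled manifold has a shorter presentation than $\pi_1({\rm m023})$ and makes the triangle structure visible. From that presentation I would exhibit triangle-type relations (words of the form $u^3$, $v^3$ and $(uv)^4$ after suitable substitutions) whose conjunction implies the single defining relator, and then define $\rho$ on the two generators $a,b$ so that these relations are automatically satisfied in $\Lambda_2(3,3,4)$. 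Surjectivity would be checked by producing words $w_x,w_y\in\pi_1({\rm m023})$ with $\rho(w_x)=x$ and $\rho(w_y)=y$. As the paper itself stresses in the description of \emph{the method}, locating the right decomposition of the relator and the correct generator images is the most delicate, essentially trial-and-error, part.

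Next I would address the boundary holonomy. Tracking the images of the two peripheral generators under $\rho$, I would verify that up to conjugacy they both land in the cyclic subgroup $\langle [x,y]\rangle\subset\Lambda_2(3,3,4)$. Rather than composing with the standard map $\Lambda_2(3,3,4)\to\Delta_2(3,3,4;\theta_\infty)$, I would use the representation $\eta\colon\Lambda_2(3,3,4)\to\PU(2,1)$ supplied by the preceding lemma, namely the one in which $\eta([x,y])$ is unipotent. Its existence is exactly what the lemma's proof provides: solving $\tr(\eta([x,y]))=3$ in the Guilloux--Will parametrization (after imposing the vanishing discriminant) yields two complex-conjugate solutions, the Lagrangian representations. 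Because the peripheral holonomy maps into $\langle [x,y]\rangle$ and $\eta([x,y])$ is unipotent, the composition $\pi_1({\rm m023})\to\Lambda_2(3,3,4)\to\PU(2,1)$ has boundary unipotent holonomy, which settles the first two assertions.

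Finally, the non-factorization. Suppose for contradiction that $\eta$ were induced by some representation $\Lambda_2(3,3,4)\to\Delta_2(3,3,4;\theta)$ via $x\mapsto I_1I_2$ and $y\mapsto I_2I_3$. Then $xy^{-1}$ is sent to $I_1I_2I_3I_2$, whose trace is real for every $\theta$: it is the quantity $\tr(I_iI_jI_kI_j)$ computed earlier, a real function of $\cos\theta$. Hence $\tr(\eta(xy^{-1}))$ would be real. But $\eta([x,y])$ is unipotent by construction, so the preceding lemma with $n=4$ forces $\tr(\eta(xy^{-1}))$ to be non-real, a contradiction. This last step is short and clean; I expect the genuine difficulty, and the feature that distinguishes this example from those of theorem~\ref{thm-rest}, to lie entirely in the construction of $\rho$ and in the identification of the peripheral holonomy with a power of the commutator $[x,y]$ rather than with $xy^{-1}$.
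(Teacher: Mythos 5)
Your proposal matches the paper's proof essentially step for step: the paper likewise factors through the Dehn filling ${\rm m023}(2,-1)$ (whose SnapPy presentation $\langle a,b \mid a^4b^3,\; aBab^2ab^2\rangle$ in fact has two relators, not one), defines $\rho(a)=xy$, $\rho(b)=x$, observes that the peripheral word $aBAb$ maps to $xyXYXx=[x,y]$ with the second peripheral curve its square, and rules out factorization through $\Delta_2(3,3,4;\theta)$ by the same combination of the preceding lemma with the real trace of $I_1I_2I_3I_2$. No gaps; the only cosmetic difference is that you spell out the final contradiction explicitly, where the paper compresses it into ``we conclude with the previous lemma.''
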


Note that any inclusion $\Lambda_2(p,q,r)\to\Delta_2(p,q,r;\theta)$ sends $xy^{-1}$ to a matrix with real trace in $\PU(2,1)$. Indeed, when described in terms of $I_1,I_2$ and $I_3$, we have $xy^{-1} \mapsto I_1I_2I_3I_2 = I_1(I_2I_3I_2)$ and this is the product of two reflections.

\begin{proof}
One of the presentations of $\pi_1({\rm m023}(2,-1))$ given by SnapPy (using also {\tt M.randomize()}) is 
\begin{equation}
\pi_1({\rm m023}(2,-1)) = \{a,b \, | \, a^4b^3=aBab^2ab^2=e\}
\end{equation}
with the peripheral curves given by $aBAb$ and $aBAbaBAb$.
Now, we define
\begin{equation}
\rho(a) = xy, \; \rho(b) = x
\end{equation}
and one can verify the relations of the fundamental group but also  $(aBAb)^2 = aBAbaBAb$.

The boundary holonomy of this representation is prescribed by $aBAb$ and this word has for image  $xyXYXx = xyXY = [x,y]$. We conclude with the previous lemma.
\end{proof}

Now we can compare the fractal of m023-7 with the one of $\Lambda_2(3,3,4)\to\PU(2,1)$ sending $[x,y]$ to a unipotent element as in the lemma (figure~\ref{fig-3_3_4-lag}). Visually, they match indeed.
 
 \vskip10pt

Denote $\rho\colon\pi_1({\rm m023})\to \Lambda_2(3,3,4)$ the surjective morphism from the proposition. Denote $\xi\colon\pi_1({\rm m023})\to\PU(2,1)$ its composition with $\eta\colon \Lambda_2(3,3,4)\to\PU(2,1)$ sending $[x,y]$ to a unipotent element.

A consequence of the fact that $\tr(\eta([x,y]))=3$ is that $\xi(\pi_1({\rm m023}))=\langle\eta(x),\eta(y)\rangle$ is $\R$-decomposable: it is generated by three antiholomorphic reflections.  This comes from a theorem of Paupert and Will:
\begin{theorem*}{\cite{Paupert}}
Let $A, B \in \PU(2,1)$ be two isometries not ﬁxing a common point in $\mathbf H^2_\C$ . Then the pair $A, B$ is $\R$-decomposable if and only if the commutator $[A,B]$ has a ﬁxed point in $\mathbf H_\C^2$ whose associated eigenvalue is real and positive.
\end{theorem*}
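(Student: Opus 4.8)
The plan is to prove both directions by reducing $\R$-decomposability to the existence of a \emph{single} real reflection that simultaneously reverses $A$ and $B$. Recall that $(A,B)$ is $\R$-decomposable when one can write $A=\sigma_1\sigma_2$ and $B=\sigma_2\sigma_3$ with each $\sigma_k$ an antiholomorphic involution (a real reflection fixing a Lagrangian copy of $\mathbf H^2_\R$), lifted to $\C^{2,1}$ as an antilinear involutive map preserving the Hermitian form. First I would note that such a decomposition exists if and only if there is one real reflection $\sigma$, playing the role of $\sigma_2$, satisfying $\sigma A\sigma=A^{-1}$ and $\sigma B\sigma=B^{-1}$. Indeed, given such a $\sigma$ the candidates $\sigma_1:=A\sigma$ and $\sigma_3:=\sigma B$ are antiholomorphic, and the reversing relations force $(A\sigma)^2=A(\sigma A\sigma)=AA^{-1}=e$ and $(\sigma B)^2=(\sigma B\sigma)B=B^{-1}B=e$, so both are genuine real reflections and recover $\sigma_1\sigma_2=A$, $\sigma_2\sigma_3=B$. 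Thus the whole statement becomes a question about the existence of a common reverser $\sigma$.

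For the direct implication I would start from a decomposition and compute
\begin{equation}
[A,B]=ABA^{-1}B^{-1}=(\sigma_1\sigma_2)(\sigma_2\sigma_3)(\sigma_2\sigma_1)(\sigma_3\sigma_2)=(\sigma_1\sigma_3\sigma_2)^2=\tau^2 ,
\end{equation}
where $\tau=\sigma_1\sigma_3\sigma_2$ is a product of three real reflections, hence an antiholomorphic isometry. Lifting $\tau$ to an antilinear map $v\mapsto M\bar v$, the linear map $\tau^2$ is exactly $M\bar M$, a lift of $[A,B]$. Any antilinear eigenvector $\tau v=\mu v$ then satisfies $\tau^2 v=|\mu|^2 v$, so it is a genuine eigenvector of $[A,B]$ with real nonnegative eigenvalue; moreover the spectrum of $M\bar M$ is conjugation-invariant, since $M\bar M v=\lambda v$ yields $M\bar M(M\bar v)=\bar\lambda(M\bar v)$. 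The key point is to locate, among the antilinear eigenvectors of $\tau$, one of negative type: using the signature $(2,1)$ together with the conjugation symmetry of the spectrum, one extracts a negative vector carrying a real positive eigenvalue, i.e. a fixed point of $[A,B]$ inside $\mathbf H_\C^2$ whose eigenvalue is real and positive, with the no-common-fixed-point hypothesis preventing $\tau$ from degenerating.

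Conversely, and this is where I expect the real difficulty to lie, I would construct the common reverser $\sigma$ from the sole hypothesis that $[A,B]$ has a negative eigenvector $v_0$ with real positive eigenvalue. My plan is to use $v_0$ to build an antiholomorphic square root $\tau$ of $[A,B]$ fixing the point $[v_0]\in\mathbf H_\C^2$, and then to promote $\tau$ to the form $\sigma_1\sigma_3\sigma_2$ compatible with reversing $A$ and $B$; concretely I would normalize coordinates so that $v_0$ is a standard negative vector, parametrize the finite-dimensional family of candidate real reflections $\sigma$ (antilinear involutions preserving the form), and solve the two equations $\sigma A\sigma=A^{-1}$ and $\sigma B\sigma=B^{-1}$ simultaneously, showing that the real-positive-eigenvalue condition is precisely what renders this system solvable. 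The main obstacle is exactly this sufficiency: the forward implication is essentially formal once one observes that $[A,B]$ is an antiholomorphic square, whereas the converse requires proving that the eigenvalue condition — necessary by the first half — actually guarantees the \emph{joint} solvability of the two reversing equations. I would carry out this step either by an explicit matrix construction anchored at $v_0$ or via the trace and cross-ratio coordinates used by Paupert and Will, invoking the no-common-fixed-point hypothesis to rule out the degenerate configurations where $A$ and $B$ share an invariant subspace.
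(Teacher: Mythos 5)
First, note that the paper itself gives no proof of this statement: it is imported wholesale from \cite{Paupert}, so your attempt can only be measured against the theorem itself, not against an internal argument of the paper.

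Your reduction of $\R$-decomposability to the existence of a single real reflection $\sigma$ reversing both $A$ and $B$ is correct (and the implicit fact that every antiholomorphic involution of $\mathbf H_\C^2$ is a real reflection does hold, since an antilinear lift squaring to $-\mathrm{id}$ is impossible on $\C^3$), as is the identity $[A,B]=\tau^2$ with $\tau=\sigma_1\sigma_3\sigma_2$. But the step you label ``the key point'' is a genuine gap, not a deferred detail: an antilinear map $\tau\colon v\mapsto M\bar v$ need not have any eigenvector of negative type. The spectrum of $M\bar M$ is conjugation-invariant, so in odd dimension it contains a real eigenvalue, but nothing places that eigenvalue on a negative-type eigenvector. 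Concretely, if $[A,B]=\tau^2$ is loxodromic, its fixed points in $\C\mathbf P^2$ are two null lines and one positive line, so it has \emph{no} fixed point in $\mathbf H_\C^2$ at all, and the conclusion you are driving at (a fixed point in the open ball) is unobtainable; the statement of Paupert and Will locates the fixed point in $\overline{\mathbf H_\C^2}$, and their argument must deal separately with the possible types of antiholomorphic square roots. The present paper actually needs the boundary case: it invokes the theorem for a pair whose commutator is \emph{unipotent} ($\tr(\eta([x,y]))=3$), and a unipotent map fixes exactly one point, which lies on $\partial\mathbf H_\C^2$, not in $\mathbf H_\C^2$. So even the ``easy'' direction cannot be closed along your line without correcting the target of the proof.

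The converse is missing entirely. Your final paragraph is a plan, not an argument: ``parametrize the candidate real reflections and show that the system $\sigma A\sigma=A^{-1}$, $\sigma B\sigma=B^{-1}$ is solvable'' restates the theorem rather than proving it. Moreover the intermediate goal you set, an antiholomorphic square root $\tau$ of $[A,B]$ fixing $[v_0]$, would not suffice even if achieved: decomposability requires a square root of the very particular form $\sigma_1\sigma_3\sigma_2$ adapted to the pair, equivalently an antiholomorphic \emph{involution} conjugating $(A,B)$ to $(A^{-1},B^{-1})$, and promoting ``some square root of the commutator'' to such an adapted involution is precisely the hard implication. In the irreducible case the natural route is through trace coordinates: by proposition~\ref{prop-Lawton} a pair is determined up to conjugacy by five traces, and in $\SU(2,1)$ the four simple traces of $(\bar A,\bar B)$ and $(A^{-1},B^{-1})$ agree automatically, so the whole obstruction concentrates in $\tr[A,B]$; one must then show the conjugating antiholomorphic map can be chosen involutive, and treat reducible configurations separately, which is where the no-common-fixed-point hypothesis genuinely enters. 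None of this is carried out, so as it stands the proposal establishes neither implication.
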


Now, since $\eta$ is generated by three antiholomorphic reflections, we are in presence of a Lagrangian triangular group~\cite{Will07,FK}: the three antiholomorphic reflections preserve a real plane inside $\mathbf H_\C^2$, also called \emph{Lagrangian} plane in complex hyperbolic geometry.

\section{Appendix: figures and code}

\begin{figure}[ht]
\centering
\includegraphics[width=\textwidth]{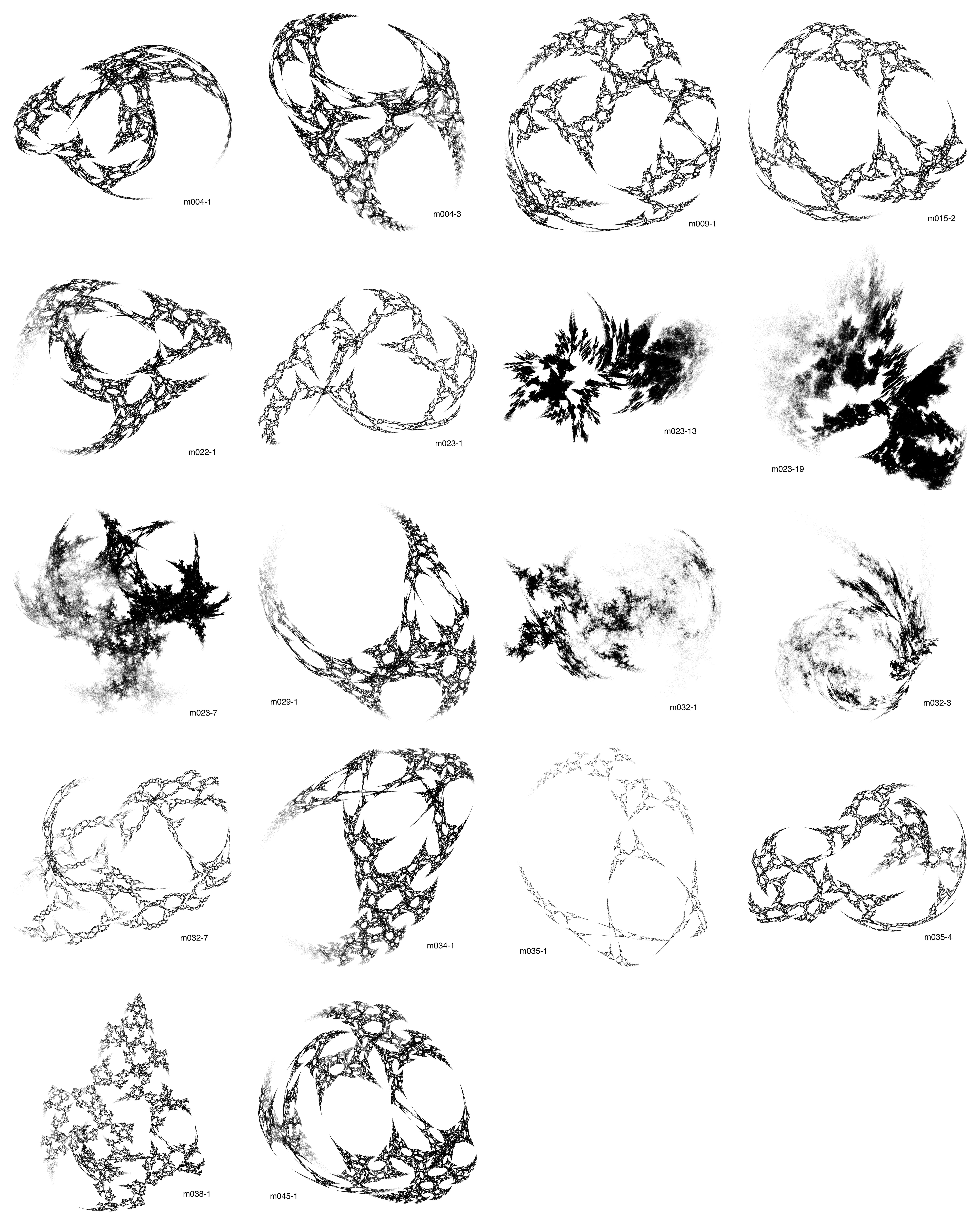}
\caption{The fractal limit sets found in the census, part 1.}\label{fig-all-1}
\end{figure}

\begin{figure}[ht]
\centering
\includegraphics[width=\textwidth]{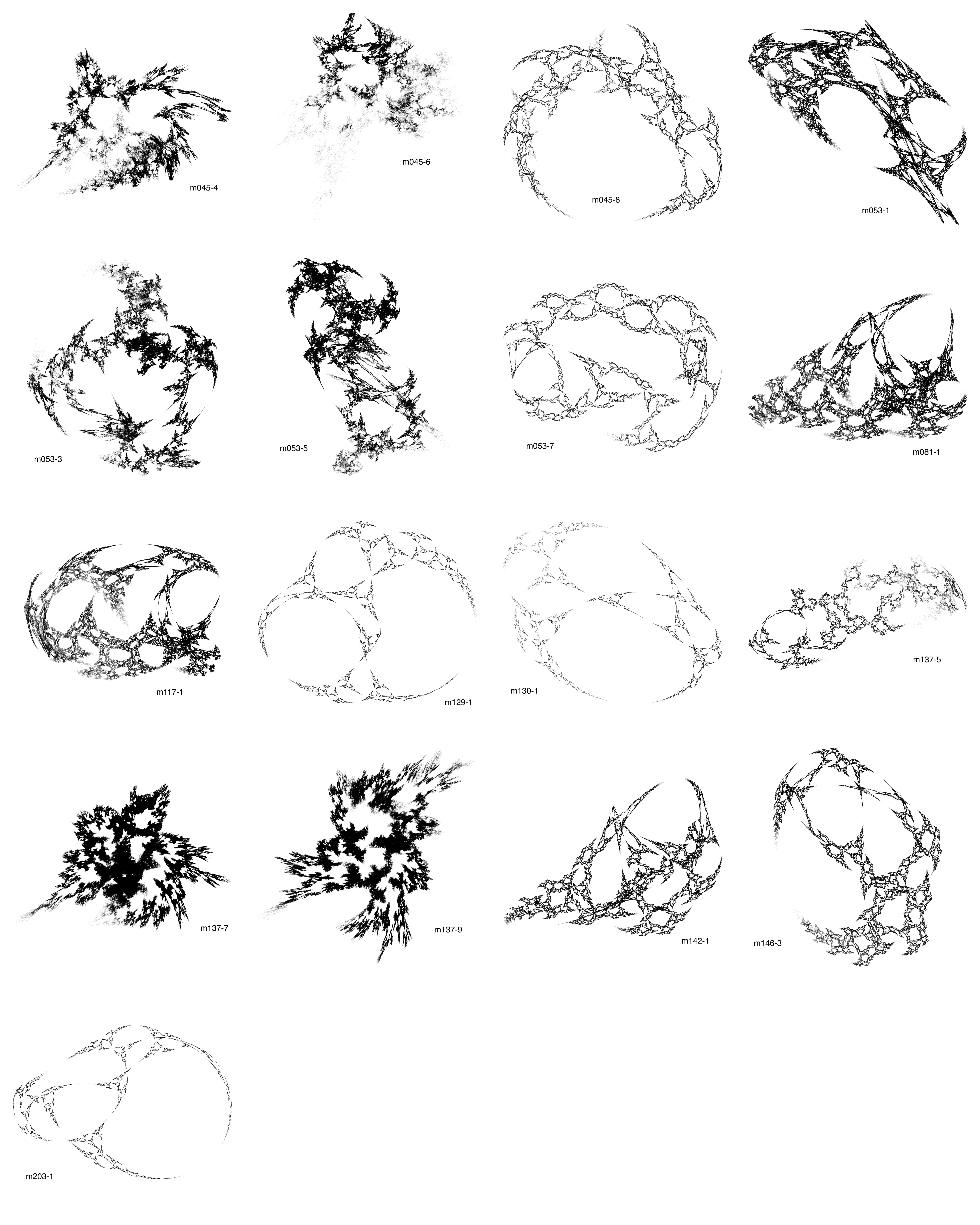}
\caption{The fractal limit sets found in the census, part 2.}\label{fig-all-2}
\end{figure}

\begin{figure}[ht]
\centering
\includegraphics[width=\textwidth]{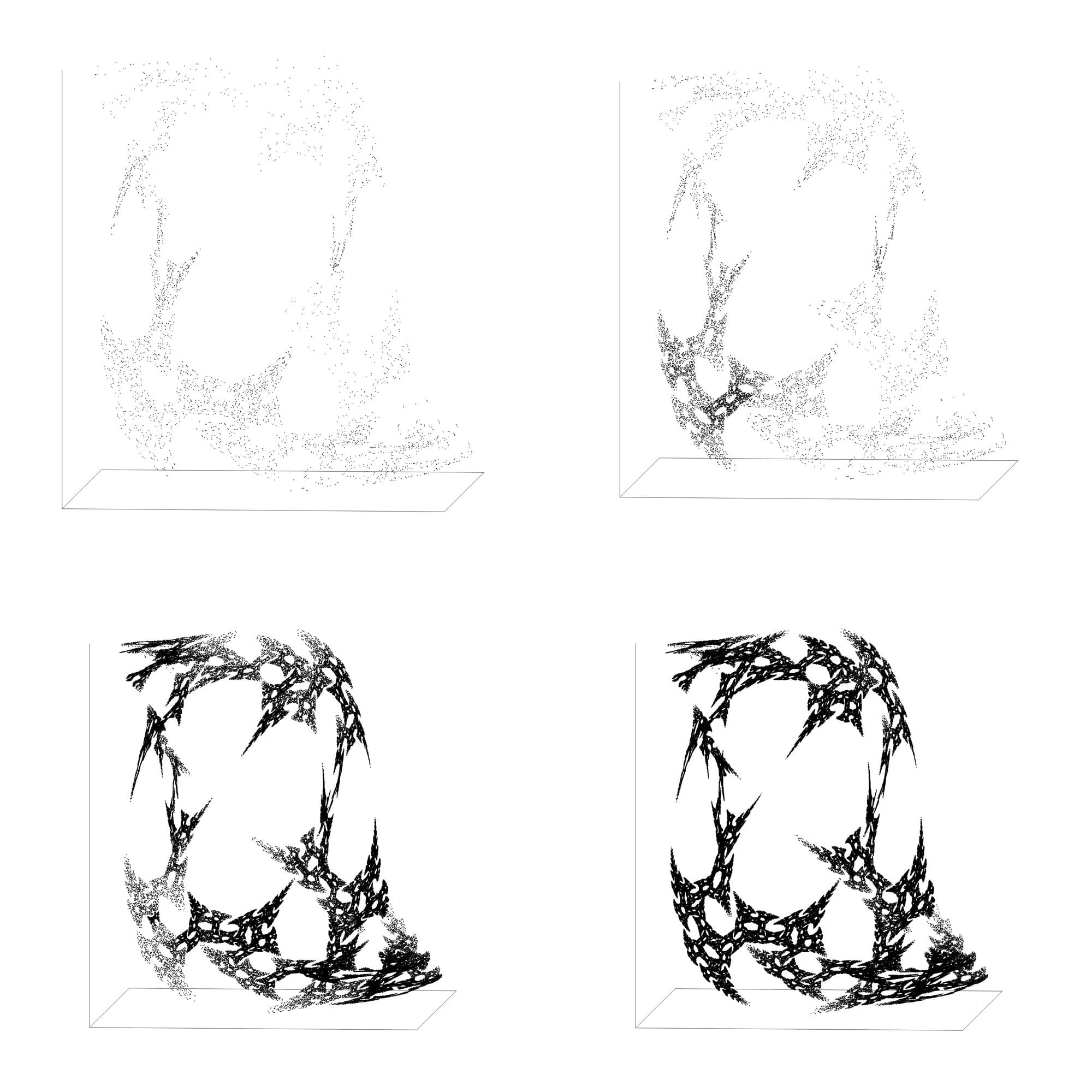}
\caption{Limit set of m004-1, with steps illustrated.}\label{fig-m004-1}
\end{figure}

\begin{figure}[ht]
\centering
\includegraphics[width=\textwidth]{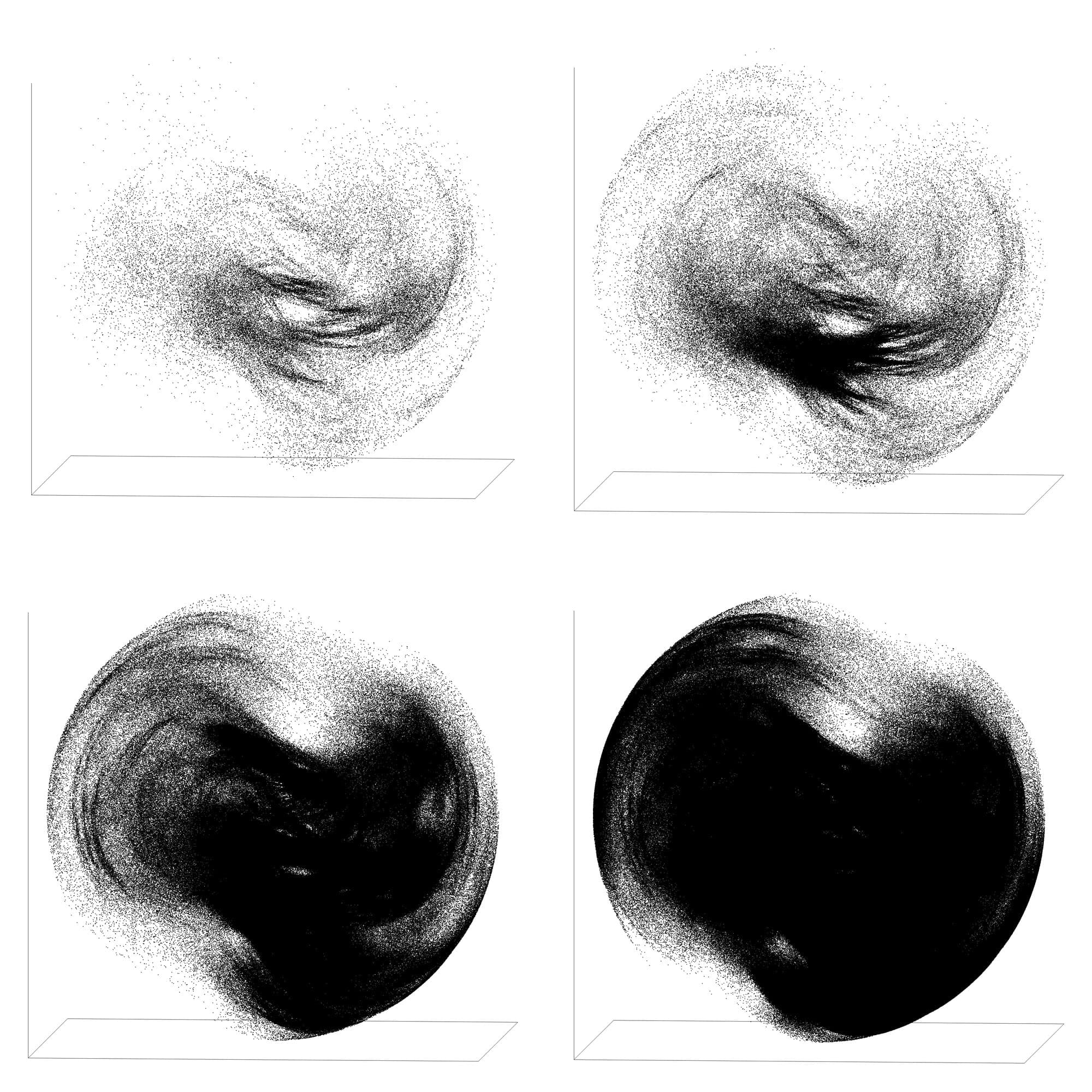}
\caption{Limit set of m004-5, with steps illustrated.}\label{fig-m004-5}
\end{figure}

\begin{figure}[ht]
\centering
\includegraphics[width=\textwidth]{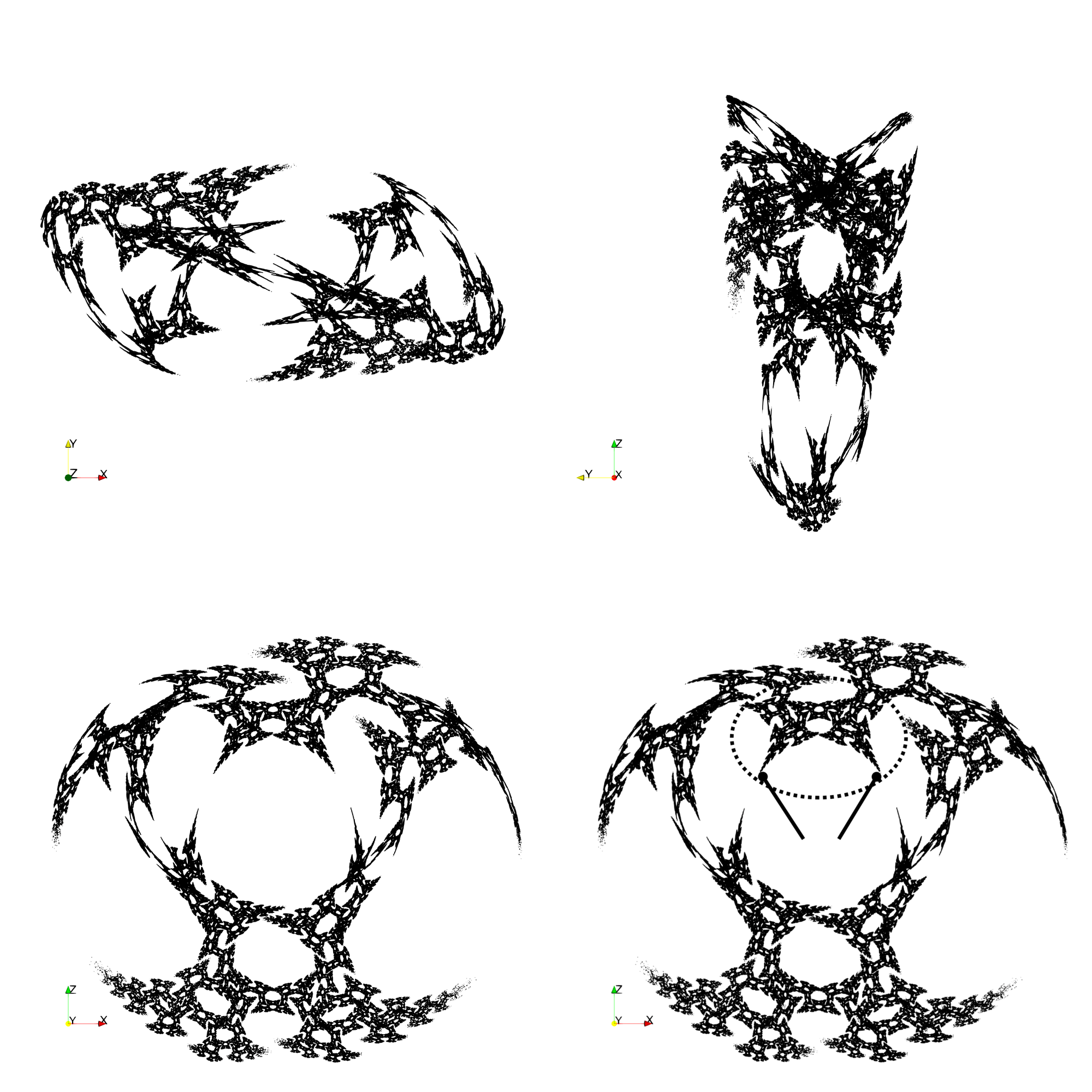}
\caption{Limit set of $\Delta(3,3,4;\theta_\infty)$.}\label{fig-2-1}
\end{figure}

\begin{figure}[ht]
\centering
\includegraphics[width=\textwidth]{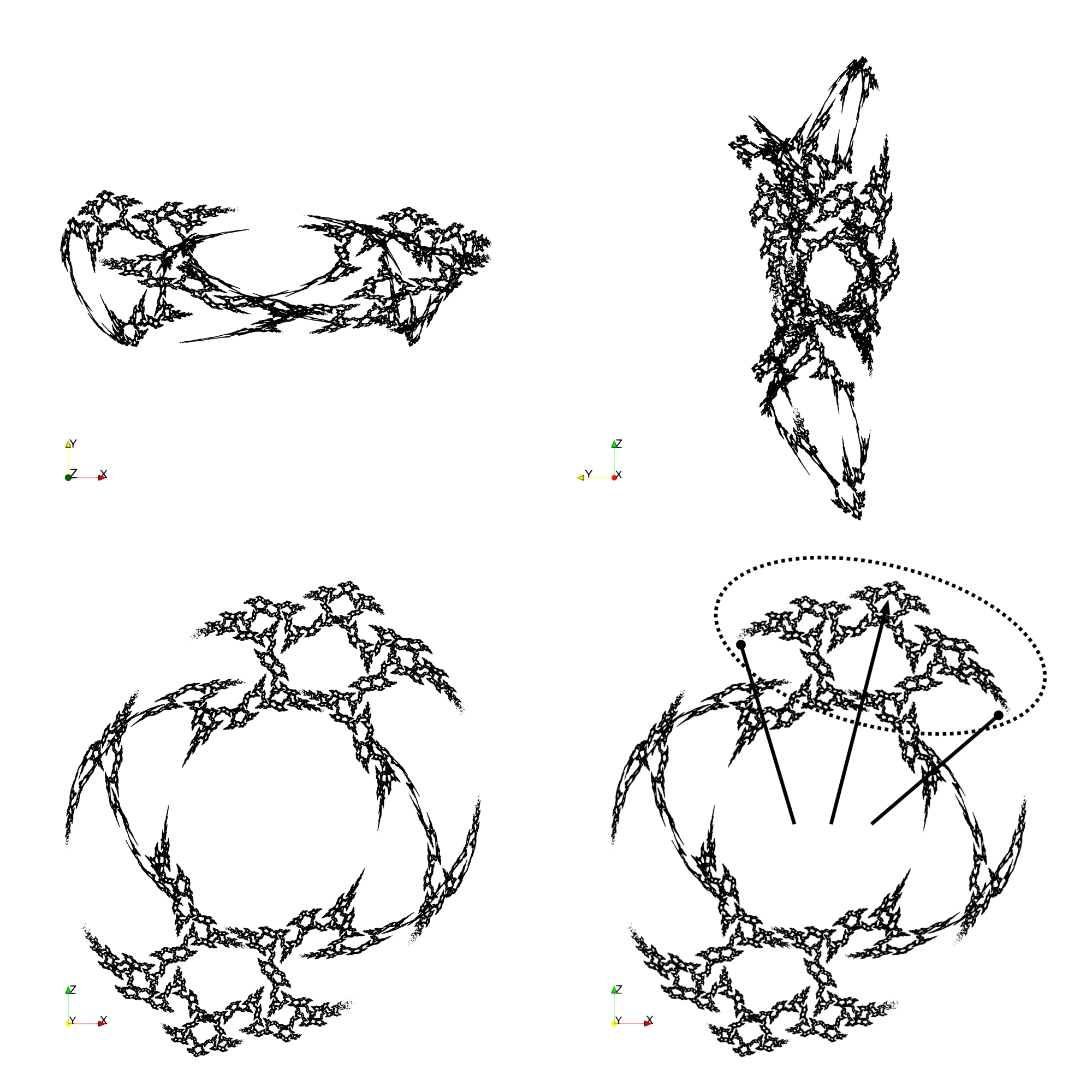}
\caption{Limit set of $\Delta(3,3,5;\theta_\infty)$.}\label{fig-2-2}
\end{figure}

\begin{figure}[ht]
\centering
\includegraphics[width=\textwidth]{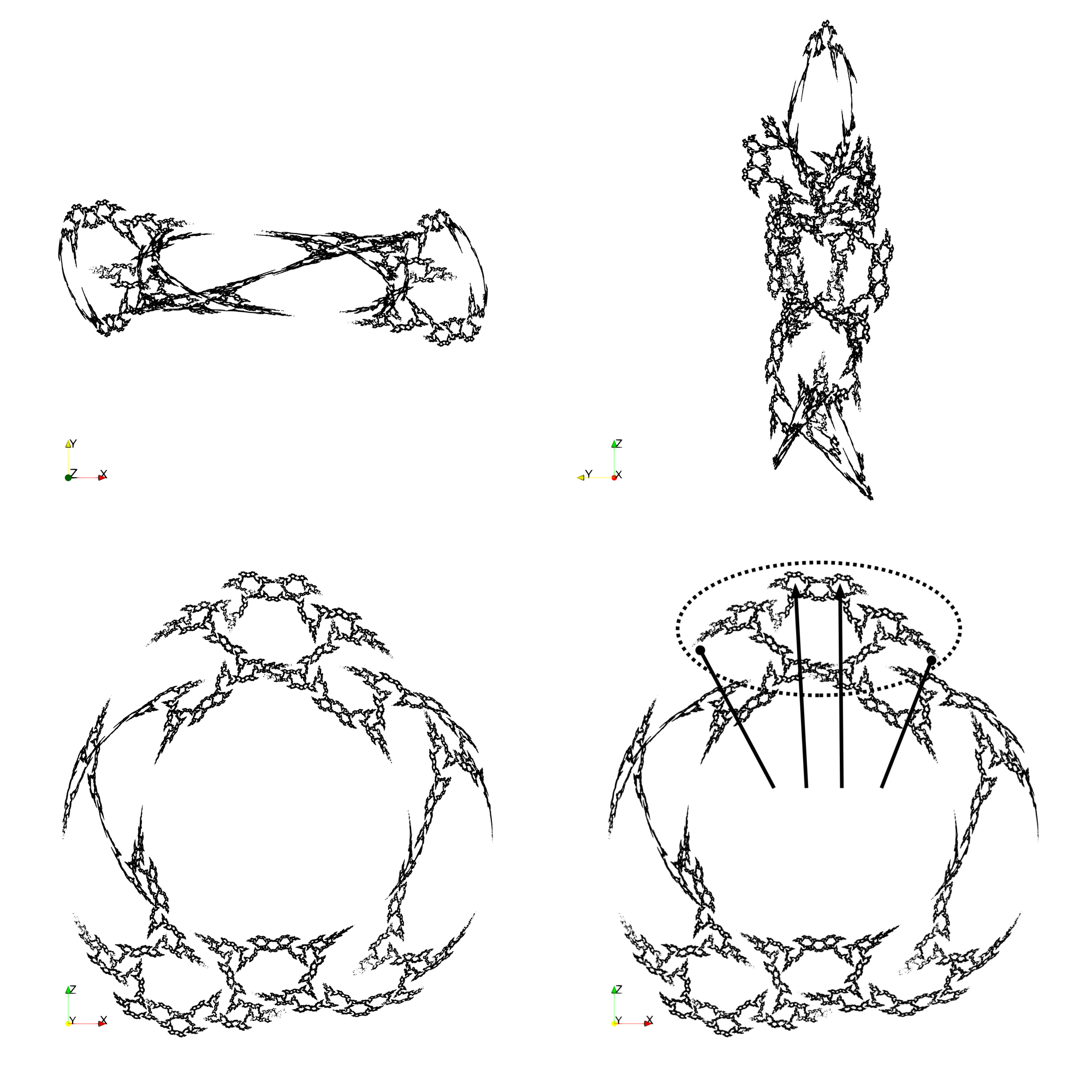}
\caption{Limit set of $\Delta(3,3,6;\theta_\infty)$.}\label{fig-2-3}
\end{figure}

\begin{figure}[ht]
\centering
\includegraphics[width=\textwidth]{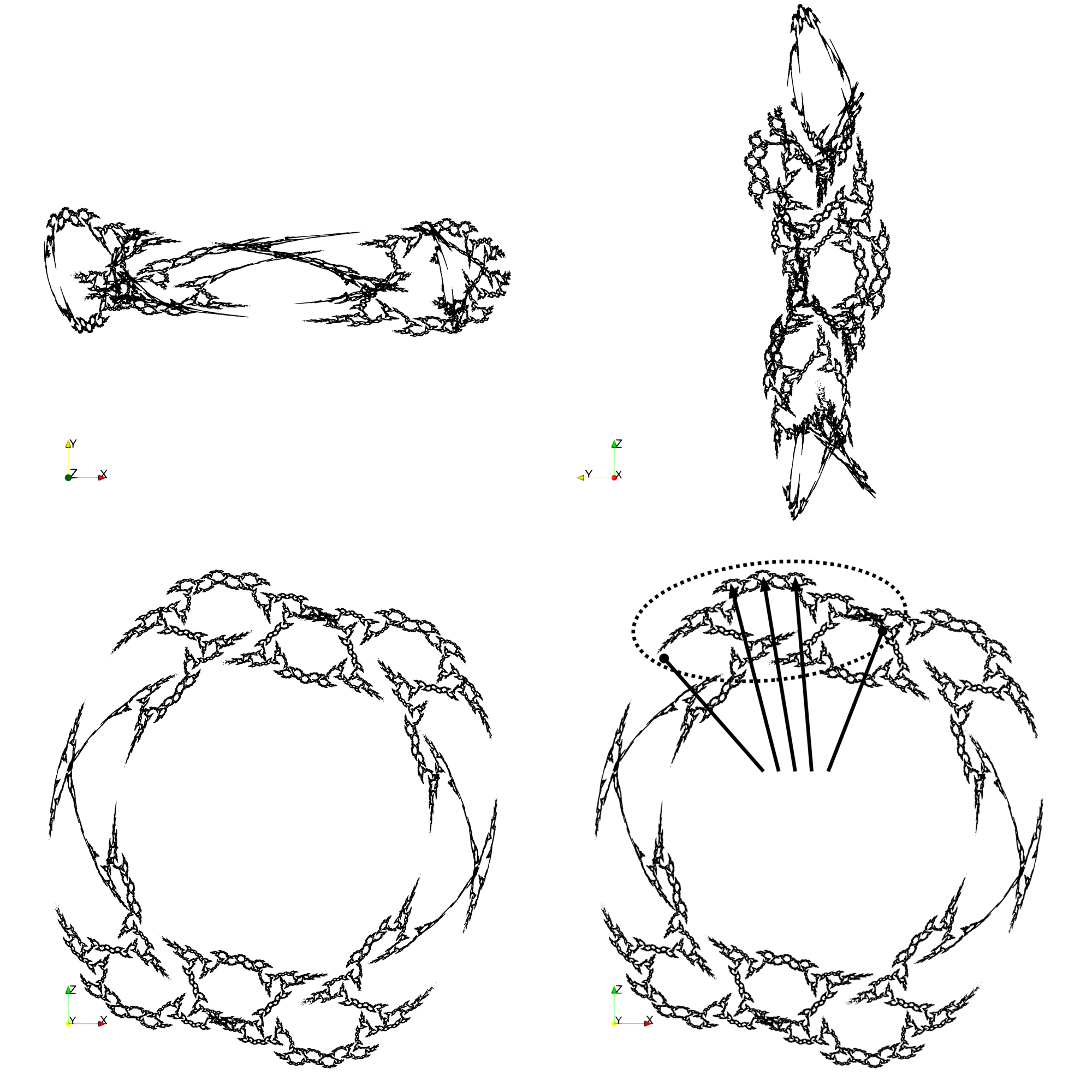}
\caption{Limit set of $\Delta(3,3,7;\theta_\infty)$.}\label{fig-2-4}
\end{figure}

\begin{figure}[ht]
\centering
\includegraphics[width=\textwidth]{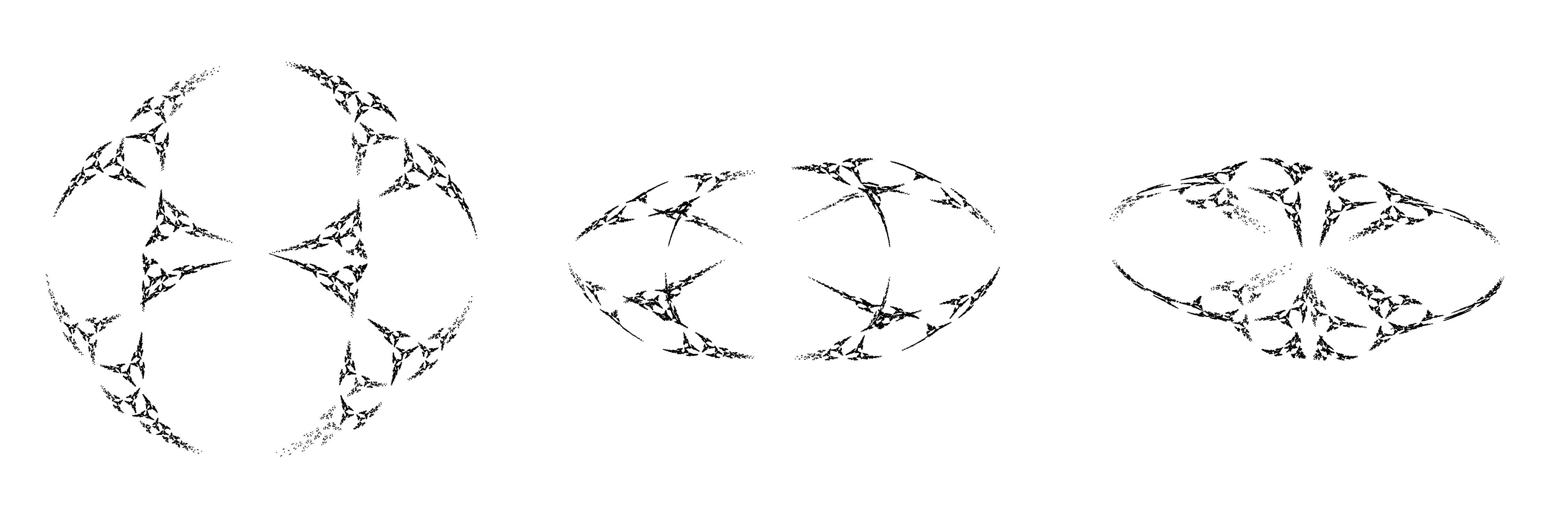}
\caption{Limit set of $\Delta(3,3,\infty;\theta_\infty)$.}\label{fig-3_3_inf}
\end{figure}

\begin{figure}[ht]
\centering
\includegraphics[width=\textwidth]{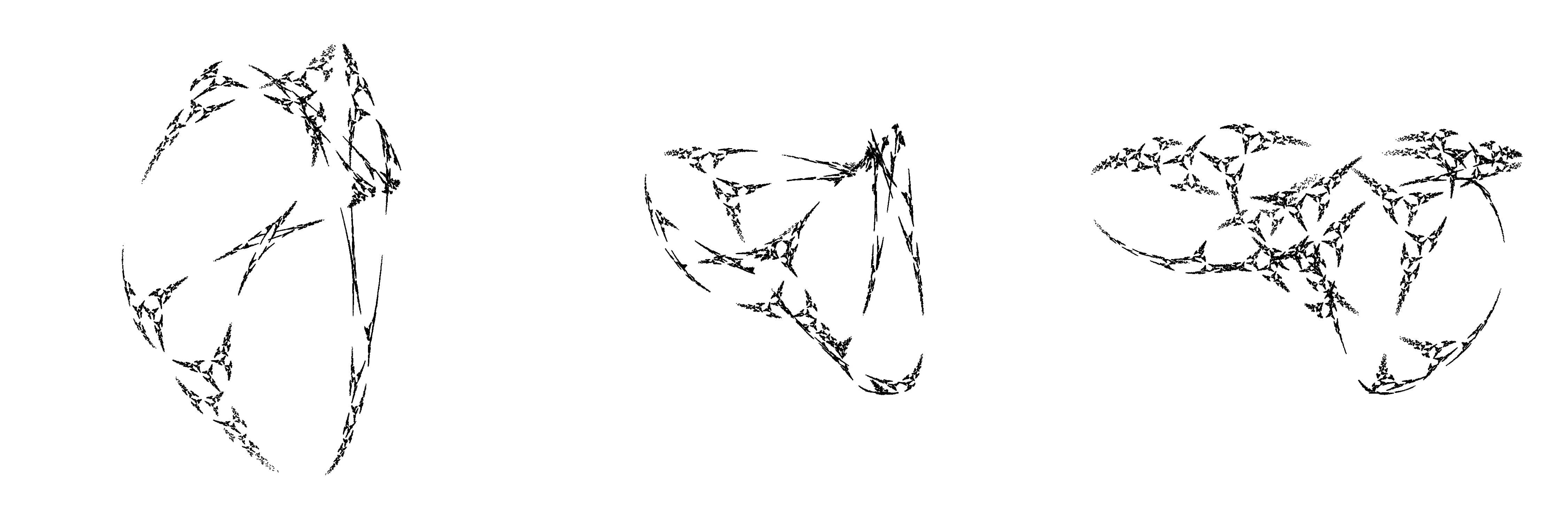}
\caption{Limit set of m035-1.}\label{fig-m035}
\end{figure}

\begin{figure}[ht]
\centering
\includegraphics[width=\textwidth]{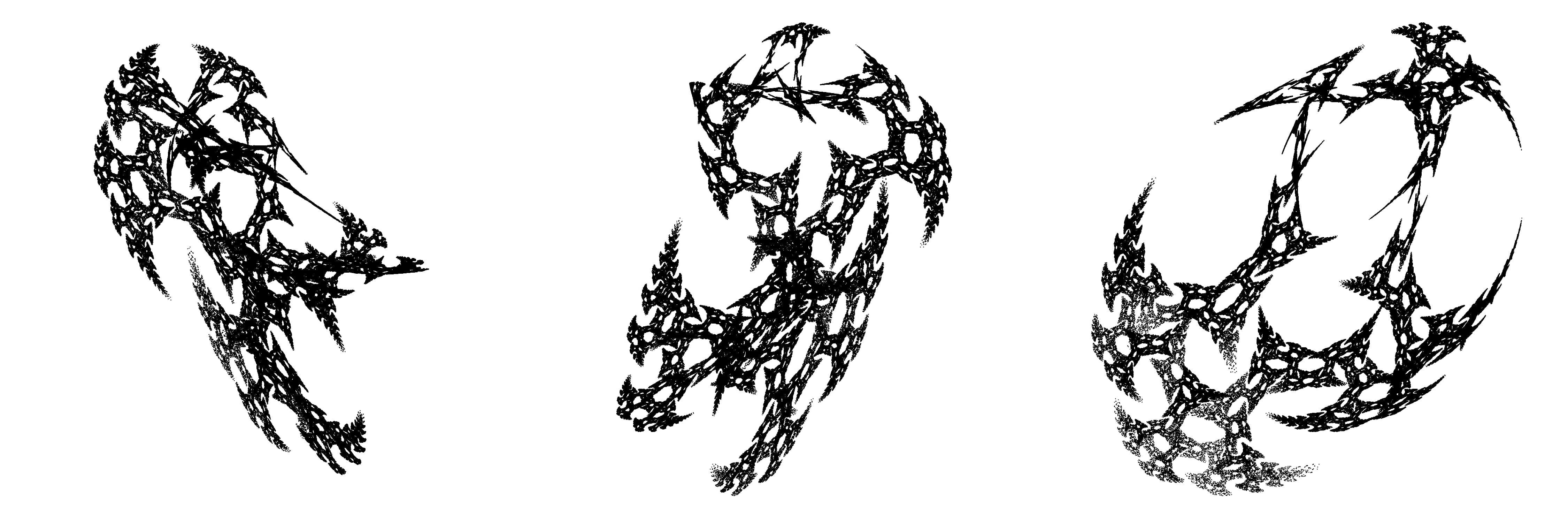}
\caption{Limit set of m045-1.}\label{fig-m045}
\end{figure}

\begin{figure}[ht]
\centering
\includegraphics[width=\textwidth]{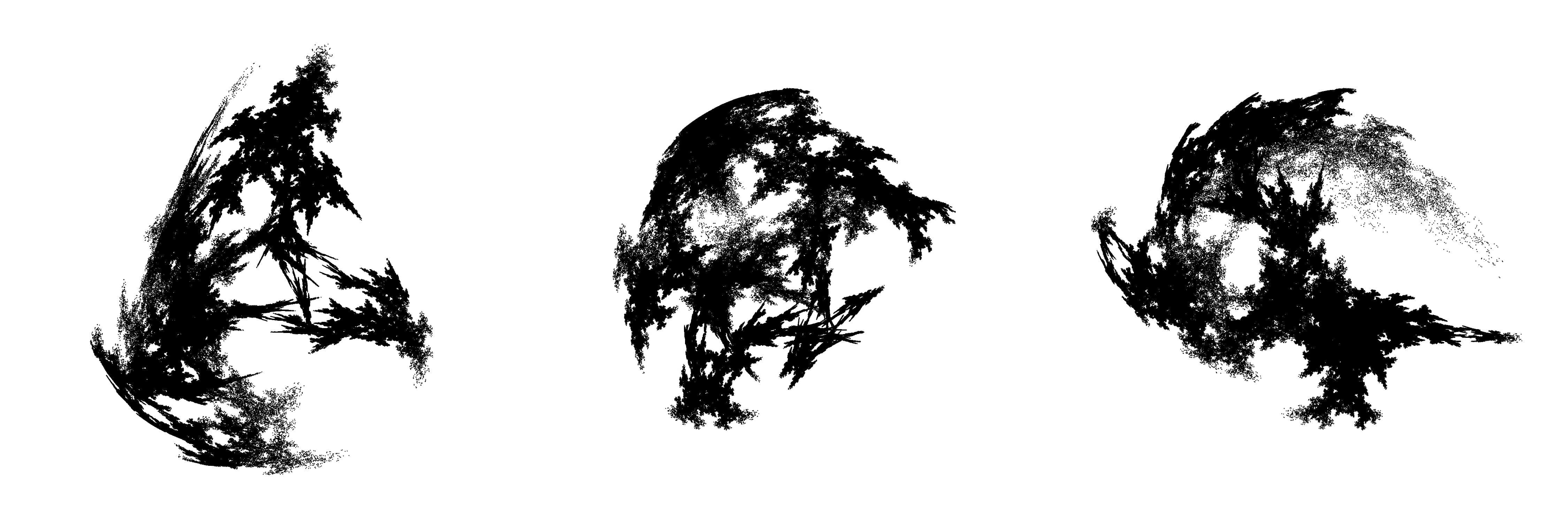}
\caption{Limit set of m023-7.}\label{fig-m023-7}
\end{figure}

\begin{figure}[ht]
\centering
\includegraphics[width=\textwidth]{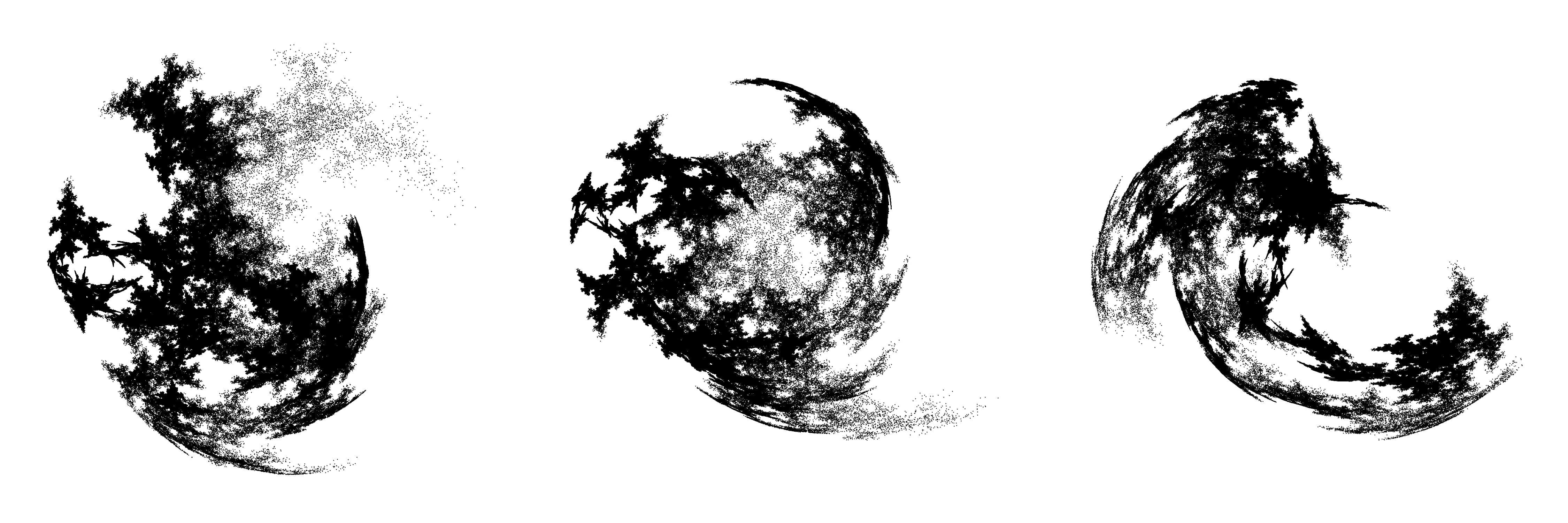}
\caption{Limit set of the triangle $\Lambda_2(3,3,4)\to\PU(2,1)$ with $[x,y]$ unipotent.}\label{fig-3_3_4-lag}
\end{figure}

\begin{python}
import SnapPy
import numpy

# Data
s = 'm009'
i,j = 0,1 # "id" in the table
#

M = SnapPy.Manifold(s)
G = M.fundamental_group()
print(G,G.peripheral_curves())

P = M.ptolemy_variety(3,'all').retrieve_solutions(prefer_rur=True)
S = [[component
      for component in per_obstruction
      if component.dimension == 0]
     for per_obstruction in P]
K = S[i][j]

def f(x): # evaluates words
    mat_x = K.evaluate_word(x,G)
    return [[z.lift() for z in y] for y in mat_x]
 
print(f('aaabaab'))
\end{python}

\printbibliography

\end{document}